\title{Small-gain theorems for nonlinear stochastic systems with inputs and outputs II: Multiplicative white noise case\thanks{This work was supported by the National Natural Science Foundation of China (NSFC) under
Grants No.11371252 and No.11501369; Research and Innovation Project of Shanghai Education Committee under Grant No.14zz120; Chen Guang Project(14CG43) of Shanghai Municipal Education Commission, Shanghai Education Development Foundation; Yangfan Program of Shanghai (14YF1409100); the Research Program of Shanghai Normal University (SK201403) and Shanghai Gaofeng Project for University Academic Program Development.}}
\author{Jifa Jiang\thanks{Department of Mathematics, Shanghai Normal
University, Shanghai 200234, PR China ({\tt jiangjf@shnu.edu.cn}).}
        \and Xiang Lv\thanks{Corresponding author. Department of Mathematics, Shanghai Normal
University, Shanghai 200234, PR China ({\tt lvxiang@shnu.edu.cn}).}}
\begin{document}

\maketitle

\begin{abstract}
This paper is a continuation of the paper \cite{JL},  which focuses on exploring the global stability of nonlinear stochastic feedback systems on the nonnegative orthant driven by multiplicative white noise and presenting a couple of small-gain results. We investigate the dynamical behavior of pull-back trajectories for stochastic control systems and prove that there exists a unique globally attracting positive random equilibrium for those systems whose output functions either possess bounded derivatives or are uniformly bounded away from zero. In the first case, we first prove the joint measurability of both the pull-back trajectories and the metric dynamical system $\theta$ with respect to the product $\sigma$-algebra $\mathscr{B}(\mathbb{R_+})\otimes\mathscr{F}_-$ and $\mathscr{B}(\mathbb{R_-})\otimes\mathscr{F}_-$, respectively, where $\mathscr{F}_-=\sigma\{\omega\mapsto W_t(\omega):t\leq0\}$ is the past $\sigma$-algebra and $W_t(\omega)$ is an $\mathbb{R}^d$-valued two-sided Wiener process, and then combine the $\mathcal{L}^1$-integrability of the tempered random variable coming from the definition of the top Lyapunov exponent and the independence between the past $\sigma$-algebra and the future $\sigma$-algebra $\mathscr{F}_+=\sigma\{\omega\mapsto W_t(\omega):t\geq0\}$ to obtain the small-gain theorem by constructing the contraction mapping on an $\mathscr{F}_-$-measurable, $\mathcal{L}^1$-integrable and complete metric input space; in the second case, the sublinearity of  output functions and the part metric play the main roles in the existence and uniqueness of globally attracting positive fixed point in the part of a normal, solid cone. Our results can be applied to well-known stochastic Goodwin negative feedback system, Othmer-Tyson positive feedback system and Griffith positive feedback system as well as other stochastic cooperative, competitive and predator-prey systems.
\end{abstract}

\begin{keywords}
stochastic control systems, small-gain theorem, random dynamical systems, random equilibrium, stochastic input-output stability, stochastic feedback systems
\end{keywords}

\begin{AMS}
93E03, 93E15, 93D25, 93D15 60H10, 37H10
\end{AMS}

\pagestyle{myheadings}
\thispagestyle{plain}

\section{Introduction}
The purpose of this paper is to consider the stability of nonlinear stochastic control systems with multiplicative white noise, where stochastic stability results can be regarded as a small-gain theorem for guaranteeing the existence, uniqueness and global attractivity of positive random equilibrium for the corresponding random dynamical systems. It is well known that small-gain theory is an important tool to investigate the behavior of many linear or nonlinear control systems.  The history of small-gain theorems can be traced  back to Zames \cite{Z} in 1966, who proved that control system admits input-output stability if the (incremental) gain-product is less than one, that is, the relations (mappings) between inputs and outputs are bounded and continuous.  Motivated by the excellent work of Zames \cite{Z}, many small-gain theorems for various feedback systems were proposed, see \cite{AA,ALS,DRW,ES,H,JM,JMW,JTP,LN,MH,NT,S,SI,ST} and references therein, which have been widely used in the stability analysis and design of many interconnected systems.

Control theory is an interdisciplinary field of engineering, biology, chemistry and mathematics, which was originally developed to provide tools for analysis of control systems.
During the past development, it has been widely applied to investigate the dynamical behavior of deterministic control systems with inputs and outputs. Regarded as feedback models, especially those from biology and ecology et al.,
they are often subjective to noise disturbances either from their surrounding environments or from their intrinsic uncertainties. Noise-perturbed systems for continuous-time are usually described by stochastic differential equations. Studying stochastic stability for stochastic control systems is very important both in theory and in practice, see \cite{DK1,DK2,DK3,PB}. Among the existing works on stochastic stability, most of them focus on the trivial solution, that is, stochastic stability for the zero solution in respective meanings, see \cite{M} and its references. However, if the zero solution, or in general a deterministic function, is not a solution of given stochastic control system, then the most suitable candidate to replace the trivial solution is a stationary solution. But to study the existence and global stability for a stationary solution in certain sense is not an easy job, there are very few works on this subject as far as we know.  Therefore, exploring stochastic stability for a nontrivial stationary solution of stochastic control system is a challenging problem.

Recently, the theory of random dynamical systems has been developed to investigate the stability of control systems involving in {\it real noise} perturbation by Marcondes de Freitas and Sontag \cite{FS1,FS2,FS3}, which provide  new insights to consider non-monotone systems. Specifically, they established a small-gain theorem to characterize interconnections and proved the existence of a globally attractive random equilibrium for random systems with inputs and outputs by iterating the constructed $``gain\ operator"$  $\mathcal{K}^{h}(u)$, this random equilibrium is usually nontrivial and only qualitatively exists. It is worth noting that their main result can not be applied to stochastic systems driven by Brownian motion with inputs and outputs. Although Marcondes de Freitas and Sontag's small-gain Theorem 4.4 in \cite{FS3} is general in form, it seems to us that their setting was based on continuous-time/discrete-time random systems. Almost all they considered are $\omega$-wised without involving in any stochastic integral, adaption et al. If we follow their streamline for stochastic control systems driven by Brownian motion, we will encounter many difficulties to overcome among which the perfection of the crude
cocycle $\varphi(t,\omega,x,u)$ for nonautonomous stochastic systems with input $u$ is the biggest one. However, the pivotal role of the small-gain condition in \cite{FS3} is to guarantee the existence, uniqueness of fixed point $u$  for $\mathcal{K}^{h}$ and global attractivity for $\mathcal{K}(u)$.  The present authors  \cite{JL} considered the global stability of nonlinear stochastic control systems driven by additive white noise. We came quickly and directly to the problem with just the necessary minimum of technical preparations. Precisely,  we first defined the {\it ``input-to-state characteristic operator"} $\mathcal{K}(u)$ of the system in a suitably chosen input space via the backward It\^{o} integral, and then for a given output function $h$, defined the $``gain\ operator"$  $\mathcal{K}^{h}(u)$ as the composition of the output function $h$ and the input-to-state characteristic operator $\mathcal{K}(u)$ on the input space. Suppose that the output function is either order-preserving or anti-order-preserving in the usual vector order and the global Lipschitz constant of the output function is less than the absolute of the negative principal eigenvalue of linear matrix. Then we proved a small-gain theorem via the Banach fixed point theorem, which is much more easily accessible.

 It is more realistic to restrain the state space on the nonnegative orthant when we consider the feedback problems originating from biology, ecology and biochemistry et al. For these problems, we have to consider stochastic systems driven by multiplicative white noise, which usually preserves the invariance for solutions on the nonnegative orthant. This paper is a continuation of the paper \cite{JL} and presents a couple of small-gain results for nonlinear stochastic control systems on the nonnegative orthant driven by multiplicative white noise. Using the ideas in \cite{JL}, we first define the input-to-state characteristic operator $\mathcal{K}(u)$ via the stochastic system with inputs (see (\ref{input})) and the backward It\^{o} integral and then define the gain operator $\mathcal{K}^{h}(u):=h\circ \mathcal{K}(u)$. Our task is to construct a suitable complete metric (measurable) input  space such that $\mathcal{K}^{h}(\cdot)$ is contractive on it. Throughout of this paper, we assume that the matrix $A$ in the stochastic system with inputs is cooperative and the fundamental matrix of the corresponding homogeneous system for the stochastic system with inputs has a negative top Lyapunov exponent and that the output function is either monotone or anti-monotone in usual vector field order. Two small-gain theorems will be verified  in the two cases that output functions are either uniformly bounded away from zero or possess bounded derivatives. Noting that for the case of multiplicative white noise, the fundamental matrix $\Phi(t,\omega)$ depends on $\omega\in\Omega$. This yields that we can not give uniform estimate of $\Phi(t,\omega)$ for all $\omega\in\Omega$ and the problem will become more difficult to study than that in additive white noise \cite{JL}. To overcome this difficulty, in the case that derivatives of output functions are bounded, we first establish the subtle joint measurability of the metric dynamical system $\theta$ and pull-back trajectories with respect to the product $\sigma$-algebra $\mathscr{B}(\mathbb{R_-})\otimes\mathscr{F}_-$ and $\mathscr{B}(\mathbb{R_+})\otimes\mathscr{F}_-$, respectively, where $\mathscr{F}_-=\sigma\{\omega\mapsto W_t(\omega):t\leq0\}$, $W_t(\omega)$ is an $\mathbb{R}^d$-valued two-sided Brownian motion.  These measurabilities help us to well define the input-to-state characteristic operator $\mathcal{K}(u)$ and the gain operator $\mathcal{K}^h(u)$ on a bounded and $\mathscr{F}_-$ measurable input space. Then
making use of the $\mathcal{L}^1$-integrability of the tempered random variable in the definition of the top Lyapunov exponent and the independence between this tempered random variable and  the past $\sigma$-algebra $\mathscr{F}_-$, we prove  that the gain operator $\mathcal{K}^{h}(\cdot)$ is contractive on this input space and carry out the ideas in  \cite{JL} to achieve in proving a small-gain theorem.  In the case that output functions are uniformly bounded away from zero, the main tool presented here is to take advantage of the sublinearity of output functions and the part metric for guaranteeing the existence and uniqueness of a globally attracting fixed point in the part of a normal, solid cone. Our results can be successfully applied to well-known stochastic Goodwin negative feedback system, Othmer-Tyson positive feedback system and Griffith positive feedback system as well as other stochastic cooperative, competitive and predator-prey systems to get that there exists a unique globally stable positive random equilibrium on the nonnegative orthant.

This paper is organized as follows. Section 2 contains the considered problem, some preliminary definitions, notations and the definition for the input-to-state characteristic map. Section 3 specifies the measurability of pull-back trajectories and the metric dynamical system $\theta$ and the dynamical behavior of stochastic flows, and shows some relative order-preserving results. Section 4 presents two stochastic small-gain theorems, which are  applied to a series of examples. Section 5 ends this paper with some concluding remarks and discussions.

\section{Problem and preliminaries}
In this section, we will investigate a stochastic biochemical model  consisting of $d$ interacting components, which may be more realistic for describing and simulating the dynamical behavior of biochemical networks under fluctuations of intrinsic and extrinsic noise. Let $X_i$ represent the $i$-th variable of biochemical reactions (protein concentrations or levels of gene expression), which can be modelled by the following nonlinear stochastic system with multiplicative white noise
\begin{equation}
dX_t=[AX_t+h(X_t)]dt+\sum_{k=1}^{d}\sigma_kX_tdW_t^k,\label{problem}
\end{equation}
where $A=(a_{ij})_{d\times d}$ and $\sigma_k=(\sigma_k^{ij})_{d\times d}$ are ($d\times d$)-dimensional matrices, $k=1,\ldots,d$, $h:\mathbb{R}^d_+\rightarrow\mathbb{R}^d_+$ and
$W_t(\omega)=\left(W_t^1(\omega),\ldots,W_t^d(\omega)\right)$ is an $\mathbb{R}^d$-valued two-sided Wiener process on the probability space $(\Omega,\mathscr{F},\mathbb{P})$, where $\mathscr{F}$ is the Borel $\sigma$-algebra of
$\Omega=C_0(\mathbb{R},\mathbb{R}^d)=\{\omega=(\omega_1,\omega_2,\ldots,\omega_d)\in C(\mathbb{R},\mathbb{R}^d),\ \omega(0)=0\}$
induced by the compact-open topology, which is generated by the following metric
\[\varrho(\omega,\omega^\ast):=\sum_{n=1}^\infty\frac{1}{2^n}\frac{\varrho_n(\omega,\omega^\ast)}{1+\varrho_n(\omega,\omega^\ast)},\quad
\varrho_n(\omega,\omega^\ast)=\max_{t\in[-n,n]}|\omega(t)-\omega^\ast(t)|,\]
and $\mathbb{P}$ is the corresponding Wiener measure. Furthermore, we will be concerned with the dynamical behavior of stochastic differential equations on the nonnegative orthant. For this purpose, we assume that $\sigma_k$, $k=1,\ldots,d$, has the following form throughout this paper
\begin{equation}
\sigma_k=\left[\begin{array}{ccc} \sigma_k^{1}& &\\
 &\ddots& \\
& & \sigma_k^d\end{array}\right],\qquad \sigma_k^i\in\mathbb{R},\ k,i=1,\ldots,d.
\label{eq1}
\end{equation}

For the convenience of readers, we will give some definitions and notations of random dynamical systems for later use, see \cite{A,C} for more details.

In this work, let $X$ be a Polish space endowed with the Borel $\sigma$-algebra $\mathscr{B}(X)$, i.e., a separable complete metric space, and $(\Omega,\mathscr{F},\mathbb{P})$ a probability space.

\begin{definition}
$\theta\equiv(\Omega,\mathscr{F},\mathbb{P},\{\theta_t,t\in\mathbb{R}\})$ is called a metric dynamical system if
\begin{enumerate}[{\rm (i)}]
\item $\theta:\mathbb{R}\times\Omega\mapsto\Omega$ is $(\mathscr{B}(\mathbb{R})\otimes\mathscr{F}, \mathscr{F})$-measurable;
\item $\theta_0={\rm id}$ is the identity on $\Omega$ and $\theta_t\circ\theta_s=\theta_{t+s}$ for all $t,s\in\mathbb{R}$;
\item $\theta_t\mathbb{P}=\mathbb{P}$ for all $t\in\mathbb{R}$, i.e.,
$\theta_t$ preserves the probability measure $\mathbb{P}$ for all $t\in\mathbb{R}$.
\end{enumerate}
\end{definition}

\begin{definition}
A random dynamical system (RDS) on the Polish space $X$ consists of two elements: a metric dynamical
system $\theta\equiv(\Omega,\mathscr{F},\mathbb{P},\{\theta_t,t\in\mathbb{R}\})$ and the mapping
\[\varphi:\mathbb{R}_+\times\Omega\times X\mapsto X, \quad (t,\omega,x)\mapsto\varphi(t,\omega,x),\]
which is $(\mathscr{B}(\mathbb{R}_+)\otimes\mathscr{F}\otimes\mathscr{B}(X),
\mathscr{B}(X))$-measurable and satisfies
\begin{enumerate}[{\rm (i)}]
\item$\varphi(t,\omega,\cdot): X \to X$ is continuous for all $t\in
\mathbb{R}_+$ and $\omega\in\Omega$;
\item the mappings $\varphi(t,\omega):=\varphi(t,\omega,\cdot)$ satisfy the cocycle (over $\theta$) property:
\[\varphi(0,\omega)={\rm id},\quad \varphi(t+s,\omega)=\varphi(t,\theta_s\omega)\circ\varphi(s,\omega)\]
for all $t,s\in\mathbb{R}_+$ and $\omega\in\Omega$.
\end{enumerate}
\end{definition}

\begin{definition}
The multifunction $D:\Omega\to
2^X\backslash\{\varnothing\}$ is said to be a random set if the mapping
$\omega\to {\rm dist}_X(x,D(\omega))$ is measurable for any $x\in X$, where ${\rm dist}_X(x,B)$ means the
distance in $X$ between the point $x$ and the set $B\subset X$. If $D(\omega)$ is closed (resp. compact) in $X$ for each $\omega\in\Omega$, the mapping $\omega\to D(\omega)$ is called a random closed (resp. compact) set.
\end{definition}

Motivated by the work of \cite{FS3,JL}, the above stochastic model can be rewritten as a stochastic system with inputs
\begin{equation}
dX_t=[AX_t+u(t)]dt+\sum_{k=1}^{d}\sigma_kX_tdW_t^k
\label{input}
\end{equation}
together with outputs
\[u(t)=h(X_t).\]
From this viewpoint, we can regard the nonlinear feedback function $u(t)=h(X_t)$ as a known stochastic process, which results in that the stochastic system (\ref{problem}) will become a linear non-homogeneous stochastic differential equations.

Let us first consider the corresponding linear homogeneous stochastic It${\rm  \hat{o}}$ type differential equations
\begin{equation}
dX_t=AX_tdt+\sum_{k=1}^{d}\sigma_kX_tdW_t^k,
\label{eq2}
\end{equation}
which is equivalent to the following system of Stratonovich stochastic differential equations
\begin{equation}
dX_t=(A-\frac12C)X_tdt+\sum_{k=1}^{d}\sigma_kX_t\circ dW_t^k,
\label{eq3}
\end{equation}
where we write $C$ in the form
\begin{equation}
C=\left[\begin{array}{ccc} \sum\limits_{k=1}^{d}(\sigma_k^1)^2& &\\
 &\ddots& \\
& & \sum\limits_{k=1}^{d}(\sigma_k^d)^2\end{array}\right].\label{eq4}
\end{equation}

In order to make use of the technique of monotone systems, it is necessary to make the following assumption on $A$
\begin{enumerate}[({A})]
\item
$A$ is {\it cooperative}, i.e.,
$a_{ij}\geq0$ for all $i,j\in\{1,\ldots,d\}$ and $i\neq j$.
\end{enumerate}

We will denote by $\Phi(t)=(\Phi_1(t),\ldots,\Phi_d(t))=(\Phi_{ij}(t))_{d\times d}$ the fundamental matrix of equations (\ref{eq2}), where $\Phi_j(t)=(\Phi_{1j}(t),\ldots,\Phi_{dj}(t))^T$ is the solution of equations (\ref{eq2}) with initial value $x(0)=e_j$, $j=1,\ldots,d$.
By the classical existence and uniqueness of solutions for stochastic differential equations and the theory of monotone random dynamical systems, it is clear that (\ref{eq2}), i.e., (\ref{eq3}) generates a linear order-preserving random dynamical system $(\theta,\Phi)$ in $\mathbb{R}^d_+$, see \cite[Proposition 6.2.2, p. 186]{C}, where $\theta$ is the time shift on $\Omega$, i.e.,
\[\theta_t\omega(\cdot):=\omega(t+\cdot)-\omega(t),\quad t\in\mathbb{R}.\]
That is, $\Phi$ satisfies the cocycle property: $\Phi(t+s,\omega)=\Phi(t,\theta_s\omega)\circ\Phi(s,\omega)$
for all $t,s\in\mathbb{R}_+$, $\omega\in\Omega$, and $\Phi(t,\omega)x\geq_{\mathbb{R}^d_+}\Phi(t,\omega)y$, for all $x,y\in\mathbb{R}^d_+$ such that $x\geq_{\mathbb{R}^d_+}y$, where $x\geq_{\mathbb{R}^d_+}y$ means that $x-y\in\mathbb{R}^d_+$. Furthermore, the following assumption on $(\theta,\Phi)$ will be needed in what follows
\begin{enumerate}[({L})]
\item
The top Lyapunov exponent for the linear RDS $(\theta,\Phi)$ is a negative real number, i.e., there exist a constant $\lambda>0$ and a
tempered random variable $R(\omega)>0$ such that
\begin{equation}
\|\Phi(t,\omega)\|:=\max\{|\Phi_{ij}(t,\omega)|:i,j=1,\ldots,d\}\leq R(\omega)e^{-\lambda t}
\label{eq5}
\end{equation}
holds for all $t\geq0$, $\omega\in\Omega$.
\end{enumerate}
Here, a random variable $R(\omega)>0$ is called tempered if
\[\sup_{t\in\mathbb{R}}\left\{e^{-\gamma|t|}\left|R(\theta_t\omega)\right|_2\right\}<\infty \quad {\rm for\ all}\ \omega\in\Omega  \ {\rm and}\ \gamma>0,\]
where $|x|_2:=(\sum_{i=1}^d|x_i|^2)^{\frac12}$, $x\in\mathbb{R}^d$. Throughout this paper, we will use the norm $|x|:= \max\{|x_i|: i=1,\ldots,d\}$, $x\in\mathbb{R}^d$ and $\|\Phi\|_2:=(\sum_{i,j=1}^d|\Phi_{ij}|^2)^{\frac12}$, $\Phi\in\mathbb{R}^{d\times d}$.

In the remainder of this section, we are concerned with the existence and uniqueness of solutions for (\ref{problem}) and its pull-back trajectories, we shall make the following assumption on $h$, which is abstracted from the Othmer-Tyson positive feedback model \cite{TO} and the Goodwin negative feedback model \cite{G}.
\begin{enumerate}[({H}$_1$)]
\item
$h\in C^1(\mathbb{R}^d_+,\mathbb{R}^d_+\setminus\{0\})$ and is bounded in $\mathbb{R}^d_+$. Moreover, we assume that $h$ is monotone, i.e.,
    \[x_1\leq_{\mathbb{R}^d_+}x_2\quad\Rightarrow\quad h(x_1)\leq_{\mathbb{R}^d_+}h(x_2),\qquad \forall x_1, x_2\in\mathbb{R}^d_+\]
or anti-monotone, i.e.,
\[x_1\leq_{\mathbb{R}^d_+}x_2\quad\Rightarrow\quad h(x_1)\geq_{\mathbb{R}^d_+}h(x_2),\qquad \forall x_1, x_2\in\mathbb{R}^d_+.\]
\end{enumerate}
By (H$_1$), it is easy to check that (\ref{problem}) satisfies the conditions of local Lipschitz and linear growth (since $h$ is bounded in $\mathbb{R}^d_+$) in $\mathbb{R}^d_+$. Motivated by the proof of Proposition 6.2.1 in \cite{C}, let $\tilde{h}$ be an extension of $h$ from $\mathbb{R}^d_+$ to $\mathbb{R}^d$ such that $\tilde{h}$ satisfies the conditions of local Lipschitz and linear growth in $\mathbb{R}^d$, we thus have the existence and uniqueness of global solutions for
\[dX_t=[AX_t+\tilde{h}(X_t)]dt+\sum_{k=1}^{d}\sigma_kX_tdW_t^k,\]
see \cite{M,O}, which is equivalent to the Stratonovich interpretation of stochastic differential equations
\[dX_t=[(A-\frac12C)X_t+\tilde{h}(X_t)]dt+\sum_{k=1}^{d}\sigma_kX_t\circ dW_t^k,\]
where $C$ is defined in (\ref{eq4}), and generates an RDS in $\mathbb{R}^d$, see \cite[Chapter 2]{A,C}. In the same manner of Proposition 6.2.1 in \cite{C}, we can see that there exists a unique (indistinguished) RDS $(\theta,\varphi)$ generated by (\ref{problem}) such that the set  $\mathbb{R}^d_+$ is forward invariant, i.e., $\varphi(t,\omega)\mathbb{R}^d_+\subset\mathbb{R}^d_+$ for all $t\in\mathbb{R}_+$, $\omega\in\Omega$ and $\varphi(t,\omega)x=x(t,\omega,x)$ is the unique solution of equations (\ref{problem}) for each initial value $x(0)=x\in\mathbb{R}^d_+$.

Combining the variation-of-constants formula \cite[Chapter 3, Theorem 3.1]{M} and the cocycle property of $\Phi$, it follows that
\begin{eqnarray}\label{eq6}
\varphi(t,\omega)x &=& \Phi(t,\omega)x+\Phi(t,\omega)\int_0^t\Phi^{-1}(s,\omega)h(\varphi(s,\omega)x)ds\nonumber\\
&=& \Phi(t,\omega)x+\int_0^t\Phi(t-s,\theta_s\omega)h(\varphi(s,\omega)x)ds, \quad  t\geq0,\ \omega\in\Omega.
\end{eqnarray}

By the definition of $\theta$, a similar analysis as in \cite{JL} shows that the pull-back trajectories of $(\theta,\varphi)$ as follows
\begin{eqnarray}\label{eq7}
\varphi(t,\theta_{-t}\omega)x &=& \Phi(t,\theta_{-t}\omega)x+\int_0^t\Phi(t-s,\theta_{s-t}\omega)h(\varphi(s,\theta_{-t}\omega)x)ds\nonumber\\
&=& \Phi(t,\theta_{-t}\omega)x+\int_{-t}^0\Phi(-s,\theta_s\omega)h(\varphi(t+s,\theta_{-t}\omega)x)ds,\ t\geq0,\ \omega\in\Omega.
\end{eqnarray}

Regarding the feedback function $h$ as an input term, we define the {\it input-to-state characteristic} map $\mathcal{K}$ associated with given inputs in $\mathbb{R}^d_+$ as follows
\begin{equation}\label{eq8}
[\mathcal{K}(u)](\omega)=\int_{-\infty}^0\Phi(-s,\theta_s\omega)u(\theta_s\omega)ds,\quad\omega\in\Omega,
\end{equation}
where $u$ is an $\mathbb{R}^d_+$-valued and tempered random variable with respect to $\theta$.

\textsc{{\it Remark}} 1. Since the top Lyapunov exponent of $\Phi$ is negative, it is evident that $\mathcal{K}$ is well defined. We first observe the fact that
$\|\Phi(t,\omega)\|_2\leq d\|\Phi(t,\omega)\|\leq dR(\omega){\rm e}^{-\lambda t}$, $\lambda>0$. According to the above definition, we have
\begin{eqnarray}
\left|\int_{-\infty}^0\Phi(-s,\theta_s\omega)u(\theta_s\omega)ds\right|_2
&\leq&\int_{-\infty}^0\left|\Phi(-s,\theta_s\omega)u(\theta_s\omega)\right|_2ds\nonumber\\
&\leq& d\int_{-\infty}^0R(\theta_s\omega){\rm e}^{-\lambda|s|}|u(\theta_s\omega)|_2ds\nonumber\\
&\leq&d\sup_{t\in\mathbb{R}}\left\{{\rm e}^{-\frac\lambda4|t|}\left|u(\theta_t\omega)\right|_2\right\}\sup_{t\in\mathbb{R}}\left\{{\rm e}^{-\frac\lambda4|t|}R(\theta_t\omega)\right\}\nonumber\\
&\cdot&\int_{-\infty}^0e^{-\frac\lambda2|s|}ds\nonumber\\
&<&\infty,\qquad\omega\in\Omega,
\nonumber\end{eqnarray}
which together with the Lebesgue's monotone convergence theorem \cite{Co} implies that
\[\lim\limits_{t\rightarrow\infty}\int_{-t}^0\Phi(-s,\theta_s\omega)u(\theta_s\omega)ds\]
exists for all $\omega\in\Omega$ and $\mathbb{R}^d_+$-valued $u$ by the order-preserving property of $\Phi$.
Furthermore, due to the boundedness of $h$ and (L), we similarly have that
$\{\varphi(t,\theta_{-t}\omega)x: t\geq0\}$ is a bounded set for all $\omega\in\Omega$ and $x\in\mathbb{R}^d_+$, which plays an important role in the subsequent sections.

\section{Measurability and behaviour of RDS generated by SDEs}
In this section, we will divide the proof of our main results into a sequence of lemmas and establish some propositions related to the measurability of the metric dynamical system $\theta$ with respect to the product $\sigma$-algebra $\mathscr{B}(\mathbb{R_-})\otimes\mathscr{F}_-$ and the dynamical behaviour of pull-back trajectories. In what follows, we may repeat some known results without proof for making our exposition self-contained. We start with definitions of future and past $\sigma$-algebras, which can be found in \cite{C,Cr}.

\begin{proposition}\label{pro1}
Define the future and the past $\sigma$-algebras for $(\theta,\Phi)$ and $(\theta,\varphi)$ as follows
\[\mathscr{F}_+^1=\sigma\{\omega\mapsto\Phi(\tau,\theta_t\omega)x:x\in\mathbb{R}^d_+,\ t,\tau\geq0\},\]
\[\mathscr{F}_-^1=\sigma\{\omega\mapsto\Phi(\tau,\theta_{-t}\omega)x:x\in\mathbb{R}^d_+,\ 0\leq\tau\leq t\},\]
\[\mathscr{F}_+^2=\sigma\{\omega\mapsto\varphi(\tau,\theta_t\omega)x:x\in\mathbb{R}^d_+,\ t,\tau\geq0\},\]
\[\mathscr{F}_-^2=\sigma\{\omega\mapsto\varphi(\tau,\theta_{-t}\omega)x:x\in\mathbb{R}^d_+,\ 0\leq\tau\leq t\}.\]
Then, we have
\begin{equation}\label{eq9}\mathscr{F}_+^1\subset\mathscr{F}_+,\qquad \mathscr{F}_-^1\subset\mathscr{F}_-,
\end{equation}
and
\begin{equation}\label{eq10}\mathscr{F}_+^2\subset\mathscr{F}_+,\qquad \mathscr{F}_-^2\subset\mathscr{F}_-.
\end{equation}
Here, $\mathscr{F}_+$ and $\mathscr{F}_-$ are defined by
\[\mathscr{F}_+=\sigma\{\omega\mapsto W_t(\omega): t\geq0\}\quad and \quad\mathscr{F}_-=\sigma\{\omega\mapsto W_t(\omega):t\leq0\}.\]
\end{proposition}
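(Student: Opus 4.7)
The plan is to reduce everything to the fact that the solution of an SDE driven by $W$ on a time interval $[0,\tau]$ is measurable with respect to the $\sigma$-algebra generated by the increments of $W$ on that interval, and then to translate this via the shift $\theta$. The key identity I will use is
\[
W_u(\theta_s\omega)=\omega(s+u)-\omega(s)=W_{s+u}(\omega)-W_s(\omega),\qquad s\in\mathbb R,\ u\in\mathbb R,
\]
which follows directly from the definition $\theta_s\omega(\cdot)=\omega(s+\cdot)-\omega(s)$.

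First I would establish the following crude cocycle measurability: for every fixed $\tau\ge0$ and $x\in\mathbb R^d_+$, the random variable $\Phi(\tau,\cdot)x$ is measurable with respect to $\mathscr G_\tau:=\sigma\{W_u:0\le u\le\tau\}$, and similarly $\varphi(\tau,\cdot)x$. This is a standard consequence of the construction of the strong solution of \eqref{eq2} (resp.~\eqref{problem}) as an $\mathrm L^2$-limit of Picard iterates: each iterate is a continuous functional of $\{W_u:0\le u\le\tau\}$, and a limit (in probability, passing to a subsequence almost surely) of $\mathscr G_\tau$-measurable maps is $\mathscr G_\tau$-measurable after completion. Composing with the shift and using the displayed identity then gives
\[
\omega\mapsto\Phi(\tau,\theta_s\omega)x\ \text{is}\ \sigma\{W_{s+u}-W_s:0\le u\le\tau\}\text{-measurable},
\]
and the same for $\varphi$.

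Next I would specialize the value of $s$ to the two cases. For the future generators, take $s=t\ge0$ and $\tau\ge0$: then $W_{t+u}-W_t$ is $\sigma\{W_r:r\ge0\}$-measurable (since $W_0=0$ and both endpoints are nonnegative), hence $\Phi(\tau,\theta_t\omega)x$ and $\varphi(\tau,\theta_t\omega)x$ lie in $\mathscr F_+$; taking the $\sigma$-algebra generated by all such random variables yields $\mathscr F_+^1\subset\mathscr F_+$ and $\mathscr F_+^2\subset\mathscr F_+$. For the past generators, take $s=-t$ with $0\le\tau\le t$: then $W_{-t+u}-W_{-t}$ involves only values of $W$ at times in $[-t,-t+\tau]\subset(-\infty,0]$, so it is $\mathscr F_-$-measurable; this gives $\mathscr F_-^1\subset\mathscr F_-$ and $\mathscr F_-^2\subset\mathscr F_-$.

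The only genuine obstacle is the first step, and it is a technical rather than a conceptual one: one must choose the ``good'' version of the SDE solution so that the pointwise-defined cocycle $\Phi(\tau,\omega)x$ (resp.~$\varphi(\tau,\omega)x$) agrees with the a.s.~Picard limit on a set of full measure; the inclusions are then asserted in the sense of $\mathbb P$-completed $\sigma$-algebras, which is the convention implicit in the random dynamical systems setup of \cite{A,C}. Once that choice is made, the chain of inclusions above is purely formal.
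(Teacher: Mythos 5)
Your argument is correct and follows essentially the same route as the paper: establish that $\Phi(\tau,\cdot)x$ (resp.\ $\varphi(\tau,\cdot)x$) is adapted to $\sigma\{W_u:0\leq u\leq\tau\}$, then pull back through $\theta_t$ (resp.\ $\theta_{-t}$) using $W_u(\theta_s\omega)=W_{s+u}(\omega)-W_s(\omega)$ and note that the resulting increments lie in $\mathscr{F}_+$ (resp.\ $\mathscr{F}_-$, thanks to $0\leq\tau\leq t$). The paper simply cites standard SDE theory for the adaptedness step where you invoke Picard iteration and the completion caveat, but the substance is identical.
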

\begin{proof} We only give the proof of (\ref{eq9}), (\ref{eq10}) can be obtained analogously. By the theory of stochastic differential equations, it is clear that $\Phi(t,\omega)x$ is adapted to the filtration $\mathscr{F}_0^t=\sigma\{\omega\mapsto W_s(\omega): 0\leq s\leq t\}$, $t\geq0$, $x\in\mathbb{R}^d_+$. Consequently, for fixed $x\in\mathbb{R}^d_+$, $t,\tau\geq0$, it follows that
\begin{eqnarray}\sigma\{\omega\mapsto\Phi(\tau,\theta_t\omega)x\}
&\subset&\theta_t^{-1}\mathscr{F}_0^\tau\nonumber\\
&=&\sigma\{\omega\mapsto W_s(\theta_t\omega): 0\leq s\leq \tau\}\nonumber\\
&=&\sigma\{\omega\mapsto W_{s+t}(\omega)-W_t(\omega): 0\leq s\leq \tau\}\nonumber\\
&\subset&\mathscr{F}_+,\nonumber
\end{eqnarray}
which implies that $\mathscr{F}_+^1\subset\mathscr{F}_+$. Similarly,
for any given $x\in\mathbb{R}^d_+$, $0\leq\tau\leq t$, we have
\begin{eqnarray}\sigma\{\omega\mapsto\Phi(\tau,\theta_{-t}\omega)x\}
&\subset&\theta_{-t}^{-1}\mathscr{F}_0^\tau\nonumber\\
&=&\sigma\{\omega\mapsto W_s(\theta_{-t}\omega): 0\leq s\leq \tau\}\nonumber\\
&=&\sigma\{\omega\mapsto W_{s-t}(\omega)-W_{-t}(\omega): 0\leq s\leq \tau\}\nonumber\\
&\subset&\mathscr{F}_-,\nonumber
\end{eqnarray}
which gives that $\mathscr{F}_-^1\subset\mathscr{F}_-$, and (\ref{eq9}) is proved. The same proof works for (\ref{eq10}).
\qquad\end{proof}

By definitions of $\theta$ and the metric $\varrho$ on the space $\Omega=C_0(\mathbb{R},\mathbb{R}^d)$, it follows immediately that
$\theta:\mathbb{R}\times\Omega\mapsto\Omega$ is continuous, see \cite[Chapter 2, p. 74-75]{A}. For the purpose of readability and making
this paper self-contained, we present a proof of the continuity of $\theta(\cdot,\omega):\mathbb{R}\mapsto\Omega$ for all $\omega\in\Omega$, which is sufficient for our discussion.

\begin{proposition}\label{pro2}
For any $\omega\in\Omega$,
$\theta(\cdot,\omega):(\mathbb{R},|\cdot|)\mapsto(\Omega,\varrho)$
is continuous.
\end{proposition}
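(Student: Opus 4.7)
The plan is to fix $\omega\in\Omega$ and $t_0\in\mathbb{R}$, and show that $\varrho(\theta_t\omega,\theta_{t_0}\omega)\to 0$ as $t\to t_0$. The main tool is that $\omega\in C(\mathbb{R},\mathbb{R}^d)$ is uniformly continuous on any compact interval, and the truncating structure of the metric $\varrho$ lets us reduce an ``all of $\mathbb{R}$'' estimate to a finite-interval estimate plus a geometric tail.

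More concretely, I would begin by fixing $\varepsilon>0$ and choosing $N\in\mathbb{N}$ large enough that $\sum_{n>N}2^{-n}<\varepsilon/2$; since $\frac{\varrho_n}{1+\varrho_n}\le 1$, the tail of $\varrho(\theta_t\omega,\theta_{t_0}\omega)$ starting from $n=N+1$ is automatically at most $\varepsilon/2$, uniformly in $t$. It then remains to control $\varrho_n(\theta_t\omega,\theta_{t_0}\omega)$ for $n=1,\dots,N$, and since $\varrho_n$ is monotone in $n$ and $\frac{x}{1+x}$ is increasing in $x\ge 0$, it suffices to make $\varrho_N(\theta_t\omega,\theta_{t_0}\omega)$ small.

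For the crucial estimate, I would expand
\[
|\theta_t\omega(s)-\theta_{t_0}\omega(s)|=|\omega(t+s)-\omega(t)-\omega(t_0+s)+\omega(t_0)|\le|\omega(t+s)-\omega(t_0+s)|+|\omega(t)-\omega(t_0)|,
\]
for $s\in[-N,N]$. Restricting attention to $t$ with $|t-t_0|\le 1$, the arguments $t+s$, $t_0+s$, $t$, $t_0$ all lie in the compact interval $I:=[-N-1-|t_0|,N+1+|t_0|]$, on which $\omega$ is uniformly continuous. Hence there exists $\delta\in(0,1]$ such that $|\omega(u)-\omega(v)|<\varepsilon/4$ whenever $u,v\in I$ with $|u-v|<\delta$, so that for $|t-t_0|<\delta$ one obtains $\varrho_N(\theta_t\omega,\theta_{t_0}\omega)<\varepsilon/2$, and therefore $\sum_{n=1}^N 2^{-n}\frac{\varrho_n}{1+\varrho_n}<\varepsilon/2$.

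Combining the two bounds yields $\varrho(\theta_t\omega,\theta_{t_0}\omega)<\varepsilon$ for all $t$ with $|t-t_0|<\delta$, proving continuity of $\theta(\cdot,\omega)$ at $t_0$. There is no real obstacle here — the only subtle point is the uniform control: one must enlarge the compact interval on which uniform continuity of $\omega$ is invoked to include both the shifted arguments $t+s,t_0+s$ for $s\in[-N,N]$ and the evaluation points $t,t_0$, which is handled by the $\delta\le 1$ restriction and the choice of $I$ above.
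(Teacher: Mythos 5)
Your proof is correct and follows essentially the same route as the paper's: truncate the series defining $\varrho$ to reduce to finitely many $\varrho_n$, split $|\theta_t\omega(s)-\theta_{t_0}\omega(s)|$ by the triangle inequality into $|\omega(t+s)-\omega(t_0+s)|+|\omega(t)-\omega(t_0)|$, and invoke uniform continuity of $\omega$ on a suitably enlarged compact interval. The only cosmetic difference is that you argue with an explicit $\varepsilon$--$\delta$ formulation while the paper uses sequences $t_k\to t_0$.
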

\begin{proof}
Given fixed $t_0\in\mathbb{R}$ and $\omega\in\Omega$, let $\{t_k\}_{k\in\mathbb{N}}$ be a sequence in $\mathbb{R}$ such that $t_k\rightarrow t_0$ as $k\rightarrow\infty$. We only need to show that
$\varrho(\theta_{t_k}\omega,\theta_{t_0}\omega)\rightarrow0$, i.e., $\varrho\left(\omega(t_k+\cdot)-\omega(t_k),\omega(t_0+\cdot)-\omega(t_0)\right)\rightarrow0$.
Observing that for $\forall\varepsilon>0$, there exists $N=N(\varepsilon)\in\mathbb{N}$ such that
\[\sum_{n=N}^\infty\frac{1}{2^n}\frac{\varrho_n\left(\omega(t_k+\cdot)-\omega(t_k),\omega(t_0+\cdot)-\omega(t_0)\right)}
{1+\varrho_n\left(\omega(t_k+\cdot)-\omega(t_k),\omega(t_0+\cdot)-\omega(t_0)\right)}
\leq\sum_{n=N}^\infty\frac{1}{2^n}<\varepsilon.\]
The proof is completed by showing that for all $1\leq n\leq N$, we have
\[\varrho_n\left(\omega(t_k+\cdot)-\omega(t_k),\omega(t_0+\cdot)-\omega(t_0)\right)\rightarrow0,\qquad as\quad k\rightarrow\infty.\]
Since $t_k\rightarrow t_0$, it is obvious that $\{t_k\}_{k\in\mathbb{N}}$ is bounded, which yields that there exists $M_N>0$ such that
$|t_k+t|\leq M_N$ and $|t_0+t|\leq M_N$ uniformly for all $k\in\mathbb{N}$, $t\in[-n,n]$, $1\leq n\leq N$. On the other hand, we note that any continuous function on a closed and bounded interval $[a,b]$
is uniformly continuous, which reveals that for all $1\leq n\leq N$,
\begin{eqnarray}
& &\varrho_n\left(\omega(t_k+\cdot)-\omega(t_k),\omega(t_0+\cdot)-\omega(t_0)\right)\nonumber\\
&=&\max_{t\in[-n,n]}|\omega(t_k+t)-\omega(t_k)-\omega(t_0+t)+\omega(t_0)|\nonumber\\
&\leq&\max_{t\in[-n,n]}|\omega(t_k+t)-\omega(t_0+t)|+|\omega(t_k)-\omega(t_0)|\nonumber\\
&\rightarrow&0,\quad {\rm as}\quad k\rightarrow\infty.\nonumber
\end{eqnarray}
The proof is complete.
\qquad\end{proof}

\begin{proposition}\label{pro31}$\theta:\mathbb{R_-}\times\Omega\mapsto\Omega$ is $(\mathscr{B}(\mathbb{R_-})\otimes\mathscr{F}_-, \mathscr{F}_-)$-measurable and $\theta:\mathbb{R_+}\times\Omega\mapsto\Omega$ is $(\mathscr{B}(\mathbb{R_+})\otimes\mathscr{F}_+, \mathscr{F}_+)$-measurable.
\end{proposition}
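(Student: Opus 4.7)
The plan is to exploit the fact that $\mathscr{F}_-$ is generated by the evaluation functionals $\omega\mapsto W_s(\omega)$ for $s\leq 0$, so by the standard characterization of measurability into a generated $\sigma$-algebra, it suffices to prove that, for every fixed $s\leq 0$, the map
\[(t,\omega)\longmapsto W_s(\theta_t\omega)\]
from $\mathbb{R}_-\times\Omega$ into $\mathbb{R}^d$ is $(\mathscr{B}(\mathbb{R}_-)\otimes\mathscr{F}_-,\mathscr{B}(\mathbb{R}^d))$-measurable. The analogous reduction works for the $\mathbb{R}_+$ claim. This is the right way to convert a measurability statement with range a somewhat abstract path space into one with range $\mathbb{R}^d$.

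Next I would unwind the shift explicitly: by the definition of $\theta_t\omega(\cdot)=\omega(t+\cdot)-\omega(t)$ one has
\[W_s(\theta_t\omega)=\theta_t\omega(s)=W_{s+t}(\omega)-W_t(\omega).\]
For the past case, $s\leq 0$ and $t\leq 0$ force $s+t\leq 0$ and $t\leq 0$, so both $\omega\mapsto W_{s+t}(\omega)$ and $\omega\mapsto W_t(\omega)$ are, for each fixed such $t$, $\mathscr{F}_-$-measurable by the very definition of $\mathscr{F}_-$. Symmetrically, when $s\geq 0$ and $t\geq 0$ the two indices $s+t$ and $t$ are non-negative, hence the analogous slice maps are $\mathscr{F}_+$-measurable. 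These sign observations are exactly what turns the global statement ``$\theta$ is $\mathscr{F}$-measurable'' into the refined past/future versions; without the sign restriction one of the two increments would involve a Brownian value at a time of the wrong sign.

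What remains is to upgrade the slice-wise measurability in $\omega$ to joint measurability in $(t,\omega)$, and this is the step where a little care is needed. For each fixed $\omega\in\Omega$, the map $t\mapsto W_{s+t}(\omega)-W_t(\omega)$ is continuous (it is just a difference of values of the continuous path $\omega$), while for each fixed $t$ it is $\mathscr{F}_-$-measurable as explained above. I would invoke the standard Carathéodory-type fact that a function on a product of a separable metric space and a measurable space that is separately continuous in the first variable and measurable in the second is jointly measurable. Concretely one approximates $t\in\mathbb{R}_-$ by a step function $t_n(t)$ taking values in a countable subset of $\mathbb{R}_-$ with $t_n(t)\to t$; the maps $(t,\omega)\mapsto W_{s+t_n(t)}(\omega)-W_{t_n(t)}(\omega)$ are countable combinations of $\mathscr{F}_-$-measurable slices indexed by Borel sets in $\mathbb{R}_-$, hence jointly $\mathscr{B}(\mathbb{R}_-)\otimes\mathscr{F}_-$-measurable, and by continuity in $t$ they converge pointwise to $W_s(\theta_t\omega)$.

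The main obstacle is purely this last joint-measurability step, since the ``obvious'' route via joint continuity of $\theta:\mathbb{R}\times\Omega\to\Omega$ only delivers measurability into the full Borel $\sigma$-algebra $\mathscr{F}$, not into the strictly smaller $\mathscr{F}_-$. The whole point of the proposition is to refine that range $\sigma$-algebra, and the sign bookkeeping in the identity $W_s(\theta_t\omega)=W_{s+t}(\omega)-W_t(\omega)$, combined with the separate-continuity/separate-measurability approximation, is what makes the refinement possible. The $\mathbb{R}_+\times\Omega\mapsto\Omega$ statement is then verbatim the same argument with every inequality reversed.
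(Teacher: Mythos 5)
Your proof is correct, and it reaches the conclusion by a technically different route from the paper's. The paper first establishes the slice measurability $\theta_t^{-1}\mathscr{F}_-\subset\mathscr{F}_-$ for each fixed $t\le 0$ (the same computation as your sign bookkeeping with $W_s(\theta_t\omega)=W_{s+t}(\omega)-W_t(\omega)$), and then approximates $\theta$ by step maps $\theta_p(t,\omega)=\theta(t_n,\omega)$ over a dense sequence $\{t_n\}\subset\mathbb{R}_-$, exactly in the spirit of your approximation --- but it carries out the limiting argument at the level of the $\Omega$-valued map. To conclude that the pointwise limit of the $\theta_p$ remains $\mathscr{F}_-$-measurable, the paper must identify $\mathscr{F}_-$ with the Borel $\sigma$-algebra $\mathscr{B}_{\varrho^-}(\Omega)$ of the pseudometric $\varrho^-$ built from $\max_{t\in[-n,0]}|\omega(t)-\omega^\ast(t)|$, note that $\varrho^-\le\varrho$ so that $\varrho$-convergence implies $\varrho^-$-convergence, and then invoke a theorem on pointwise limits of measurable maps into pseudometric spaces. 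Your reduction through the generators $\omega\mapsto W_s(\omega)$, $s\le 0$, pushes the entire argument down to $\mathbb{R}^d$-valued maps, where ``a pointwise limit of measurable functions is measurable'' is elementary; this buys you independence from the nontrivial (cited) identification $\mathscr{F}_-=\mathscr{B}_{\varrho^-}(\Omega)$, at the cost of the routine observation that measurability into a generated $\sigma$-algebra can be checked on the generating maps. Both arguments ultimately rest on the same two ingredients: the sign-preserving increment identity and the separate-continuity/separate-measurability approximation, so your proof is a clean, self-contained alternative to the one in the text.
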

\begin{proof}The proof of this proposition is mainly motivated by the proof of Lemma 3.14 in \cite{CV}.
For convenience, we only deal with the case of time $\mathbb{R_-}$, and the rest of this proposition can be obtained analogously.
Firstly, for any $t\leq0$, we have
\begin{eqnarray}\theta_t^{-1}\mathscr{F}_-
&=&\theta_t^{-1}\sigma\{\omega\mapsto W_s(\omega):s\leq0\}\nonumber\\
&=&\sigma\{\omega\mapsto W_s(\theta_t\omega):s\leq0\}\nonumber\\
&=&\sigma\{\omega\mapsto W_{s+t}(\omega)-W_t(\omega):s\leq0\}\nonumber\\
&\subset&\mathscr{F}_-,\nonumber
\end{eqnarray}
which implies that $\theta(t,\cdot):(\Omega,\mathscr{F}_-)\mapsto(\Omega,\mathscr{F}_-)$ is measurable for any $t\in\mathbb{R_-}$.
Moreover, let $\{t_n\}_{n=1}^\infty$ denote a dense sequence in $\mathbb{R_-}$.
For any $p\geq1$, $p\in\mathbb{N}$, define $\theta_p(t,\omega)=\theta(t_n,\omega)$, where $n$ is the smallest integer such that $t$ belongs to the open interval $B(t_n,\frac1p):=\{s\in\mathbb{R_-}:|s-t_n|<\frac{1}{p}\}$. Note that $\theta_p$ is equal to the map $(t,\omega)\mapsto\theta(t_n,\omega)$ on $[B(t_n,\frac1p)-\bigcup\limits_{m<n}B(t_m,\frac1p)]\times\Omega$, and so
for any $F\in\mathscr{F}_-$, it follows that

\begin{eqnarray}
\theta^{-1}_pF&=&\bigcup^\infty\limits_{n=1}\left\{\theta^{-1}_pF
\bigcap\Big\{[B(t_n,\frac1p)-\bigcup\limits_{m<n}B(t_m,\frac1p)]\times\Omega\Big\}\right\}\nonumber\\
&=&\bigcup^\infty\limits_{n=1}\left\{[B(t_n,\frac1p)-\bigcup\limits_{m<n}B(t_m,\frac1p)]\times\theta_{t_n}^{-1}F\right\}\nonumber\\
&\in&\mathscr{B}(\mathbb{R_-})\otimes\mathscr{F}_-.\nonumber
\end{eqnarray}
This yields that $\theta_p$ is $(\mathscr{B}(\mathbb{R_-})\otimes\mathscr{F}_-, \mathscr{F}_-)$-measurable.
It is well known that $\mathscr{F}_-=\mathscr{B}_{\varrho^-}(\Omega)$, which is the Borel $\sigma$-algebra generated by open sets with respect to the pseudometric $\varrho^-$, see \cite{A,KS}. Here, the pseudometric $\varrho^-$ is defined as follows
\[\varrho^-(\omega,\omega^\ast):=\sum_{n=1}^\infty\frac{1}{2^n}\frac{\varrho_n^-(\omega,\omega^\ast)}{1+\varrho_n^-(\omega,\omega^\ast)},\quad
\varrho_n^-(\omega,\omega^\ast)=\max_{t\in[-n,0]}|\omega(t)-\omega^\ast(t)|.\]
Therefore, $\theta_p$ is $(\mathscr{B}(\mathbb{R_-})\otimes\mathscr{F}_-, \mathscr{B}_{\varrho^-}(\Omega))$-measurable for all $p\in\mathbb{N}$.
By Proposition {\rm \ref{pro2}}, it is clear that under the metric $\varrho$, $\theta_p(t,\omega)\rightarrow\theta(t,\omega)$ as $p\rightarrow\infty$ for all $t\in\mathbb{R_-}$ and $\omega\in\Omega$, which together with the fact that $\varrho^-(\omega,\omega^\ast)\leq\varrho(\omega,\omega^\ast)$ implies that under the pseudometric $\varrho^-$, $\theta_p(t,\omega)\rightarrow\theta(t,\omega)$ as $p\rightarrow\infty$ for all $t\in\mathbb{R_-}$ and $\omega\in\Omega$. Then by Theorem 21.3 in \cite{Sc}, it follows that $\theta:\mathbb{R_-}\times\Omega\mapsto\Omega$ is $(\mathscr{B}(\mathbb{R_-})\otimes\mathscr{F}_-, \mathscr{B}_{\varrho^-}(\Omega))$-measurable, i.e., $(\mathscr{B}(\mathbb{R_-})\otimes\mathscr{F}_-, \mathscr{F}_-)$-measurable. The proof is complete.
\qquad\end{proof}

\begin{proposition}\label{pro3}
For any $\mathscr{F}_-$-measurable tempered random variable $u$ in $\mathbb{R}^d_+$,
$\mathcal{K}(u)$ is a random variable with respect to the $\sigma$-algebra $\mathscr{F}_-$.
\end{proposition}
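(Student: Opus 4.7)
The plan is to prove that the integrand $G(s,\omega):=\Phi(-s,\theta_s\omega)\,u(\theta_s\omega)$ is jointly $\mathscr{B}(\mathbb{R}_-)\otimes\mathscr{F}_-$-measurable on $\mathbb{R}_-\times\Omega$, truncate the improper integral defining $\mathcal{K}(u)$ to $[-n,0]$, invoke Fubini's theorem to get $\mathscr{F}_-$-measurability of each truncation, and pass to the limit $n\to\infty$ using the integrable majorant already produced in Remark 1.

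Joint measurability of the scalar factor $(s,\omega)\mapsto u(\theta_s\omega)$ is immediate: by Proposition \ref{pro31} the map $(s,\omega)\mapsto\theta_s\omega$ is $(\mathscr{B}(\mathbb{R}_-)\otimes\mathscr{F}_-,\mathscr{F}_-)$-measurable, and $u$ is $\mathscr{F}_-$-measurable by hypothesis, so the composition is $\mathscr{B}(\mathbb{R}_-)\otimes\mathscr{F}_-$-measurable.

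The harder half is the matrix-valued factor $F(s,\omega):=\Phi(-s,\theta_s\omega)$. For each fixed $s\leq 0$ Proposition \ref{pro1} (with $\tau=t=-s$) gives $\mathscr{F}_-$-measurability in $\omega$, but joint measurability does not follow from a Carath\'eodory criterion because $\Phi(t,\cdot):\Omega\to\mathbb{R}^{d\times d}$ is not continuous in the noise variable. I would therefore mimic the dyadic discretization from the proof of Proposition \ref{pro31}. Fix a dense sequence $\{t_n\}_{n\geq 1}\subset\mathbb{R}_-$, and for each $p\in\mathbb{N}$ let $n(s,p)$ be the smallest index with $t_{n(s,p)}\in[s-1/p,\,s]$; set $F_p(s,\omega):=\Phi(-t_{n(s,p)},\theta_{t_{n(s,p)}}\omega)$. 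Each $F_p$ is a countable union of measurable rectangles $\{s:n(s,p)=n\}\times\Omega$ on which it coincides with the $\mathscr{F}_-^1\subset\mathscr{F}_-$-measurable map $\omega\mapsto\Phi(-t_n,\theta_{t_n}\omega)$, so $F_p$ is $\mathscr{B}(\mathbb{R}_-)\otimes\mathscr{F}_-$-measurable. The cocycle identity then yields
\[F_p(s,\omega)=\Phi(-s,\theta_s\omega)\,\Phi\bigl(s-t_{n(s,p)},\theta_{t_{n(s,p)}}\omega\bigr),\]
and since $s-t_{n(s,p)}\to 0^+$ while $\theta_{t_{n(s,p)}}\omega\to\theta_s\omega$ in $\Omega$ by Proposition \ref{pro2}, continuous dependence of the linear SDE's fundamental matrix on both the vanishing time horizon and the driving path (whose increments over the shrinking interval $[0,s-t_{n(s,p)}]$ tend uniformly to zero) gives $\Phi(s-t_{n(s,p)},\theta_{t_{n(s,p)}}\omega)\to\mathrm{id}$, whence $F_p\to F$ pointwise and $F$ inherits joint $\mathscr{B}(\mathbb{R}_-)\otimes\mathscr{F}_-$-measurability.

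With joint measurability of $G=F\cdot(u\circ\theta)$ in hand, Fubini's theorem makes $\omega\mapsto\int_{-n}^{0}G(s,\omega)\,ds$ $\mathscr{F}_-$-measurable for every $n$; the estimate in Remark 1 supplies an integrable majorant uniform in $n$, so monotone (equivalently dominated) convergence gives $\int_{-n}^0 G(s,\omega)\,ds\to[\mathcal{K}(u)](\omega)$ for every $\omega$, and $\mathcal{K}(u)$ is $\mathscr{F}_-$-measurable as a pointwise limit of $\mathscr{F}_-$-measurable maps. The principal obstacle is the joint measurability of $F$, specifically the limit $\Phi(s-t_{n(s,p)},\theta_{t_{n(s,p)}}\omega)\to\mathrm{id}$: this is the one step where the non-continuity of $\Phi$ in the noise variable threatens to bite, and it is navigated by driving the time horizon to $0$ simultaneously with the noise-argument limit, so that the relevant Brownian increments are taken over a vanishing interval.
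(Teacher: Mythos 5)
Your overall architecture coincides with the paper's: joint $\mathscr{B}(\mathbb{R_-})\otimes\mathscr{F}_-$-measurability of $u(\theta_s\omega)$ via Proposition \ref{pro31}, joint measurability of $\Phi(-s,\theta_s\omega)$, then Fubini on truncations and a limit. The divergence is entirely in how you handle $\Phi(-s,\theta_s\omega)$, and there your reasoning goes astray. You reject the Carath\'eodory-type criterion on the grounds that $\Phi(t,\cdot)$ is not continuous in $\omega$, but the criterion (Lemma 3.14 in \cite{CV}, the very lemma the paper invokes) requires continuity in only \emph{one} variable — here the time variable — together with measurability in the other. The paper's proof is exactly this: $t\mapsto\Phi(t,\theta_{-t}\omega)$ is continuous for each fixed $\omega$ (continuity of the pull-back trajectory, Remark 1.5.1 in \cite{C}), $\omega\mapsto\Phi(t,\theta_{-t}\omega)$ is $\mathscr{F}_-$-measurable by Proposition \ref{pro1}, and the Carath\'eodory lemma delivers joint measurability in two lines.

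Having declined that route, you re-prove the lemma by hand via dyadic discretization, which forces you to establish pointwise convergence $F_p\to F$, i.e.\ precisely the continuity of $s\mapsto\Phi(-s,\theta_s\omega)$ that you could have cited. Your justification of the key limit $\Phi(s-t_{n(s,p)},\theta_{t_{n(s,p)}}\omega)\to\mathrm{id}$ appeals to ``continuous dependence of the fundamental matrix on the driving path'' under uniform convergence of increments; as a general principle for It\^o equations this is false (solution maps are not continuous in the uniform topology on paths — this is the raison d'\^etre of rough path theory), so as written this step is a genuine gap. It can be repaired either by invoking the pull-back continuity from \cite[Remark 1.5.1]{C} directly (which makes the whole discretization unnecessary) or by exploiting the specific diagonal structure (\ref{eq1}) of the noise to reduce to a random ODE; but some such input is needed, and your argument does not supply it.
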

\begin{proof}
By Proposition {\rm \ref{pro31}},  it is immediate that $\theta:\mathbb{R_-}\times\Omega\mapsto\Omega$ is $(\mathscr{B}(\mathbb{R_-})\otimes\mathscr{F}_-, \mathscr{F}_-)$-measurable, which shows that $u(\theta_t\omega)$ is  $(\mathscr{B}(\mathbb{R_-})\otimes\mathscr{F}_-,\mathscr{B}(\mathbb{R}^d_+))$-measurable. Moreover, it is known that
\[(t,x)\mapsto\Phi(t,\theta_{-t}\omega)x\ \mbox{is continuous},\quad \omega\in\Omega,\]
from $\mathbb{R}_+\times\mathbb{R}^d_+$ into $\mathbb{R}^d_+$, see Remark 1.5.1 in \cite{C}. Thus, $t\mapsto\Phi(t,\theta_{-t}\omega)$ is also continuous from $\mathbb{R}_+$ into $\mathbb{R}^{d\times d}_+$, $\omega\in\Omega$. This, together with the fact that $\omega\mapsto\Phi(t,\theta_{-t}\omega)$, $t\in\mathbb{R}_+$ is $\mathscr{F}_-$-measurable by Proposition \ref{pro1}, yields that $\Phi(t,\theta_{-t}\omega)$ is
$(\mathscr{B}(\mathbb{R_+})\otimes\mathscr{F}_-,\mathscr{B}(\mathbb{R}^{d\times d}_+))$-measurable by Lemma 3.14 in \cite{CV}. Combining the definition of $\mathcal{K}$ and Fubini's theorem, it is easy to get the measurability of $\mathcal{K}(u)$. We complete the proof.
\qquad\end{proof}

\begin{proposition}\label{pro4}
For any $\tau>0$, define
\[\xi_\tau^h(\omega)=\inf\overline{\{h(\varphi(t,\theta_{-t}\omega)x):t\geq\tau\}}\]
and
\[\eta_\tau^h(\omega)=\sup\overline{\{h(\varphi(t,\theta_{-t}\omega)x):t\geq\tau\}},\quad x\in\mathbb{R}^d_+,\ \omega\in\Omega.\]
Here {\rm inf} and {\rm sup} represent the greatest lower bound and the least upper bound, respectively.
Then $\xi_\tau^h(\omega)$ and $\eta_\tau^h(\omega)$ are $\mathscr{F}_-$-measurable random variables. More precisely, they are random variables with respect to $\mathscr{F}^2_-$.
\end{proposition}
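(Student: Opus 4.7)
The plan is to prove the stronger statement that $\xi_\tau^h$ and $\eta_\tau^h$ are $\mathscr{F}_-^2$-measurable; $\mathscr{F}_-$-measurability is then immediate from Proposition~\ref{pro1}. The overall strategy is to (i)~pass to a componentwise real-valued $\inf/\sup$, (ii)~reduce the uncountable $\inf/\sup$ over $t\geq\tau$ to one over a countable dense set by continuity of the pull-back trajectory in $t$, and (iii)~invoke the defining generating family of $\mathscr{F}_-^2$.

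First I would note that, since the order on $\mathbb{R}^d_+$ is coordinate-wise and the orbit $\{\varphi(t,\theta_{-t}\omega)x:t\geq 0\}$ is bounded (Remark~1), the greatest lower bound and least upper bound of $\overline{\{h(\varphi(t,\theta_{-t}\omega)x):t\geq\tau\}}$ reduce to the componentwise real infimum and supremum, and the closure can be dropped since $\inf\overline{B}=\inf B$ and $\sup\overline{B}=\sup B$ for any bounded $B\subset\mathbb{R}$. Hence
\[\xi_\tau^h(\omega)_i=\inf_{t\geq\tau}h_i(\varphi(t,\theta_{-t}\omega)x),\qquad \eta_\tau^h(\omega)_i=\sup_{t\geq\tau}h_i(\varphi(t,\theta_{-t}\omega)x),\quad i=1,\ldots,d.\]
Next, invoking Remark~1.5.1 of \cite{C} exactly as in the proof of Proposition~\ref{pro3} (where it was applied to $\Phi$), the pull-back map $(t,x)\mapsto\varphi(t,\theta_{-t}\omega)x$ is continuous from $\mathbb{R}_+\times\mathbb{R}^d_+$ into $\mathbb{R}^d_+$ for each fixed $\omega\in\Omega$; composing with continuous $h$ (by~(H$_1$), $h\in C^1$) yields continuity of $t\mapsto h_i(\varphi(t,\theta_{-t}\omega)x)$ on $[\tau,\infty)$. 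Density of $\mathbb{Q}\cap[\tau,\infty)$ in $[\tau,\infty)$ then gives
\[\xi_\tau^h(\omega)_i=\inf_{t\in\mathbb{Q}\cap[\tau,\infty)}h_i(\varphi(t,\theta_{-t}\omega)x),\qquad \eta_\tau^h(\omega)_i=\sup_{t\in\mathbb{Q}\cap[\tau,\infty)}h_i(\varphi(t,\theta_{-t}\omega)x).\]

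Finally, for each fixed $t\geq 0$ and $x\in\mathbb{R}^d_+$ the map $\omega\mapsto\varphi(t,\theta_{-t}\omega)x$ is (taking $\tau'=t$) one of the generators of $\mathscr{F}_-^2$ listed in Proposition~\ref{pro1}, and so is $\mathscr{F}_-^2$-measurable. Composition with the Borel-measurable coordinate functions $h_i$ preserves measurability, and a countable $\inf$ or $\sup$ of real-valued measurable functions is again measurable. This delivers the $\mathscr{F}_-^2$-measurability of $\xi_\tau^h$ and $\eta_\tau^h$, concluding the proof.

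The only non-routine ingredient in this plan is the continuity of the pull-back $t\mapsto\varphi(t,\theta_{-t}\omega)x$; once this is granted, everything else is a transparent $\sigma$-algebra manipulation. This continuity is \emph{not} the joint continuity of $(t,\omega)\mapsto\varphi(t,\omega)x$, which is generally false for SDE-generated cocycles, but rather the continuity of the composition with the shift $\theta_{-t}\omega$, which is guaranteed by the continuous-path structure of $\Omega$ (Proposition~\ref{pro2}) together with the regularity result of Remark~1.5.1 in \cite{C}. If one preferred a self-contained argument, the same continuity can be extracted from the variation-of-constants identity~(\ref{eq7}): continuity of $t\mapsto\Phi(t,\theta_{-t}\omega)x$ (already used in Proposition~\ref{pro3}) combined with local Lipschitz continuity of $h$ and a Gronwall estimate over the $\omega$-dependent bounded orbit would yield the required continuity without further input.
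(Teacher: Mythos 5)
Your proof is correct and follows essentially the same route as the paper, which simply defers the details to Proposition 3.2 of \cite{JL}: the reduction to componentwise countable infima/suprema via continuity of $t\mapsto h(\varphi(t,\theta_{-t}\omega)x)$, combined with the $\mathscr{F}_-^2$-measurability of each $\omega\mapsto\varphi(t,\theta_{-t}\omega)x$ from Proposition~\ref{pro1}. Your explicit justification of the pull-back continuity (Remark 1.5.1 of \cite{C} applied to $\varphi$ rather than $\Phi$) is exactly the ingredient the paper leaves implicit.
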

\begin{proof}
First, we claim that $\xi_\tau^h(\omega)$ and $\eta_\tau^h(\omega)$ are random variables with respect to $\mathscr{F}$. The proof is similar in spirit to Proposition 3.2 in \cite{JL}, we omit it here. Since $h(\varphi(t,\theta_{-t}\omega)x)$ is an $\mathscr{F}_-^2$-measurable random variable, $t\geq0$, $x\in\mathbb{R}^d_+$, by the virtue of the proof of Proposition 3.2 in \cite{JL}, we have $\xi_\tau^h(\omega)$ and $\eta_\tau^h(\omega)$ are $\mathscr{F}_-^2$-measurable, and due to Proposition {\rm \ref{pro1}} we can obtain the required conclusion. The proof is complete.
\qquad\end{proof}

\begin{lemma}[{\rm \cite[Lemma A.2]{FS3}}]\label{lem1}
Assume that $(x_\alpha)_{\alpha\in \Lambda}$ is a net in a normed space $X$ with a solid, normal cone $X_+\subseteq X$, which defines a partial
order on $X$. Furthermore, suppose that the net converges
to an element $x_\infty\in X$, and
\[x_\alpha^-:=\inf\{x_{\alpha'}: \alpha'\geq\alpha\}\quad and\quad x_\alpha^+:=\sup\{x_{\alpha'}: \alpha'\geq\alpha\}\]
exist for every $\alpha\in \Lambda$. Then the nets $(x_\alpha^-)_{\alpha\in \Lambda}$ and $(x_\alpha^+)_{\alpha\in \Lambda}$ also converge
to $x_\infty$.
\end{lemma}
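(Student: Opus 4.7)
The plan is to combine solidity and normality of $X_+$ so as to convert the norm-convergence $x_\alpha\to x_\infty$ into a two-sided order squeeze on the monotone nets $(x_\alpha^-)$ and $(x_\alpha^+)$. First I would exploit solidity by fixing an interior point $e$ of $X_+$ and choosing $\delta>0$ with $B(e,\delta)\subset X_+$; a standard scaling argument then gives the sandwich
\[
-\frac{\|z\|}{\delta}\,e \;\le\; z \;\le\; \frac{\|z\|}{\delta}\,e, \qquad \mbox{for every } z\in X.
\]
Given $\varepsilon>0$, apply this to $z=x_{\alpha'}-x_\infty$ and use $x_\alpha\to x_\infty$ to pick $\alpha_0$ such that $\|x_{\alpha'}-x_\infty\|<\varepsilon$ for $\alpha'\ge\alpha_0$, so that
\[
x_\infty-\frac{\varepsilon}{\delta}\,e \;\le\; x_{\alpha'} \;\le\; x_\infty+\frac{\varepsilon}{\delta}\,e,\qquad \alpha'\ge\alpha_0.
\]

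For every $\alpha\ge\alpha_0$ the element $x_\infty+\tfrac{\varepsilon}{\delta}e$ is then an upper bound of $\{x_{\alpha'}:\alpha'\ge\alpha\}$, so by definition of the supremum $x_\alpha^+\le x_\infty+\tfrac{\varepsilon}{\delta}e$; symmetrically $x_\alpha^-\ge x_\infty-\tfrac{\varepsilon}{\delta}e$. The matching opposite inequalities $x_\alpha^-\le x_\infty\le x_\alpha^+$ come from letting $\alpha'$ run through the tail in $x_\alpha^-\le x_{\alpha'}\le x_\alpha^+$ and invoking closedness of $X_+$. Combining these yields
\[
0\;\le\; x_\alpha^+-x_\infty \;\le\; \frac{\varepsilon}{\delta}\,e,\qquad 0\;\le\; x_\infty-x_\alpha^- \;\le\; \frac{\varepsilon}{\delta}\,e,
\]
for all $\alpha\ge\alpha_0$. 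A final application of normality (producing a constant $c>0$ with $\|y\|\le c\|y'\|$ whenever $0\le y\le y'$) then gives $\|x_\alpha^{\pm}-x_\infty\|\le c\varepsilon\|e\|/\delta$; since $\varepsilon$ is arbitrary, both nets converge in norm to $x_\infty$.

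The real obstacle is the very first step: one has to see that solidity supplies a single order unit $e$ bounding every element of $X$ proportionally to its norm. Without such an $e$ there is no way to upgrade the metric information $\|x_{\alpha'}-x_\infty\|<\varepsilon$ into a two-sided order estimate, and the sandwich argument collapses. Everything afterwards is routine bookkeeping with the defining properties of $\sup/\inf$, the closedness of $X_+$, and the normality constant.
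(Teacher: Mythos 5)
Your argument is correct and complete: solidity of $X_+$ yields the order-unit sandwich $-\frac{\|z\|}{\delta}e\le z\le\frac{\|z\|}{\delta}e$, the definition of $\sup/\inf$ together with closedness of $X_+$ (applied to the convergent tail) traps $x_\alpha^{\pm}$ between $x_\infty$ and $x_\infty\pm\frac{\varepsilon}{\delta}e$, and normality converts this back into norm convergence. Note that the paper itself offers no proof to compare against --- Lemma \ref{lem1} is imported verbatim as Lemma A.2 of \cite{FS3} --- but your reconstruction is exactly the standard argument for such statements, and the only cosmetic point worth tightening is that if $B(e,\delta)$ is the open ball one should work with some $\delta'<\delta$ (or invoke closedness of $X_+$) when deriving the sandwich inequality.
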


\
\begin{lemma}\label{lem2}
Let assumptions {\rm (A)}, {\rm (L)} and {\rm (H$_1$)} hold. Then
\begin{equation}\label{eq11}\mathcal{K}(\theta-\underline{\lim}\;h(\varphi))\leq\theta-\underline{\lim}\;\varphi\leq \theta-\overline{\lim}\;\varphi\leq\mathcal{K}(\theta-\overline{\lim}\;h(\varphi))\quad {\rm for\ all}\ \omega\in\Omega,
\end{equation}
where
\[[\theta-\underline{\lim}\;h(\varphi)](\omega):=\lim_{\tau\rightarrow\infty}\xi_\tau^h(\omega)=\lim_{\tau\rightarrow\infty}\inf\{h(\varphi(t,\theta_{-t}\omega)x):t\geq\tau\},\quad x\in\mathbb{R}^d_+,\ \omega\in\Omega,\]
and
\[[\theta-\overline{\lim}\;h(\varphi)](\omega):=\lim_{\tau\rightarrow\infty}\eta_\tau^h(\omega)=\lim_{\tau\rightarrow\infty}\sup\{h(\varphi(t,\theta_{-t}\omega)x):t\geq\tau\},\quad x\in\mathbb{R}^d_+,\ \omega\in\Omega.\]
In this way, $\theta-\underline{\lim}\;\varphi$ and $\theta-\overline{\lim}\;\varphi$ can be defined similarly.
\end{lemma}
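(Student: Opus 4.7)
The plan is to exploit the pull-back formula (\ref{eq7}) together with the cone-preserving property of $\Phi$ inherited from assumption (A), sandwiching $h(\varphi(t+s,\theta_{-t}\omega)x)$ between $\xi_\tau^h(\theta_s\omega)$ and $\eta_\tau^h(\theta_s\omega)$, and then passing to the limit first in $t\to\infty$ and subsequently in $\tau\to\infty$. The central inequality $\theta-\underline{\lim}\;\varphi\leq\theta-\overline{\lim}\;\varphi$ is immediate from the pointwise definitions of componentwise infimum and supremum, so the work lies in the two outer inequalities, which are handled in a symmetric fashion.

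Fix $\tau>0$ and take $t\geq 2\tau$. The first step is to split (\ref{eq7}) at $s=-\tau$ into
\[\varphi(t,\theta_{-t}\omega)x=\Phi(t,\theta_{-t}\omega)x+\int_{-t}^{-\tau}\Phi(-s,\theta_s\omega)h(\varphi(t+s,\theta_{-t}\omega)x)\,ds+\int_{-\tau}^{0}\Phi(-s,\theta_s\omega)h(\varphi(t+s,\theta_{-t}\omega)x)\,ds,\]
and to rewrite the inner integrand via the cocycle identity $\theta_{-t}\omega=\theta_{-(t+s)}(\theta_s\omega)$. Since $t+s\geq\tau$ for $s\in[-\tau,0]$, the very definitions of $\xi_\tau^h$ and $\eta_\tau^h$ evaluated at $\theta_s\omega$ give the pointwise sandwich $\xi_\tau^h(\theta_s\omega)\leq h(\varphi(t+s,\theta_{-t}\omega)x)\leq\eta_\tau^h(\theta_s\omega)$. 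Assumption (A) forces $\Phi(-s,\theta_s\omega)$ to be a nonnegative matrix, so this order relation survives multiplication by $\Phi(-s,\theta_s\omega)$ and integration over $[-\tau,0]$. The tail integral $\int_{-t}^{-\tau}$ is itself nonnegative (which trivially supplies the lower bound zero), while the boundedness of $h$ from (H$_1$) combined with (L) and the temperedness argument of Remark 1 bounds its norm from above by a quantity of the form $C(\omega)e^{-\lambda\tau/2}$ uniformly in $t$; the same temperedness also forces $\Phi(t,\theta_{-t}\omega)x\to 0$ as $t\to\infty$.

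Taking $\liminf$ and $\limsup$ in $t$ along the resulting componentwise inequalities yields, for each fixed $\tau>0$,
\[\int_{-\tau}^{0}\Phi(-s,\theta_s\omega)\xi_\tau^h(\theta_s\omega)\,ds\leq[\theta-\underline{\lim}\;\varphi](\omega)\leq[\theta-\overline{\lim}\;\varphi](\omega)\leq\int_{-\tau}^{0}\Phi(-s,\theta_s\omega)\eta_\tau^h(\theta_s\omega)\,ds+C(\omega)e^{-\lambda\tau/2}.\]
Since shrinking the index set $\{t\geq\tau\}$ makes $\xi_\tau^h$ nondecreasing and $\eta_\tau^h$ nonincreasing in $\tau$, letting $\tau\to\infty$ and invoking Lebesgue's monotone (or dominated) convergence theorem with the integrability of $\|\Phi(-s,\theta_s\omega)\|_2$ on $(-\infty,0]$ from Remark 1 as majorant converts the outer integrals into $[\mathcal{K}(\theta-\underline{\lim}\;h(\varphi))](\omega)$ and $[\mathcal{K}(\theta-\overline{\lim}\;h(\varphi))](\omega)$ respectively, finishing the chain. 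I expect the main obstacle to be precisely this interchange of the $\tau$-limit with the integrals: it requires marrying the uniform-in-$t$ exponential-in-$\tau$ control of the tail integral with the $\mathcal{L}^1$-integrability of the fundamental-matrix norm built from the tempered random variable $R$, and verifying that the monotone behavior of $\xi_\tau^h$ and $\eta_\tau^h$ in $\tau$ is componentwise in $\mathbb{R}^d_+$. Notably, only the positivity and boundedness of $h$ from (H$_1$) are needed here, not its (anti)monotonicity.
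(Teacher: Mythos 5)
Your proposal is correct and follows essentially the same route as the paper: both arguments rest on the sandwich $\xi_\tau^h(\theta_s\omega)\le h(\varphi(t+s,\theta_{-t}\omega)x)\le\eta_\tau^h(\theta_s\omega)$ for $t+s\ge\tau$ (via $\theta_{-t}\omega=\theta_{-(t+s)}\theta_s\omega$), the order-preservation of $\Phi$ together with the positivity and boundedness of $h$ to control the discarded part of the integral, the decay of $\Phi(t,\theta_{-t}\omega)x$, and a final monotone/dominated convergence passage as $\tau\to\infty$. The only cosmetic difference is that you truncate the integral at $-\tau$ and bound the tail explicitly by $C(\omega)e^{-\lambda\tau/2}$, whereas the paper truncates at $\tau_n-\tilde t$ and invokes its net lemma (Lemma \ref{lem1}); both are sound.
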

\begin{proof}
To prove (\ref{eq11}), we start with the first inequality in (\ref{eq11}). We first observe that
\[\inf\{\varphi(t,\theta_{-t}\omega)x:t\geq\tau\}=\inf\overline{\{\varphi(t,\theta_{-t}\omega)x:t\geq\tau\}},\quad x\in\mathbb{R}^d_+,\ \omega\in\Omega,\]
and
\[\inf\{h(\varphi(t,\theta_{-t}\omega)x):t\geq\tau\}=\inf\overline{\{h(\varphi(t,\theta_{-t}\omega)x):t\geq\tau\}},\quad x\in\mathbb{R}^d_+,\ \omega\in\Omega,\]
by Lemma A.1 in \cite{FS3}. By the boundedness of pull-back trajectories for (\ref{problem}) and $h$, in the same manner as the proof of
Proposition {\rm \ref{pro4}}, we can see that well-defined $\theta-\underline{\lim}\;h(\varphi)$ and $\theta-\underline{\lim}\;\varphi$ are two $(\mathscr{F}_-,
\mathscr{B}(\mathbb{R}^d_+))$-measurable random variables. Due to the boundedness of $h$,  $\theta-\underline{\lim}\;h(\varphi)$ is a tempered random variable. Hence,  $\mathcal{K}(\theta-\underline{\lim}\;h(\varphi))$ is an $(\mathscr{F}_-,
\mathscr{B}(\mathbb{R}^d_+))$-measurable random variable by Proposition {\rm \ref{pro3}}.

It follows from the definition of $\theta-\underline{\lim}\;h(\varphi)$ that
\[[\theta-\underline{\lim}\;h(\varphi)](\omega)=\lim_{\tau\rightarrow\infty}\xi_\tau^h(\omega),\quad \omega\in\Omega,\]
which together with the Lebesgue's
dominated convergence theorem \cite{Co} implies that
\[\mathcal{K}(\theta-\underline{\lim}\;h(\varphi))=\lim\limits_{\tau\rightarrow\infty}\mathcal{K}(\xi_\tau),\]
where
$\xi_\tau(\omega):=\inf\{h(\varphi(t,\theta_{-t}\omega)x):t\geq\tau\}$, $x\in\mathbb{R}^d_+,\ \omega\in\Omega$.
Now we choose an increasing sequence of time $\{\tau_n\}_{n\in\mathbb{N}}$ such that $\tau_n\uparrow\infty$ and it is sufficient to show that
\[\mathcal{K}(\xi_{\tau_n})\leq\theta-\underline{\lim}\;\varphi,\quad \omega\in\Omega,\ n\in\mathbb{N}.\]
It is evident that for all $x\in\mathbb{R}^d_+,\ \omega\in\Omega$, $\lim\limits_{t\rightarrow\infty}\Phi(t,\theta_{-t}\omega)x=0$. Therefore,
for any $\tau_n\geq0$, by the definition of $\mathcal{K}$, it follows that
\begin{eqnarray}\label{eq12}
& &[\mathcal{K}(\xi_{\tau_n})](\omega)\nonumber\\
&=& \int_{-\infty}^0\Phi(-s,\theta_s\omega)\inf\{h(\varphi(t,\theta_{-t}\bullet)x):t\geq\tau_n\}(\theta_s\omega)ds\nonumber\\
&=& \int_{-\infty}^0\Phi(-s,\theta_s\omega)\inf\{h(\varphi(t,\theta_{-t+s}\omega)x):t\geq\tau_n\}ds\nonumber\\
&=& \lim_{\substack{\tilde{t}\rightarrow\infty\\\tilde{t}\geq\tau_n}}\left\{\Phi(\tilde{t},\theta_{-\tilde{t}}\omega)x
+\int_{\tau_n-\tilde{t}}^0\Phi(-s,\theta_s\omega)\inf\{h(\varphi(t,\theta_{-t+s}\omega)x):t\geq\tau_n\}ds\right\}\nonumber\\
&=& \lim_{\substack{\tau\rightarrow\infty\\ \tau\geq\tau_n}}\inf\left\{\Phi(\tilde{t},\theta_{-\tilde{t}}\omega)x
+\int_{\tau_n-\tilde{t}}^0\Phi(-s,\theta_s\omega)\inf\{h(\varphi(t,\theta_{-t+s}\omega)x):t\geq\tau_n\}ds:\tilde{t}\geq\tau\right\}\nonumber\\
&\leq&\lim_{\substack{\tau\rightarrow\infty\\\tau\geq\tau_n}}\inf\left\{\Phi(\tilde{t},\theta_{-\tilde{t}}\omega)x
+\int_{\tau_n-\tilde{t}}^0\Phi(-s,\theta_s\omega)h(\varphi(\tilde{t}+s,\theta_{-\tilde{t}}\omega)x)ds
:\tilde{t}\geq\tau\right\}\nonumber\\
&\leq&\lim_{\tau\rightarrow\infty}\inf\left\{\Phi(\tilde{t},\theta_{-\tilde{t}}\omega)x
+\int_{-\tilde{t}}^0\Phi(-s,\theta_s\omega)h(\varphi(\tilde{t}+s,\theta_{-\tilde{t}}\omega)x)ds:\tilde{t}\geq\tau\right\}\nonumber\\
&=&[\theta-\underline{\lim}\;\varphi](\omega)\quad {\rm for\ all}\ \omega\in\Omega.\nonumber
\end{eqnarray}
Here, the above inequality follows by Lemma {\rm \ref{lem1}}, the order-preserving property of $\Phi$ and the positivity of $h$. The rest proof of (\ref{eq11}) runs as before. The proof is complete.
\qquad\end{proof}

\begin{lemma}\label{lem3}
Let assumptions {\rm (A)}, {\rm (L)} and {\rm (H$_1$)} hold. Then $h$ possesses the property
\begin{enumerate}[{\rm (i)}]
\item If $h$ is monotone, then
\begin{equation}\label{eq13}
h(\theta-\underline{\lim}\;\varphi)\leq\theta-\underline{\lim}\;h(\varphi)\leq\theta-\overline{\lim}\;h(\varphi)\leq h(\theta-\overline{\lim}\;\varphi)\quad {\rm for\ all}\ \omega\in\Omega.
\end{equation}
\item If $h$ is anti-monotone, then
\begin{equation}\label{eq14}
h(\theta-\overline{\lim}\;\varphi)\leq\theta-\underline{\lim}\;h(\varphi)\leq\theta-\overline{\lim}\;h(\varphi)\leq h(\theta-\underline{\lim}\;\varphi)\quad {\rm for\ all}\ \omega\in\Omega.
\end{equation}
\end{enumerate}
\end{lemma}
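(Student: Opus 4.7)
The plan is to split along the two cases of (H$_1$) and observe that the middle inequality in each assertion is merely the pointwise $\inf\leq_{\mathbb{R}^d_+}\sup$, so all the real work lies in the two outer inequalities, which compare $h$ applied to an order-liminf/limsup and the order-liminf/limsup of $h\circ\varphi$.

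For case (i), fix $\omega\in\Omega$, $x\in\mathbb{R}^d_+$ and $\tau\geq 0$. For each $s\geq\tau$ the definition of infimum yields $\inf\{\varphi(t,\theta_{-t}\omega)x:t\geq\tau\}\leq_{\mathbb{R}^d_+}\varphi(s,\theta_{-s}\omega)x$, so by monotonicity of $h$,
\[h\bigl(\inf\{\varphi(t,\theta_{-t}\omega)x:t\geq\tau\}\bigr)\leq_{\mathbb{R}^d_+}h(\varphi(s,\theta_{-s}\omega)x),\]
and taking infimum over $s\geq\tau$ gives
\[h\bigl(\inf\{\varphi(t,\theta_{-t}\omega)x:t\geq\tau\}\bigr)\leq_{\mathbb{R}^d_+}\xi_\tau^h(\omega).\]
Because pull-back trajectories are bounded in the normal, solid cone $\mathbb{R}^d_+$ (Remark 1) and $h$ is bounded, the $\tau$-nets on both sides are monotone (infs non-decreasing as $\tau$ grows) and norm-bounded, hence convergent to $\theta-\underline{\lim}\;\varphi$ and $\theta-\underline{\lim}\;h(\varphi)$ respectively. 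Continuity of $h$, which follows from $h\in C^1$ by (H$_1$), then permits passing to $\tau\to\infty$ and produces the first inequality of (\ref{eq13}). The third inequality is obtained by the symmetric argument: from $\varphi(s,\theta_{-s}\omega)x\leq_{\mathbb{R}^d_+}\sup\{\varphi(t,\theta_{-t}\omega)x:t\geq\tau\}$ and monotonicity of $h$ deduce $h(\varphi(s,\theta_{-s}\omega)x)\leq_{\mathbb{R}^d_+}h(\sup\{\cdots\})$, take supremum over $s\geq\tau$, and pass to the limit.

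Case (ii) follows from the identical template with each appeal to monotonicity replaced by anti-monotonicity, so the pointwise comparisons become
\[h\bigl(\sup\{\varphi(t,\theta_{-t}\omega)x:t\geq\tau\}\bigr)\leq_{\mathbb{R}^d_+}h(\varphi(s,\theta_{-s}\omega)x)\quad(s\geq\tau)\]
and
\[h(\varphi(s,\theta_{-s}\omega)x)\leq_{\mathbb{R}^d_+}h\bigl(\inf\{\varphi(t,\theta_{-t}\omega)x:t\geq\tau\}\bigr)\quad(s\geq\tau);\]
taking infimum, respectively supremum, over $s\geq\tau$ and then letting $\tau\to\infty$ delivers the two outer inequalities of (\ref{eq14}).

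The only delicate point is the interchange of $h$ with the $\tau$-limit. This is handled by three ingredients already in hand: the $\tau$-monotonicity of the infimum/supremum nets, their uniform boundedness coming from Remark 1 and (H$_1$), and the continuity of $h$. Since $\mathbb{R}^d_+$ is a normal solid cone, bounded monotone nets converge in norm, and Lemma \ref{lem1} identifies those limits with $\theta-\underline{\lim}\;\varphi$, $\theta-\overline{\lim}\;\varphi$, $\theta-\underline{\lim}\;h(\varphi)$, $\theta-\overline{\lim}\;h(\varphi)$; the order inequalities are then preserved under the continuous map $h$. No measurability or integrability input beyond what is already available from Lemma \ref{lem2} is needed.
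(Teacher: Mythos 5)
Your proof is correct, and it is the standard argument the paper has in mind: the paper itself omits the proof of Lemma \ref{lem3}, deferring to Lemma 3.4 of \cite{JL}, and your route (pointwise order comparison, apply $h$ using (anti-)monotonicity, take $\inf$/$\sup$ over $s\geq\tau$, then pass to the $\tau$-limit using monotone boundedness of the nets, continuity of $h$, and closedness of $\mathbb{R}^d_+$) is exactly that argument. The only cosmetic remark is that your appeal to Lemma \ref{lem1} at the end is superfluous: the quantities $\theta-\underline{\lim}\,\varphi$, etc., are \emph{defined} as the limits of the monotone bounded nets $\xi_\tau$, $\eta_\tau$, which converge componentwise in $\mathbb{R}^d$ without further input.
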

\begin{proof}
The proof of this lemma is very similar to that of Lemma 3.4 in \cite{JL}, we omit it here.
\qquad\end{proof}

\begin{lemma}\label{lem4}
Let assumptions {\rm (A)}, {\rm (L)} and {\rm (H$_1$)} hold. Then
\begin{equation}\label{eq15}
\mathcal{K}(\xi_\tau^h)\leq\theta-\underline{\lim}\;\varphi\leq \theta-\overline{\lim}\;\varphi\leq\mathcal{K}(\eta_\tau^h)\quad {\rm for\ all}\ \omega\in\Omega
\ {\rm and}\ \tau\geq0.
\end{equation}
Here $\xi_\tau^h(\omega)$ and $\eta_\tau^h(\omega)$ are defined in Proposition {\rm \ref{pro4}}. Moreover, let $\mathcal{K}^h:=h\circ\mathcal{K}$ and $\mathcal{K}^h$ is called a gain operator. Then
\begin{enumerate}[{\rm (i)}]
\item If $h$ is monotone, then for any $\tau\geq0$ and $ k\in\mathbb{N}$
\begin{equation}\label{eq16}
(\mathcal{K}^h)^k(\xi_\tau^h)\leq\theta-\underline{\lim}\;h(\varphi)\leq \theta-\overline{\lim}\;h(\varphi)\leq(\mathcal{K}^h)^k(\eta_\tau^h)\ {\rm for\ all}\ \omega\in\Omega.\
\end{equation}
\item  If $h$ is anti-monotone, then for any $\tau\geq0$ and $ k\in\mathbb{N}$
\begin{equation}\label{eq17}
(\mathcal{K}^h)^{2k}(\xi_\tau^h)\leq\theta-\underline{\lim}\;h(\varphi)\leq \theta-\overline{\lim}\;h(\varphi)\leq(\mathcal{K}^h)^{2k}(\eta_\tau^h)\ {\rm for\ all}\ \omega\in\Omega.
\end{equation}
\end{enumerate}
\end{lemma}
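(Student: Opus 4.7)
The first inequality (\ref{eq15}) is an immediate corollary of Lemma \ref{lem2}. Indeed, from the definition $\xi_\tau^h(\omega)=\inf\{h(\varphi(t,\theta_{-t}\omega)x):t\geq\tau\}$, the net $(\xi_\tau^h)_{\tau\geq 0}$ is pointwise nondecreasing in $\tau$ with limit $\theta-\underline{\lim}\;h(\varphi)$, so $\xi_\tau^h\leq \theta-\underline{\lim}\;h(\varphi)$ for every $\tau\geq 0$. Applying the order-preserving operator $\mathcal{K}$ (order-preserving because $\Phi(t,\omega)$ preserves the cone $\mathbb{R}^d_+$) and combining with Lemma \ref{lem2} yields $\mathcal{K}(\xi_\tau^h)\leq \mathcal{K}(\theta-\underline{\lim}\;h(\varphi))\leq \theta-\underline{\lim}\;\varphi$. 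The symmetric argument with $\eta_\tau^h\geq \theta-\overline{\lim}\;h(\varphi)$ gives the upper half of (\ref{eq15}), and the middle inequality is trivial.

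For part (i), the plan is induction on $k$, where the two structural ingredients are: monotonicity of $\mathcal{K}^h=h\circ\mathcal{K}$ (as the composition of two monotone maps), and the fact that $\theta-\underline{\lim}\;h(\varphi)$ is a sub-fixed point of $\mathcal{K}^h$. The latter follows from Lemma \ref{lem2}, monotonicity of $h$, and Lemma \ref{lem3}(i) chained as
\[\mathcal{K}^h(\theta-\underline{\lim}\;h(\varphi))=h\bigl(\mathcal{K}(\theta-\underline{\lim}\;h(\varphi))\bigr)\leq h(\theta-\underline{\lim}\;\varphi)\leq \theta-\underline{\lim}\;h(\varphi),\]
with the analogous super-fixed point property $\mathcal{K}^h(\theta-\overline{\lim}\;h(\varphi))\geq \theta-\overline{\lim}\;h(\varphi)$. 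The base case $k=1$ is the same chain starting from (\ref{eq15}) instead of Lemma \ref{lem2}. For the inductive step, if $(\mathcal{K}^h)^k(\xi_\tau^h)\leq \theta-\underline{\lim}\;h(\varphi)$, then applying the monotone operator $\mathcal{K}^h$ and using the sub-fixed point property yields $(\mathcal{K}^h)^{k+1}(\xi_\tau^h)\leq \mathcal{K}^h(\theta-\underline{\lim}\;h(\varphi))\leq \theta-\underline{\lim}\;h(\varphi)$. The upper bound with $\eta_\tau^h$ is symmetric.

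For part (ii) the subtlety is that $\mathcal{K}^h$ is now \emph{anti}-monotone, so single iterations flip the inequality direction; however, $(\mathcal{K}^h)^{2}$ is monotone. The plan is therefore to replay part (i) with the monotone operator $T:=(\mathcal{K}^h)^2$ in place of $\mathcal{K}^h$. The base case $k=1$ is obtained in two half-steps: starting from $\mathcal{K}(\xi_\tau^h)\leq\theta-\underline{\lim}\;\varphi$, apply anti-monotone $h$ and invoke Lemma \ref{lem3}(ii) to get $\mathcal{K}^h(\xi_\tau^h)\geq h(\theta-\underline{\lim}\;\varphi)\geq \theta-\overline{\lim}\;h(\varphi)$; then apply $\mathcal{K}$ (monotone) followed by $h$ (anti-monotone), using Lemma \ref{lem2} and Lemma \ref{lem3}(ii) a second time, to conclude $T(\xi_\tau^h)=h(\mathcal{K}(\mathcal{K}^h(\xi_\tau^h)))\leq h(\theta-\overline{\lim}\;\varphi)\leq \theta-\underline{\lim}\;h(\varphi)$. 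The same two-step dance with $\theta-\underline{\lim}\;h(\varphi)$ in place of $\xi_\tau^h$ gives the sub-fixed point property $T(\theta-\underline{\lim}\;h(\varphi))\leq \theta-\underline{\lim}\;h(\varphi)$. Induction on $k$ via the monotone operator $T$ then yields the left half of (\ref{eq17}); the right half is symmetric.

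I expect the main bookkeeping obstacle to be in part (ii): one has to keep careful track of which of $\theta-\underline{\lim}$ and $\theta-\overline{\lim}$ appears at each stage, because the two applications of Lemma \ref{lem3}(ii) toggle between them, and only after completing both half-steps do the inequalities close up at the intended targets so that the induction on the monotone operator $(\mathcal{K}^h)^2$ can be triggered.
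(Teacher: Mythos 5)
Your argument is correct: the chain $\xi_\tau^h\leq\theta-\underline{\lim}\,h(\varphi)$ plus monotonicity of $\mathcal{K}$ and Lemma \ref{lem2} gives (\ref{eq15}), and the iteration via the sub/super-fixed-point property of $\mathcal{K}^h$ (respectively of $(\mathcal{K}^h)^2$ in the anti-monotone case, where the two applications of Lemma \ref{lem3}(ii) correctly toggle between $\theta-\underline{\lim}$ and $\theta-\overline{\lim}$) is exactly the standard argument. The paper omits this proof, deferring to Lemma 3.5 of \cite{JL}, and your reconstruction matches that intended route.
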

\begin{proof}
The proof of this lemma is very similar to that of Lemma 3.5 in \cite{JL}, we omit it here.
\qquad\end{proof}

\section{Stability and small-gain theorems}In this section, we mainly consider two kinds of output functions: one is that derivatives of $h$ are bounded and the other is that $h$ is uniformly bounded away from zero. We shall establish two small-gain theorems for guaranteeing the existence and uniqueness of positive random equilibrium and the global stability of pull-back trajectories.
\subsection{Type one: derivatives of $h$ are bounded}
In this subsection, we will state a small-gain theorem in the case that derivatives of $h$ are bounded and apply it to well-known stochastic feedback systems.
In what follows, we shall propose a natural condition that the tempered random variable $R(\omega)$ given in (L) is independent of the $\sigma$-algebra
$\mathscr{F}_-=\sigma\{\omega\mapsto W_t(\omega):t\leq0\}$. Assume that (L) holds. Then it is easy to see that the random variable $R(\omega):= {\rm sup} \{e^{\lambda t}\|\Phi(t,\omega)\|:t\geq 0\}$ is measurable with respect to the $\sigma$-algebra $\mathscr{F}_+=\sigma\{\omega\mapsto W_t(\omega): t\geq0\}$,
which is independent of $\mathscr{F}_-=\sigma\{\omega\mapsto W_t(\omega):t\leq0\}$ by the definition of the two-sided Wiener process, see \cite[Chapter 2, p. 107]{A}. This implies that such an $R$ is independent of $\mathscr{F}_-$.  Nevertheless, we still put this independence in our condition (R) because other choices of $R(\omega)$ may not be $\mathscr{F}_+$-measurable.   Moreover, we assume that the tempered random variable $R\in \mathcal{L}^1(\Omega,\mathscr{F}_+,\mathbb{P})$, which will be illustrated in our examples.
\begin{lemma}\label{lem5}
Let assumptions {\rm (A)}, {\rm (L)} and {\rm (H$_1$)} hold. Assume additionally that the following conditions on $R$ and $h$ are satisfied.
\begin{enumerate}[{\rm ({R})}]
\item
Let $R\in \mathcal{L}^1(\Omega,\mathscr{F}_+,\mathbb{P};\mathbb{R}_+)$ and be independent of the $\sigma$-algebra
$\mathscr{F}_-=\sigma\{\omega\mapsto W_t(\omega):t\leq0\}$.
\end{enumerate}
\begin{enumerate}[{\rm ({H}$_2$)}]
\item
Let $M=\max\{\sup_{x\in\mathbb{R}^d_+}|\frac{\partial h_i(x)}{\partial x_j}|, i,j=1,\ldots,d\}$ such that $\frac{Md^2\|R\|_{\mathcal{L}^1}}{\lambda}<1$, where $\|R\|_{\mathcal{L}^1}=\mathbb{E}R=\int_\Omega R(\omega)\mathbb{P}(d\omega).$ {\rm(}{\rm ({H}$_2$)} is called small-gain condition{\rm )}
\end{enumerate}
Denote by $\mathcal{L}_{\mathscr{F_-}}^1:=\mathcal{L}^1(\Omega,\mathscr{F}_-,\mathbb{P};[0,\Gamma])$ the space of all $\mathscr{F}_-$-measurable functions $f:\Omega\rightarrow[0,\Gamma]$ {\rm(}which must be integrable{\rm )}, where $\Gamma=(\Gamma_1,\ldots,\Gamma_d)$, $\Gamma_i=\sup_{x\in\mathbb{R}_+^d}|h_i(x)|$, $i=1,\ldots,d$. Then the space is complete under the metric $\|u\|_{\mathcal{L}^1}=\mathbb{E}|u|$, $u\in\mathcal{L}_{\mathscr{F_-}}^1$ and the gain operator
$\mathcal{K}^h=h\circ\mathcal{K}:(\mathcal{L}_{\mathscr{F_-}}^1,\|\cdot\|_{\mathcal{L}^1})\rightarrow(\mathcal{L}_{\mathscr{F_-}}^1,\|\cdot\|_{\mathcal{L}^1})$  is a contraction mapping.
\end{lemma}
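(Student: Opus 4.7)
The plan is to verify the three assertions in turn: (i) $\mathcal{L}^{1}_{\mathscr{F}_{-}}$ is a complete metric space, (ii) $\mathcal{K}^{h}$ sends it into itself, and (iii) $\mathcal{K}^{h}$ is a strict contraction with constant $\tfrac{Md^{2}\|R\|_{\mathcal{L}^{1}}}{\lambda}<1$ given by (H$_{2}$). For completeness, I would note that $\mathcal{L}^{1}_{\mathscr{F}_{-}}$ sits inside $L^{1}(\Omega,\mathscr{F},\mathbb{P};\mathbb{R}^{d})$; any Cauchy sequence there has an $L^{1}$-limit, and both $\mathscr{F}_{-}$-measurability and the pointwise constraint $u(\omega)\in[0,\Gamma]$ pass to the limit by extracting an almost-sure convergent subsequence. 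Thus $\mathcal{L}^{1}_{\mathscr{F}_{-}}$ is a closed subspace and inherits completeness.

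For the mapping property, fix $u\in\mathcal{L}^{1}_{\mathscr{F}_{-}}$. Since $u$ is bounded by $\Gamma$ it is automatically tempered, so Proposition~\ref{pro3} applies and $\mathcal{K}(u)$ is a well-defined $\mathscr{F}_{-}$-measurable random variable with values in $\mathbb{R}^{d}_{+}$. By (H$_{1}$), $h$ is continuous and bounded by $\Gamma$ componentwise, so $\mathcal{K}^{h}(u)=h\circ\mathcal{K}(u)$ is $\mathscr{F}_{-}$-measurable and takes values in $[0,\Gamma]$, placing it back in $\mathcal{L}^{1}_{\mathscr{F}_{-}}$.

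For the contraction estimate I would first derive an $\omega$-wise bound. The mean value theorem on $\mathbb{R}^{d}_{+}$ together with (H$_{2}$) yields $|h(x)-h(y)|\leq Md\,|x-y|$ in the max norm, and the elementary bound $|\Phi v|\leq d\|\Phi\|\,|v|$ combined with (L) gives $\|\Phi(-s,\theta_{s}\omega)\|\leq R(\theta_{s}\omega)e^{\lambda s}$ for $s\leq 0$. Chaining these through the definition of $\mathcal{K}$ produces
\[
|\mathcal{K}^{h}(u_{1})(\omega)-\mathcal{K}^{h}(u_{2})(\omega)|
\leq Md^{2}\int_{-\infty}^{0} e^{\lambda s} R(\theta_{s}\omega)\,|u_{1}(\theta_{s}\omega)-u_{2}(\theta_{s}\omega)|\,ds .
\]
Taking expectations and applying Tonelli, the problem reduces to controlling $\mathbb{E}[R(\theta_{s}\omega)\,|u_{1}-u_{2}|(\theta_{s}\omega)]$ for each fixed $s\leq 0$. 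Here lies the delicate point: although $R$ is $\mathscr{F}_{+}$-measurable and hence independent of $\mathscr{F}_{-}$ by (R), the \emph{shifted} variable $R(\theta_{s}\omega)$ generally loses this $\mathscr{F}_{+}$-measurability, so one cannot simply factor the expectation. The workaround is to first invoke the measure-preserving property of $\theta_{s}$ on the joint pair, rewriting $\mathbb{E}[R(\theta_{s}\omega)\,|u_{1}-u_{2}|(\theta_{s}\omega)]=\mathbb{E}[R\cdot|u_{1}-u_{2}|]$, and only then use (R) to split it as $\|R\|_{\mathcal{L}^{1}}\|u_{1}-u_{2}\|_{\mathcal{L}^{1}}$. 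Evaluating $\int_{-\infty}^{0}e^{\lambda s}ds=1/\lambda$ delivers
\[
\|\mathcal{K}^{h}(u_{1})-\mathcal{K}^{h}(u_{2})\|_{\mathcal{L}^{1}}
\leq \frac{Md^{2}\|R\|_{\mathcal{L}^{1}}}{\lambda}\,\|u_{1}-u_{2}\|_{\mathcal{L}^{1}},
\]
which is a contraction by (H$_{2}$).

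The main obstacle I anticipate is precisely this interplay between the shift $\theta_{s}$ and the independence structure: one must resist the temptation to apply independence directly under the shift, and instead exploit $\theta_{s}$-invariance of $\mathbb{P}$ to land in the unshifted setting where (R) is available. Once this ordering is respected, the remaining steps (Fubini, the Lyapunov bound, and the pointwise Lipschitz estimate for $h$) are routine.
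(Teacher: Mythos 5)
Your proposal is correct and follows essentially the same route as the paper's proof: completeness of $\mathcal{L}^1(\Omega,\mathscr{F}_-,\mathbb{P};[0,\Gamma])$ from closedness of $[0,\Gamma]$, well-definedness via Propositions \ref{pro31} and \ref{pro3}, the mean value theorem plus the bounds $\|\Phi(-s,\theta_s\omega)\|\leq R(\theta_s\omega)e^{\lambda s}$ and $|\Phi v|\leq d\|\Phi\|\,|v|$, then Fubini. Your observation about the order of operations — first use $\theta_s$-invariance of $\mathbb{P}$ to remove the shift, and only then invoke the independence of $R$ from $\mathscr{F}_-$ to factor the expectation — is precisely how the paper executes the two final equalities in its chain of estimates.
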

\begin{proof}
Since $[0,\Gamma]$ is closed in $\mathbb{R}^d_+$, it is clear that $\|\cdot\|_{\mathcal{L}^1}$ defines a metric on $\mathcal{L}^1(\Omega,\mathscr{F}_-,\mathbb{P};[0,\Gamma])$ and the space endowed with this metric is complete.

We now turn to prove that $\mathcal{K}^h:\mathcal{L}_{\mathscr{F_-}}^1\rightarrow\mathcal{L}_{\mathscr{F_-}}^1$ is a contraction mapping.
Let us first point out that $\mathcal{K}^h:\mathcal{L}_{\mathscr{F_-}}^1\rightarrow\mathcal{L}_{\mathscr{F_-}}^1$ is well defined by Proposition
\ref{pro31} and Proposition \ref{pro3}, i.e., given any $\mathscr{F}_-$-measurable random variable $u$ in $\mathcal{L}_{\mathscr{F_-}}^1$,
$\mathcal{K}^h(u)$ is an $\mathscr{F}_-$-measurable random variable in $\mathcal{L}_{\mathscr{F_-}}^1$. Next, we observe that
\[\sup_{x\in\mathbb{R}^d_+}\|Dh(x)\|\leq M,\]
where $Dh(x)$ is the Jacobian of $h$.
Note that $|\Phi x|\leq d\|\Phi\|\cdot|x|$
for all $x\in\mathbb{R}^d$ and $\Phi\in\mathbb{R}^{d\times d}$, then for any $f_1, f_2\in\mathcal{L}^1(\Omega,\mathscr{F}_-,\mathbb{P};[0,\Gamma])$, we have
\begin{eqnarray}
\|\mathcal{K}^h(f_1)-\mathcal{K}^h(f_2)\|_{\mathcal{L}^1}
&=& \mathbb{E}|\int_0^1Dh[\mathcal{K}(f_2)+\mu(\mathcal{K}(f_1)-\mathcal{K}(f_2))]d\mu\cdot[\mathcal{K}(f_1)-\mathcal{K}(f_2)]|\nonumber\\
&\leq& d\sup_{x\in\mathbb{R}^d_+}\|Dh(x)\|\cdot\mathbb{E}|\mathcal{K}(f_1)-\mathcal{K}(f_2)|\nonumber\\
&\leq& Md\mathbb{E}\left|\int_{-\infty}^0\Phi(-s,\theta_s\omega)f_1(\theta_s\omega)ds
-\int_{-\infty}^0\Phi(-s,\theta_s\omega)f_2(\theta_s\omega)ds\right|\nonumber\\
&\leq& Md^2\mathbb{E}\int_{-\infty}^0\|\Phi(-s,\theta_s\omega)\|\cdot|f_1(\theta_s\omega)-f_2(\theta_s\omega)|ds\nonumber\\
&\leq& Md^2\mathbb{E}\int_{-\infty}^0{\rm e}^{\lambda s}R(\theta_s\omega)|f_1(\theta_s\omega)-f_2(\theta_s\omega)|ds\nonumber\\
&=& Md^2\int_{-\infty}^0{\rm e}^{\lambda s}\int_\Omega R(\theta_s\omega)|f_1(\theta_s\omega)-f_2(\theta_s\omega)|\mathbb{P}(d\omega)ds\nonumber\\
&=& Md^2\int_{-\infty}^0{\rm e}^{\lambda s}\int_\Omega R(\omega)|f_1(\omega)-f_2(\omega)|\mathbb{P}(d\omega)ds\nonumber\\
&=& Md^2\mathbb{E}R\cdot\mathbb{E}|f_1-f_2|\int_{-\infty}^0{\rm e}^{\lambda s}ds\nonumber\\
&=& \frac{Md^2\|R\|_{\mathcal{L}^1}}{\lambda}\|f_1-f_2\|_{\mathcal{L}^1},\nonumber
\end{eqnarray}
where the third-to-last equality holds because of $\theta_t\mathbb{P}=\mathbb{P}$, $t\in\mathbb{R}$, while the second-to-last equality has used the independence between $R$ and $\mathscr{F}_-$. The proof is complete.
\qquad\end{proof}

 Suppose that there exists a random equilibrium $v(\omega)$ such that $\lim_{t\rightarrow\infty}\varphi(t,\theta_{-t}\omega)x=v(\omega)$. Then $v$ must be $\mathscr{F}_-$-measurable. Thus, the most right choice is  to choose input space to be a subspace of $\mathscr{F}_-$-measurable space. Measurability with respect to $(\mathscr{B}(\mathbb{R_-})\otimes\mathscr{F}_-, \mathscr{F}_-)$ for $\theta:\mathbb{R_-}\times\Omega\mapsto\Omega$ (see Proposition \ref{pro31}) makes us to obtain $\mathscr{F}_-$-measurability for the input-to-state characteristic operator $\mathcal{K}(u)$ if $u$ is tempered and $\mathscr{F}_-$-measurable (see Proposition \ref{pro3}). By choosing $\mathcal{L}_{\mathscr{F_-}}^1$ to be the input space,   the gain operator $\mathcal{K}^h:\mathcal{L}_{\mathscr{F_-}}^1\rightarrow\mathcal{L}_{\mathscr{F_-}}^1$ is well defined. With the help of the small-gain condition {\rm (H$_2$)} and the independence between $R$ and the past $\sigma$-algebra $\mathscr{F}_-$, we get the contraction for the gain operator $\mathcal{K}^h$. Let $u$ be the fixed point of the gain operator $\mathcal{K}^h$. Then the image of the input-to-state characteristic operator $\mathcal{K}(u)$ for this fixed point will be a globally attracting positive random equilibrium for the pull-back flow of (\ref{problem}), which will be confirmed in the following small gain theorem.

\begin{theorem}[{\rm Small-gain Theorem I}]\label{thm1}
Let assumptions {\rm (A)}, {\rm (L)}, {\rm (H$_1$)}, {\rm (H$_2$)} and {\rm (R)} hold. Then there exists a unique fixed point $u\in\mathcal{L}^1(\Omega,\mathscr{F}_-,\mathbb{P};[0,\Gamma])$ for the gain operator
$\mathcal{K}^h$ such that for any $x\in\mathbb{R}^d_+$
\begin{equation}
\label{eq18}\lim_{t\rightarrow\infty}\varphi(t,\theta_{-t}\omega)x=[\mathcal{K}(u)](\omega)\quad \mathbb{P}\mbox{-a.s.}
\end{equation}
Furthermore, we have $\varphi(t,\omega)[\mathcal{K}(u)](\omega)=[\mathcal{K}(u)](\theta_t\omega)$, $\mathbb{P}$-a.s., $t>0$, i.e., $[\mathcal{K}(u)](\cdot)$ is a globally attracting positive random equilibrium.
\end{theorem}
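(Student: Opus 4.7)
The plan is to combine Banach's contraction principle (via Lemma \ref{lem5}) with the sandwich inequalities of Lemmas \ref{lem2}--\ref{lem4} to force the pull-back trajectories to converge to $\mathcal{K}(u)$, and then to verify the equivariance using the cocycle property of $\varphi$. The existence and uniqueness of a fixed point $u\in\mathcal{L}_{\mathscr{F}_-}^1$ is immediate from Lemma \ref{lem5}: since $\mathcal{L}_{\mathscr{F}_-}^1$ is complete and $\mathcal{K}^h$ is a strict contraction with factor $\frac{Md^2\|R\|_{\mathcal{L}^1}}{\lambda}<1$, Banach's fixed point theorem supplies a unique $u=\mathcal{K}^h(u)$ and, for every starting element $v\in\mathcal{L}_{\mathscr{F}_-}^1$, $(\mathcal{K}^h)^k(v)\to u$ in $\mathcal{L}^1$-norm.

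Next I would apply this to the anchor functions $\xi_\tau^h$ and $\eta_\tau^h$ from Proposition \ref{pro4}, which lie in $\mathcal{L}_{\mathscr{F}_-}^1$ by the boundedness of $h$ and the $\mathscr{F}_-$-measurability established there. Fix some $\tau\geq 0$. By Lemma \ref{lem4}, one has for every $k\in\mathbb{N}$ and every $\omega$
\begin{equation*}
(\mathcal{K}^h)^{n_k}(\xi_\tau^h)(\omega)\;\leq\;[\theta-\underline{\lim}\;h(\varphi)](\omega)\;\leq\;[\theta-\overline{\lim}\;h(\varphi)](\omega)\;\leq\;(\mathcal{K}^h)^{n_k}(\eta_\tau^h)(\omega),
\end{equation*}
where $n_k=k$ in the monotone case and $n_k=2k$ in the anti-monotone case. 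Both extreme iterates converge to $u$ in $\mathcal{L}^1$, so by extracting a common subsequence (along which $\mathcal{L}^1$-convergence upgrades to pointwise a.s. convergence) I conclude that $\theta-\underline{\lim}\;h(\varphi)=\theta-\overline{\lim}\;h(\varphi)=u$ $\mathbb{P}$-a.s. Feeding this identity into Lemma \ref{lem2} squeezes the pull-back flow between $\mathcal{K}(u)$ and $\mathcal{K}(u)$, which forces $\lim_{t\to\infty}\varphi(t,\theta_{-t}\omega)x=[\mathcal{K}(u)](\omega)$ $\mathbb{P}$-a.s.

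Finally, to show $\mathcal{K}(u)$ is a random equilibrium, I would exploit the cocycle identity $\varphi(t+s,\omega)=\varphi(t,\theta_s\omega)\circ\varphi(s,\omega)$. Writing $v(\omega):=[\mathcal{K}(u)](\omega)$ and substituting $\omega\mapsto\theta_{-t-s}\omega$ yields
\begin{equation*}
\varphi(t+s,\theta_{-t-s}\omega)x\;=\;\varphi\bigl(t,\theta_{-t}\omega\bigr)\,\varphi\bigl(s,\theta_{-s}(\theta_{-t}\omega)\bigr)x.
\end{equation*}
Letting $s\to\infty$, the left side converges a.s.\ to $v(\omega)$ and the inner factor on the right converges a.s.\ to $v(\theta_{-t}\omega)$; continuity of $\varphi(t,\cdot)$ in $x$ then gives $v(\omega)=\varphi(t,\theta_{-t}\omega)v(\theta_{-t}\omega)$ a.s., which after shifting $\omega\mapsto\theta_t\omega$ is exactly $\varphi(t,\omega)v(\omega)=v(\theta_t\omega)$ a.s.

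The main obstacle I anticipate is bridging the gap between the $\mathcal{L}^1$-norm contraction provided by Lemma \ref{lem5} and the pointwise sandwich inequalities of Lemma \ref{lem4}: the contraction argument gives convergence of $(\mathcal{K}^h)^k(\xi_\tau^h),(\mathcal{K}^h)^k(\eta_\tau^h)$ to $u$ only in mean, whereas the squeeze requires an a.s.\ statement. The standard remedy is to extract a common a.s.-convergent subsequence, exploiting that $\mathcal{L}^1$-convergence forces a subsequence to converge $\mathbb{P}$-a.s., and to fix a single null set simultaneously valid for both $\xi_\tau^h$ and $\eta_\tau^h$ iterates. A secondary subtlety is that, in the anti-monotone case, only even iterates produce order-consistent bounds, which is precisely why Lemma \ref{lem4} distinguishes the two monotonicity regimes and why the argument above uses $n_k=2k$ in that situation.
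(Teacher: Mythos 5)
Your argument follows essentially the same route as the paper's: Banach's fixed point theorem on $(\mathcal{L}_{\mathscr{F}_-}^1,\|\cdot\|_{\mathcal{L}^1})$ via Lemma \ref{lem5}, passage from $\mathcal{L}^1$-convergence of the iterates of $\xi_\tau^h,\eta_\tau^h$ to a.s.\ convergence along a common subsequence, the squeeze through Lemmas \ref{lem4} and \ref{lem2} (noting that $\mathcal{K}$ respects a.s.\ equality because $\theta_s$ preserves $\mathbb{P}$), and the cocycle limit argument for equivariance; the paper defers exactly these steps to Theorem 4.2 of \cite{JL}, and your use of $n_k=k$ versus $n_k=2k$ is an immaterial variant of the paper's uniform choice of $2k$. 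The one item of the statement you leave unaddressed is the \emph{positivity} of the random equilibrium: the paper closes by observing that $u=\mathcal{K}^h(u)=h(\mathcal{K}(u))>0$ (since $h$ takes values in $\mathbb{R}^d_+\setminus\{0\}$ by (H$_1$)) and then invoking the positivity of the linear cocycle $\Phi$ (Proposition 6.2.2 in \cite{C}) to conclude $[\mathcal{K}(u)](\omega)>0$; you should add this short step to cover the full claim.
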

\begin{proof}
Fix $\tau\geq0$. Whenever $h$ is monotone or anti-monotone, for the convenience, by
Lemma \ref{lem4}, it follows immediately that
\begin{equation}\label{eq19}(\mathcal{K}^h)^{2k}(\xi_\tau^h)\leq\theta-\underline{\lim}\;h(\varphi)\leq \theta-\overline{\lim}\;h(\varphi)\leq(\mathcal{K}^h)^{2k}(\eta_\tau^h)\quad{\rm for\ all}\ \omega\in\Omega\ {\rm and}\ k\in\mathbb{N}.
\end{equation}
Clearly, $\xi_\tau^h$ and $\eta_\tau^h$ are bounded $\mathscr{F}_-$-measurable variables in $\mathcal{L}^1(\Omega,\mathscr{F}_-,\mathbb{P};[0,\Gamma])$
by Proposition \ref{pro4}. Combining the Banach fixed point theorem \cite{Y} and Lemma \ref{lem5}, there exists a unique random variable
$u\in\mathcal{L}^1(\Omega,\mathscr{F}_-,\mathbb{P};[0,\Gamma])$ such that
\[[\mathcal{K}^h(u)](\omega)=u(\omega)\quad \mathbb{P}\mbox{-a.s.}\]
and
\begin{equation}\label{eq20}\lim_{k\rightarrow\infty}\mathbb{E}\left|(\mathcal{K}^h)^{2k}(\xi_\tau^h)-u\right|
=\lim_{k\rightarrow\infty}\mathbb{E}\left|(\mathcal{K}^h)^{2k}(\eta_\tau^h)-u\right|=0,
\end{equation}
i.e., $(\mathcal{K}^h)^{2k}(\xi_\tau^h)\xrightarrow{\mathcal{L}^1}u$ and $(\mathcal{K}^h)^{2k}(\eta_\tau^h)\xrightarrow{\mathcal{L}^1}u$,
which implies that $(\mathcal{K}^h)^{2k}(\xi_\tau^h)\xrightarrow{\mathbb{P}}u$ and $(\mathcal{K}^h)^{2k}(\eta_\tau^h)\xrightarrow{\mathbb{P}}u$.
Therefore, for any $x\in\mathbb{R}^d_+$, there exists a subsequence $\{k_j\}_{j\in\mathbb{N}}$ such that
\begin{equation}\label{eq21}\lim_{j\rightarrow\infty}[(\mathcal{K}^h)^{2k_j}(\xi_\tau^h)](\omega)
=u(\omega)=\lim_{j\rightarrow\infty}[(\mathcal{K}^h)^{2k_j}(\eta_\tau^h)](\omega)\quad
\mathbb{P}\mbox{-a.s.}
\end{equation}
The proof of (\ref{eq18}) follows by the similar arguments as in Theorem 4.2 in \cite{JL}. Finally, we show that $[\mathcal{K}(u)](\omega)>0$ for all $\omega\in\Omega$. Combining the fact that $u(\omega)=[\mathcal{K}^h(u)](\omega)>0$ for all $\omega\in\Omega$ and Proposition 6.2.2 in \cite{C}, it follows that $[\mathcal{K}(u)](\omega)>0$ for all $\omega\in\Omega$.
The proof is complete.
\qquad\end{proof}

\textsc{{\it Remark}} 2.
If $h\in C^1(\mathbb{R}^d_+,\mbox{int}\mathbb{R}^d_+)$, by Corollary 6.3.1 in \cite{C}, it is clear that $\varphi(t,\omega)(\mathbb{R}_+^d\setminus\{0\})\subset\mbox{int}\mathbb{R}_+^d$. This implies that $\varphi(t,\theta_{-t}\omega)[\mathcal{K}(u)](\theta_{-t}\omega)=[\mathcal{K}(u)](\omega)\gg0$, $\omega\in\Omega$, $t>0$. Here, $x\gg y$ means that
$x-y\in{\rm int}\mathbb{R}_+^n$ for all $x,y\in\mathbb{R}^n$.

\textsc{{\it Remark}} 3. Assume that $h\in C^1(\mathbb{R}^d_+,\mathbb{R}^d_+)$ and $h(0)=0$. Then the origin is an equilibrium for (\ref{problem}).  Theorem \ref{thm1} still holds in this case, that is, if all conditions in Theorem \ref{thm1} are satisfied except $h(x)\neq 0$, then the origin is  globally attracting. The proof is the same.

Our Theorem \ref{thm1} works for stochastic Goodwin negative feedback system, Othmer-Tyson positive feedback system and Griffith positive feedback system as well as other stochastic competitive systems with multiplicative noise. Our main task is to check  the conditions (L) and (R) in order to use Theorem \ref{thm1}, in other words, we need to choose a suitable $\lambda > 0$ and $\mathscr{F}_+$-measurable random variable  $R\in \mathcal{L}^1(\Omega,\mathscr{F}_+,\mathbb{P};\mathbb{R}_+)$ such that small-gain condition holds. During this process, the key point is to estimate the expectation of $R$.

Now we consider stochastic single loop  feedback system
\begin{equation}\label{EX4.1.0}
\left\{\begin{array}{l}
dx_1=[-\alpha_1x_1+f(x_n)]dt+\sigma_1x_1dW_t^1,\\
dx_i=[x_{i-1}-\alpha_ix_i]dt+\sigma_ix_idW_t^i,\quad 2\leq i\leq n,\end{array}\right.
\end{equation}
where $\alpha_i>0$ for $i=1,\ldots,n$ and $f\in C_b^1(\mathbb{R}_+,\mathbb{R}_+)$, i.e., $f$ and its derivative are both bounded in $\mathbb{R}_+$. Moreover, we assume that $f$ is increasing or decreasing in $\mathbb{R}_+$.  The corresponding linear homogeneous stochastic It${\rm  \hat{o}}$ type differential equations is
\begin{equation}\label{EX4.1.1}
\left\{\begin{array}{l}
dx_1=-\alpha_1x_1dt+\sigma_1x_1dW_t^1,\\
dx_i=[x_{i-1}-\alpha_ix_i]dt+\sigma_ix_idW_t^i,\quad 2\leq i\leq n.\end{array}\right.
\end{equation}
By the variation-of-constants formula, we can easily calculate the fundamental matrix $\Phi(t,\omega)$ of (\ref{EX4.1.1}) as follows.
\begin{equation}\label{EX4.1.2}
\Phi(t,\omega)=\left[\begin{array}{cccc} \Phi_{11}(t,\omega)&0&\cdots&0\\
 \Phi_{21}(t,\omega)&\Phi_{22}(t,\omega)&\cdots&0\\
 \vdots&\vdots&\ddots&\vdots\\
\Phi_{n1}(t,\omega)&\Phi_{n2}(t,\omega)&\cdots&\Phi_{nn}(t,\omega)\end{array}\right]
\end{equation}
for all $t\geq0$ and $\omega\in\Omega$, where
\begin{equation}\label{EX4.1.3}\Phi_{ii}(t,\omega)=e^{-(\alpha_i+\frac12\sigma_i^2)t+\sigma_iW_t^i(\omega)},\end{equation}
\begin{equation}\label{EX4.1.4}\Phi_{ij}(t,\omega)=\Phi_{ii}(t,\omega)\int_0^t\Phi_{ii}^{-1}(s,\omega)\Phi_{i-1,j}(s,\omega)ds,\quad 1\leq j\leq i-1,
\end{equation}
and
\[\Phi_{ij}(t,\omega)=0,\quad i+1\leq j\leq n,\]
for all $i=1,\ldots,n$.
Let $\lambda=\frac{1}{n+1}\min\{\alpha_1,\ldots,\alpha_n\}$. Then it is easy to check that
\begin{eqnarray}
\Phi_{ii}(t,\omega)\label{EX4.1.5}
&\leq&
e^{-[(n+1)\lambda+\frac12\sigma_i^2]t+\sigma_iW_t^i(\omega)}\nonumber\\
&=&
e^{-(i\lambda+\frac12\sigma_i^2)t+\sigma_iW_t^i(\omega)}e^{-(n+1-i)\lambda t}\nonumber\\
&\leq&R_i(\omega)e^{-(n+1-i)\lambda t},
\end{eqnarray}
for all $t\geq0$ and $\omega\in\Omega$, where
\begin{equation}\label{EX4.1.6}
R_i(\omega)=\sup_{t\geq0}\exp\left(-(i\lambda+\frac12\sigma_i^2)t+\sigma_iW_t^i(\omega)\right),\qquad i=1,\ldots,n.
\end{equation}
 Next, we show that $R_i(\omega)$ is a tempered random variable for all $i=1,\ldots,n$.
For any $\omega\in\Omega$ and $\gamma>0$, by (\ref{EX4.1.6}), it follows that
\begin{eqnarray}
& &\sup_{\tau\in\mathbb{R}}\left\{e^{-\gamma|\tau|}\left|R_i(\theta_\tau\omega)\right|\right\}\nonumber\\
&=&\sup_{\tau\in\mathbb{R}}\left\{e^{-\gamma|\tau|}\left[\sup_{t\geq0}\exp\left(-(i\lambda+\frac12\sigma_i^2)t+\sigma_iW_t^i(\theta_\tau\omega)\right)
\right]\right\}\nonumber\\
&=&\sup_{\tau\in\mathbb{R}}\left\{\sup_{t\geq0}\exp\left(-\gamma|\tau|-(i\lambda+\frac12\sigma_i^2)t+\sigma_iW_{t+\tau}^i(\omega)-\sigma_iW_\tau^i(\omega)\right)\right\}\nonumber\\
&\leq&\sup_{\tau\in\mathbb{R}}\left\{\sup_{t\geq0}\exp\left(-\frac{\gamma\wedge\beta_i}{2}|t+\tau|+\sigma_iW_{t+\tau}^i(\omega)\right)
\exp\left(-\frac{\gamma\wedge\beta_i}{2}|\tau|-\sigma_iW_\tau^i(\omega)\right)\right\}\nonumber\\
&\leq&\sup_{t\in\mathbb{R}}\exp\left(-\frac{\gamma\wedge\beta_i}{2}|t|+\sigma_iW_t^i(\omega)\right)
\sup_{\tau\in\mathbb{R}}\exp\left(-\frac{\gamma\wedge\beta_i}{2}|\tau|-\sigma_iW_\tau^i(\omega)\right)\nonumber\\
&<&\infty,\nonumber
\end{eqnarray}
where $\beta_i=i\lambda+\frac12\sigma_i^2$ for all $i=1,\ldots,n$ and the last inequality holds because of the law of the iterated logarithm of Brownian motions. In what follows, we claim that
\begin{equation}\label{EX4.1.7}\Phi_{ij}(t,\omega)\leq R_j(\omega)\widetilde{R}_{j+1}(\omega)\cdots\widetilde{R}_{i}(\omega)e^{-(n+1-i)\lambda t}\end{equation}
for all $1\leq j\leq i-1$, where
\begin{equation}\label{EX4.1.8}\widetilde{R}_{i}(\omega)=\int_0^\infty R_i(\theta_s\omega)e^{-\lambda s}ds,
\end{equation}
is also a tempered random variable for all $i=1,\ldots,n$. Indeed, given any $\omega\in\Omega$ and $\gamma>0$, from (\ref{EX4.1.8}),
we can see that
\begin{eqnarray}
& &\sup_{\tau\in\mathbb{R}}\left\{e^{-\gamma|\tau|}\left|\widetilde{R}_{i}(\theta_\tau\omega)\right|\right\}\nonumber\\
&=&\sup_{\tau\in\mathbb{R}}\left\{e^{-\gamma|\tau|}\int_0^\infty e^{-\lambda s}R_i(\theta_{s+\tau}\omega)ds\right\}\nonumber\\
&\leq&\sup_{\tau\in\mathbb{R}}\left\{\int_0^\infty e^{-\frac{\lambda s}{2}}e^{-\frac{\lambda\wedge\gamma}{2}|s+\tau|}R_i(\theta_{s+\tau}\omega)ds\right\}\nonumber\\
&\leq&\sup_{\tau\in\mathbb{R}}\left\{e^{-\frac{\lambda\wedge\gamma}{2}|\tau|}\left|R_i(\theta_\tau\omega)\right|\right\}\int_0^\infty e^{-\frac{\lambda s}{2}}ds\nonumber\\
&<&\infty,\nonumber
\end{eqnarray}
where the last inequality holds because of the temperedness of $R_i(\omega)$ for all $i=1,\ldots,n$.
 In order to check (\ref{EX4.1.7}), we only present the proof of $\Phi_{21}(t,\omega)$ and $\Phi_{31}(t,\omega)$, the rest can be analogously completed by induction. Combining (\ref{EX4.1.3}), (\ref{EX4.1.4}), (\ref{EX4.1.5}) and (\ref{EX4.1.6}), it is clear that
\begin{eqnarray}
\Phi_{21}(t,\omega)&=&\int_0^t\Phi_{22}(t-s,\theta_s\omega)\Phi_{11}(s,\omega)ds\nonumber\\
&\leq&
\int_0^te^{-[(n+1)\lambda+\frac12\sigma_2^2](t-s)+\sigma_2W_{t-s}^2(\theta_s\omega)}R_1(\omega)e^{-n\lambda s}ds\nonumber\\
&\leq&
\int_0^tR_2(\theta_s\omega)e^{-(n-1)\lambda(t-s)}R_1(\omega)e^{-n\lambda s}ds\nonumber\\
&\leq&e^{-(n-1)\lambda t}R_1(\omega)\int_0^\infty R_2(\theta_s\omega)e^{-\lambda s}ds\nonumber\\
&=&R_1(\omega)\widetilde{R}_{2}(\omega)e^{-(n-1)\lambda t}\nonumber
\end{eqnarray}
and
\begin{eqnarray}
\Phi_{31}(t,\omega)&=&\int_0^t\Phi_{33}(t-s,\theta_s\omega)\Phi_{21}(s,\omega)ds\nonumber\\
&\leq&
\int_0^te^{-[(n+1)\lambda+\frac12\sigma_3^2](t-s)+\sigma_3W_{t-s}^3(\theta_s\omega)}R_1(\omega)\widetilde{R}_{2}(\omega)e^{-(n-1)\lambda s}ds\nonumber\\
&\leq&
\int_0^tR_3(\theta_s\omega)e^{-(n-2)\lambda(t-s)}R_1(\omega)\widetilde{R}_{2}(\omega)e^{-(n-1)\lambda s}ds\nonumber\\
&\leq&e^{-(n-2)\lambda t}R_1(\omega)\widetilde{R}_{2}(\omega)\int_0^\infty R_3(\theta_s\omega)e^{-\lambda s}ds\nonumber\\
&=&R_1(\omega)\widetilde{R}_{2}(\omega)\widetilde{R}_{3}(\omega)e^{-(n-2)\lambda t}\nonumber
\end{eqnarray}
for all $t\geq0$ and $\omega\in\Omega$. Furthermore, we note that for all $\omega\in\Omega$, $R_i(\omega)\geq1$, which implies that $\widetilde{R}_{i}(\omega)\geq\frac1\lambda$
and
\begin{eqnarray}
\Phi_{ij}(t,\omega)&\leq&R_j(\omega)\widetilde{R}_{j+1}(\omega)\cdots\widetilde{R}_{i}(\omega)e^{-(n+1-i)\lambda t}\nonumber\\
&\leq&\lambda^{n-i}R_j(\omega)\widetilde{R}_{j+1}(\omega)\cdots\widetilde{R}_{i}(\omega)\widetilde{R}_{i+1}(\omega)\cdots\widetilde{R}_{n}(\omega)e^{-\lambda t}\nonumber\\
&\leq&\max\{1,\lambda^{n-1}\}R_j(\omega)\widetilde{R}_{j+1}(\omega)\cdots\widetilde{R}_{i}(\omega)\widetilde{R}_{i+1}(\omega)\cdots\widetilde{R}_{n}(\omega)e^{-\lambda t}\nonumber
\end{eqnarray}
for all $1\leq j\leq i-1$, $i=1,\ldots,n$. Let
\begin{equation}\label{EX4.1.9}R(\omega):=\max\{1,\lambda^{n-1}\}\bigvee_{j=1}^{n}R_j(\omega)\widetilde{R}_{j+1}(\omega)\cdots\widetilde{R}_{n}(\omega),
\end{equation}
which yields that $R(\omega)$ is tempered and
\[\|\Phi(t,\omega)\|:=\max\{|\Phi_{ij}(t,\omega)|:i,j=1,\ldots,n\}\leq R(\omega)e^{-\lambda t},\quad t\geq0,\ \omega\in\Omega.\]
Next, we will show that $R\in \mathcal{L}^1(\Omega,\mathscr{F}_+,\mathbb{P};\mathbb{R}_+)$.
In fact, for all $i=1,\ldots,n$, it is obvious that $R_i$ is $\mathscr{A}_i$-measurable, where $\mathscr{A}_i=\sigma\{\omega\mapsto W_t^i(\omega): t\geq0\}$. Consequently, $R_i$ is $\mathscr{F}_+$-measurable for all $i=1,\ldots,n$. Furthermore, for fixed $t\geq0$, $e^{-(i\lambda+\frac12\sigma_i^2)t+\sigma_i\left(W_{t+\cdot}^i(\omega)-W_\cdot^i(\omega)\right)}$ is continuous for all $\omega\in\Omega$ and
$e^{-(i\lambda+\frac12\sigma_i^2)t+\sigma_i\left(W_{t+s}^i(\cdot)-W_s^i(\cdot)\right)}$ is $\mathscr{A}_i$-measurable for all $s\geq0$ and $i=1,\ldots,n$,
which together with Lemma 3.14 in \cite{CV} implies that for any $t\geq0$
\[(s,\omega)\mapsto e^{-(i\lambda+\frac12\sigma_i^2)t+\sigma_i\left(W_{t+s}^i(\omega)-W_s^i(\omega)\right)},\quad s\geq0,\ \omega\in\Omega\]
is $(\mathscr{B}(\mathbb{R_+})\otimes\mathscr{A}_i,\mathscr{B}(\mathbb{R}_+))$-measurable. Then, by (\ref{EX4.1.6}), it follows that
\[(s,\omega)\mapsto R_i(\theta_s\omega)=\sup_{t\geq0}e^{-(i\lambda+\frac12\sigma_i^2)t+\sigma_i\left(W_{t+s}^i(\omega)-W_s^i(\omega)\right)},\quad s\geq0,\ \omega\in\Omega\]
is also $(\mathscr{B}(\mathbb{R_+})\otimes\mathscr{A}_i,\mathscr{B}(\mathbb{R}_+))$-measurable. Combining this and Fubini's theorem, it is clear that
$\widetilde{R}_i(\omega)$ is $\mathscr{A}_i$-measurable, and so is $\mathscr{F}_+$-measurable for all $i=1,\ldots,n$. The above analysis implies that
$R$ is $\mathscr{F}_+$-measurable. Now, we will prove that $R$ is $\mathcal{L}^1$-integrable. Combining the fact that an $n$-dimensional Brownian motion has $n$ independent components, (\ref{EX4.1.8}) and (\ref{EX4.1.9}), it follows that
\begin{eqnarray}\label{esti}
\mathbb{E}R&\leq&\max\{1,\lambda^{n-1}\}\sum_{j=1}^{n}\mathbb{E}\{R_j\widetilde{R}_{j+1}\cdots\widetilde{R}_{n}\}\nonumber\\
&=&\max\{1,\lambda^{n-1}\}\sum_{i=1}^{n}\frac{1}{\lambda^{n-i}}\prod_{j=i}^{n}\mathbb{E}R_j\nonumber\\
&=&\max\{1,\lambda^{n-1}\}\sum_{i=1}^{n}\frac{1}{\lambda^{n-i}}\prod_{j=i}^{n}(1+\frac{\sigma_j^2}{2j\lambda}),
\end{eqnarray}
where the last equality holds because of the property of geometric Brownian motion, i.e.,
$\mathbb{E}\sup_{t\geq0}\exp\left(-(\mu+\frac12\sigma^2)t+\sigma W_t(\omega)\right)=1+\frac{\sigma^2}{2\mu}$, where $\mu>0$ and $\sigma\in\mathbb{R}$,
see {\rm \cite[p. 585]{GP}} and {\rm \cite[p. 1639]{Pe}}.

Let $h(x)=(f(x_n),0,\ldots,0)^T$, $x\in\mathbb{R}_+^n$, $\Gamma_1=\sup_{x_n\in\mathbb{R}_+}|f(x_n)|$, $\Gamma_i=0$ for all $2\leq i\leq n$ and $M=\sup_{x_n\in\mathbb{R}_+}\frac{{\rm d}f(x_n)}{{\rm d}x_n}$. Then employing  the Small-gain Theorem I and Remark 3, we conclude the following.
\begin{corollary}\label{coro} Let $\alpha_i>0$ for $i=1,\ldots,n$ and $f\in C_b^1(\mathbb{R}_+,\mathbb{R}_+)$. Assume that $f$ is increasing or decreasing in $\mathbb{R}_+$. If
\begin{equation}\label{EX4.1.10}
\frac{Mn^2\|R\|_{\mathcal{L}^1}}{\lambda}\leq\frac{Mn^2}{\lambda}
\max\{1,\lambda^{n-1}\}\sum_{i=1}^{n}\frac{1}{\lambda^{n-i}}\prod_{j=i}^{n}(1+\frac{\sigma_j^2}{2j\lambda})<1
\end{equation}
holds, then the stochastic single loop feedback system (\ref{EX4.1.0}) admits a unique globally attracting  random equilibrium in $\mathbb{R}_+^n$.
\end{corollary}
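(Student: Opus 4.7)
The plan is to verify each of the hypotheses (A), (L), (H$_1$), (H$_2$), and (R) of Theorem \ref{thm1} for the system (\ref{EX4.1.0}) and then invoke the theorem directly (with the caveat from Remark 3 in the degenerate case $f(0)=0$). First, (\ref{EX4.1.0}) is a particular case of (\ref{problem}) with $h(x)=(f(x_n),0,\ldots,0)^T$ and a drift matrix $A$ whose only nonzero off-diagonal entries are the $1$'s on the subdiagonal, so $A$ is cooperative and (A) holds trivially. Since $f\in C_b^1(\mathbb{R}_+,\mathbb{R}_+)$ is either increasing or decreasing, $h$ is a bounded $C^1$ map from $\mathbb{R}_+^n$ into $\mathbb{R}_+^n$ that is either monotone or anti-monotone in the usual order of $\mathbb{R}_+^n$, so (H$_1$) is satisfied.

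The conditions (L) and (R) are essentially established in the discussion preceding the statement. With $\lambda=\tfrac{1}{n+1}\min\{\alpha_1,\ldots,\alpha_n\}$, the explicit formulas (\ref{EX4.1.3})--(\ref{EX4.1.4}) combined with the inductive bound (\ref{EX4.1.7}) on the off-diagonal entries yield $\|\Phi(t,\omega)\|\leq R(\omega)e^{-\lambda t}$ for the tempered random variable $R(\omega)$ defined in (\ref{EX4.1.9}), which is (L). Each $R_i$ is $\mathscr{A}_i$-measurable with $\mathscr{A}_i=\sigma\{W_t^i:t\geq 0\}$, and the same holds for each $\widetilde R_i$ by Fubini's theorem together with Lemma 3.14 in \cite{CV}, so $R$ is $\mathscr{F}_+$-measurable; by independence of the components $W^1,\ldots,W^n$ of the Brownian motion together with the independence of future and past $\sigma$-algebras of a two-sided Brownian motion, $R$ is independent of $\mathscr{F}_-$. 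Its $\mathcal{L}^1$ bound (\ref{esti}) follows from the geometric Brownian motion identity $\mathbb{E}\sup_{t\geq0}\exp(-(\mu+\tfrac12\sigma^2)t+\sigma W_t)=1+\tfrac{\sigma^2}{2\mu}$ together with the independence of the $R_j$'s across $j$, verifying (R).

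It remains to check (H$_2$). The Jacobian $Dh(x)$ has a single nonzero entry $(Dh)_{1n}=f'(x_n)$, so in the notation of Lemma \ref{lem5} one has $M=\sup_{x_n\in\mathbb{R}_+}|f'(x_n)|$ and $d=n$. Plugging the bound (\ref{esti}) for $\mathbb{E}R$ into the small-gain quantity yields
\[
\frac{Md^2\|R\|_{\mathcal{L}^1}}{\lambda}\leq\frac{Mn^2}{\lambda}\max\{1,\lambda^{n-1}\}\sum_{i=1}^{n}\frac{1}{\lambda^{n-i}}\prod_{j=i}^{n}\left(1+\frac{\sigma_j^2}{2j\lambda}\right),
\]
which by the hypothesis (\ref{EX4.1.10}) is strictly less than one, so (H$_2$) holds. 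Applying Theorem \ref{thm1} (or Remark 3 if $h$ can vanish on $\mathbb{R}_+^n$) then delivers the unique globally attracting random equilibrium in $\mathbb{R}_+^n$ for the pull-back flow of (\ref{EX4.1.0}).

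The main technical difficulty is the inductive bookkeeping in (\ref{EX4.1.7}) and the identification of the correct $\mathcal{L}^1$-dominator $R(\omega)$ simultaneously satisfying temperedness, the exponential decay estimate, and $\mathscr{F}_+$-measurability; once this is in place, the corollary is a short bookkeeping application of Theorem \ref{thm1}, so in the formal write-up I would present the proof as a concise verification of each hypothesis with pointers back to the preceding construction.
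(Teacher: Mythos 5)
Your proposal is correct and follows essentially the same route as the paper: the paper's own justification consists precisely of the construction of $\Phi(t,\omega)$, the bounds (\ref{EX4.1.5}) and (\ref{EX4.1.7}), the temperedness and $\mathscr{F}_+$-measurability of $R(\omega)$ from (\ref{EX4.1.9}), the expectation bound (\ref{esti}), and then a direct appeal to Theorem \ref{thm1} together with Remark 3. Your verification of (A), (H$_1$), (L), (R) and (H$_2$), including the identification $M=\sup_{x_n}|f'(x_n)|$ and $d=n$, matches the paper's argument step for step.
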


\
{\bf Example 4.1. (Stochastic Goodwin System)} Consider $n$-dimensional {\it stochastic Goodwin negative feedback system}
\begin{equation}\label{EX4.2.1}
\left\{\begin{array}{l}
dx_1=[-\alpha_1x_1+\frac{V}{K+x_n^m}]dt+\sigma_1x_1dW_t^1,\\
dx_i=[x_{i-1}-\alpha_ix_i]dt+\sigma_ix_idW_t^i,\quad 2\leq i\leq n,\end{array}\right.
\end{equation}
where $m>1$, $K>1$, $V>0$ and $\alpha_i>0$ for $i=1,\ldots,n$. It is clear that (\ref{EX4.2.1}) is a non-monotone stochastic system, which can be regarded as the stochastic Goodwin model, see \cite{G,HTW}.
Moreover, an easy computation shows that
\[M=\sup_{x_n\in\mathbb{R}_+}\left\{\frac{mVx_n^{m-1}}{(K+x_n^m)^2}\right\}
\leq\sup_{x_n\in\mathbb{R}_+}\left\{\frac{mV(1+x_n^{m})}{(K+x_n^m)^2}\right\}
\leq\frac{mV}{K}.\]
Applying Corollary \ref{coro}, we get that if
\begin{equation}\label{EX4.1.10g}
\frac{mn^2V}{\lambda K}
\max\{1,\lambda^{n-1}\}\sum_{i=1}^{n}\frac{1}{\lambda^{n-i}}\prod_{j=i}^{n}(1+\frac{\sigma_j^2}{2j\lambda})<1
\end{equation}
is satisfied, then stochastic Goodwin negative feedback system (\ref{EX4.2.1}) possesses a globally attracting nontrivial random equilibrium.
Here, (\ref{EX4.1.10g}) holds for $V$ sufficiently small or $K$ sufficiently large. Moreover, we can have that the unique random equilibrium
$\mathcal{K}(u)$ is strongly positive, i.e., $[\mathcal{K}(u)](\omega)\gg0$ for all $\omega\in\Omega$. Noting that $f(x_n)=\frac{V}{K+x_n^m}>0$ for all $x_n\in\mathbb{R}_+$, it follows that
$u(\omega)=[\mathcal{K}^h(u)](\omega)=(u_1(\omega),0,\ldots,0)$ and $u_1(\omega)>0$ for all $\omega\in\Omega$. Combining (\ref{EX4.1.2}), (\ref{EX4.1.3})
and (\ref{EX4.1.4}), it is clear that given any $t>0$, $\Phi_{i1}(t,\omega)>0$ for all $i=1,\ldots,n$ and $\omega\in\Omega$. This together with the definition of $\mathcal{K}$ implies that $[\mathcal{K}(u)](\omega)\gg0$ for all $\omega\in\Omega$.

{\bf Example 4.2. (Stochastic Othmer-Tyson System)} Consider the following $n$-dimensional {\it stochastic Othmer-Tyson positive feedback system}:
\begin{equation}\label{EX4.3a.1}
\left\{\begin{array}{l}
dx_1=[-\alpha_1x_1+\frac{k_0(1+x_n^m)}{K+x_n^m}]dt+\sigma_1x_1dW_t^1,\\
dx_i=[x_{i-1}-\alpha_ix_i]dt+\sigma_ix_idW_t^i,\quad 2\leq i\leq n,\end{array}\right.
\end{equation}
where $k_0>0$, $K>1$, $m>1$ and $\alpha_i>0$ for $i=1,\ldots,n$. This model can be found in \cite{Ot,TO}, which is a stochastic cooperative system.
By the direct calculation, it is obvious that
\[M=\sup_{x_n\in\mathbb{R}_+}\left\{\frac{mk_0(K-1)x_n^{m-1}}{(K+x_n^m)^2}\right\}
\leq\sup_{x_n\in\mathbb{R}_+}\left\{\frac{mk_0(K-1)(1+x_n^m)}{(K+x_n^m)^2}\right\}\leq\frac{mk_0(K-1)}{K}.\]
As long as the small-gain condition
\begin{equation}\label{EX4.1.10t}
\frac{mk_0n^2(K-1)}{\lambda K}
\max\{1,\lambda^{n-1}\}\sum_{i=1}^{n}\frac{1}{\lambda^{n-i}}\prod_{j=i}^{n}(1+\frac{\sigma_j^2}{2j\lambda})<1
\end{equation}
holds, stochastic Othmer-Tyson positive feedback system (\ref{EX4.3a.1}) possesses a unique globally attracting nontrivial random equilibrium for pull-back flow by Corollary \ref{coro}. It is easy to see that the small-gain condition (\ref{EX4.1.10t}) is true for $k_0$ small enough. Furthermore, the strong positivity of the unique random equilibrium $\mathcal{K}(u)$  can be obtained by the same argument in Example 4.1.

{\bf Example 4.3. (Stochastic Griffith System)} Next, we study the following $n$-dimensional {\it stochastic Griffith positive feedback system}:
\begin{equation}\label{EX4.1.11}
\left\{\begin{array}{l}
dx_1=[-\alpha_1x_1+\frac{Kx_n^m}{1+Kx_n^m}]dt+\sigma_1x_1dW_t^1,\\
dx_i=[x_{i-1}-\alpha_ix_i]dt+\sigma_ix_idW_t^i,\quad 2\leq i\leq n,\end{array}\right.
\end{equation}
where $m>1$, $K>0$ and $\alpha_i>0$ for $i=1,\ldots,n$, see \cite{Gr}. An easy computation shows that
\begin{eqnarray}M
&=&\sup_{x_n\in\mathbb{R}_+}\left\{\frac{mKx_n^{m-1}}{(1+Kx_n^m)^2}\right\}=\frac{mKx_n^{m-1}}{(1+Kx_n^m)^2}\bigg|_{x_n^m=\frac{m-1}{K(m+1)}}\nonumber\\
&=&\frac{\sqrt[m]{K}(m+1)^2}{4m}\left(\frac{m-1}{m+1}\right)^{\frac{m-1}{m}}.\nonumber
\end{eqnarray}
If $K$ is small enough, the small-gain condition
\begin{equation}\label{EX4.1.12}
\frac{\sqrt[m]{K}n^2(m+1)^2}{4m\lambda}\left(\frac{m-1}{m+1}\right)^{\frac{m-1}{m}}
\max\{1,\lambda^{n-1}\}\sum_{i=1}^{n}\frac{1}{\lambda^{n-i}}\prod_{j=i}^{n}(1+\frac{\sigma_j^2}{2j\lambda})<1
\end{equation}
holds. Using Corollary \ref{coro} and Remark 3, it follows that the zero solution is the unique globally attracting random equilibrium for stochastic Griffith positive feedback system (\ref{EX4.1.11}).

{\bf Example 4.4.} We consider an $n$-dimensional {\it stochastic competitive system}
\begin{equation}\label{EX4.3b.1}dx_i=[-\alpha_ix_i+h_i(x)]dt+\sigma_ix_idW_t^i,\end{equation}
where $\alpha_i>0$ for all $i=1,\ldots,n$ and
\begin{equation}\label{EX4.3b.2}h_i(x):=\frac{1}{K_i+x_1^m+\ldots+x_n^m},\qquad x\in\mathbb{R}_+^n,\ i=1,\ldots,n,\end{equation}
where $m>1$ and $K_i>1$ for all $i=1,\ldots,n$.
Then, $h$ is a $C^1$-decreasing function from $\mathbb{R}_+^n$ to $\mathbb{R}_+^n\backslash\{0\}$.
It follows immediately that (\ref{EX4.3b.1}) is a stochastic competitive system.
By the direct computation, we obtain
\begin{equation}\label{EX4.3b.3}
\Phi(t,\omega)=\left[\begin{array}{cccc} \Phi_{11}(t,\omega)&0&\cdots&0\\
0&\Phi_{22}(t,\omega)&\cdots&0\\
 \vdots&\vdots&\ddots&\vdots\\
0&0&\cdots&\Phi_{nn}(t,\omega)\end{array}\right]
\end{equation}
for all $t\geq0$ and $\omega\in\Omega$, where
\begin{equation}\label{EX4.3b.4}\Phi_{ii}(t,\omega)=e^{-(\alpha_i+\frac12\sigma_i^2)t+\sigma_iW_t^i(\omega)}.\end{equation}
Consequently, it is evident that
\begin{equation}\label{EX4.3b.5}\|\Phi(t,\omega)\|:=\max\{|\Phi_{ij}(t,\omega)|:i,j=1,\ldots,n\}\leq R(\omega)e^{-\lambda t},\quad t\geq0,\ \omega\in\Omega,\end{equation}
where $\lambda=\frac12\min\{\alpha_1,\ldots,\alpha_n\}$ and
\begin{equation}\label{EX4.3b.6}
R(\omega)=\bigvee_{i=1}^{n}\sup_{t\geq0}\exp\left(-(\lambda+\frac12\sigma_i^2)t+\sigma_iW_t^i(\omega)\right).
\end{equation}
It follows that $R(\omega)$ is $\mathscr{F}_+$-measurable and independent of $\mathscr{F}_-$.
Similar to the analysis for system (\ref{EX4.1.0}), we claim that $R(\omega)$ is tempered and belongs to $ \mathcal{L}^1(\Omega,\mathscr{F}_+,\mathbb{P};\mathbb{R}_+)$. In fact, a simple calculation shows that
\begin{eqnarray}
\mathbb{E}R
&\leq&\sum_{i=1}^{n}\mathbb{E}\sup_{t\geq0}\exp\left(-(\lambda+\frac12\sigma_i^2)t+\sigma_iW_t^i(\omega)\right)\nonumber\\
&=&\sum_{i=1}^{n}(1+\frac{\sigma_i^2}{2\lambda}).\nonumber
\end{eqnarray}
Furthermore, we see that
\begin{eqnarray}M&=&\max\left\{\sup_{x\in\mathbb{R}^n_+}|\frac{\partial h_i(x)}{\partial x_j}|, i,j=1,\ldots,n\right\}\nonumber\\
&=&\max\left\{\sup_{x\in\mathbb{R}^n_+}\frac{mx_j^{m-1}}{(K_i+x_1^m+\ldots+x_n^m)^2}, i,j=1,\ldots,n\right\}\nonumber\\
&\leq&\frac{m}{K_i}\leq\frac{m}{K},\nonumber
\end{eqnarray}
where $K=\min\{K_i, i=1,\ldots,n\}$. Therefore, the small-gain condition is
\[\frac{Mn^2\|R\|_{\mathcal{L}^1}}{\lambda}
\leq\frac{mn^2}{\lambda K}\sum_{i=1}^{n}(1+\frac{\sigma_i^2}{2\lambda})<1,\]
 which must hold when $\lambda$ or $K$ is large enough.
According to the Small-gain Theorem I and Remark 2, stochastic
competitive system (\ref{EX4.3b.1}) admits a unique globally asymptotically stable positive random equilibrium, to which all the  pull-back trajectories of (\ref{EX4.3b.1}) converge.

\

\textsc{{\it Remark}} 4. Observe that our small-gain condition involves in the expectation of the random variable $R(\omega)$. So we need to provide the exact representation of $R(\omega)$ in  the definition (L) of the top Lyapunov exponent.   We point out that the choice of $\lambda$ and $R$ in the condition (L) is not unique. Usually, the bigger $\lambda$ makes $\mathbb{E}R$ bigger. As for the linear homogeneous stochastic  differential equations (\ref{EX4.1.1}), we choose $\lambda=\frac{1}{n+1}\min\{\alpha_1,\ldots,\alpha_n\}$ and $R$ as (\ref{EX4.1.9}) whose estimation is given in (\ref{esti}). We note that during this process we lose a lot. For concrete example, even if our small-gain condition (\ref{EX4.1.10}) fails, we can trace our idea by choosing other  $\lambda$ and $R$ such that small-gain theorem still holds, which is shown in the following three-dimensional {\it stochastic Othmer-Tyson positive feedback system}
\begin{equation}\label{EX4.3c.1}
\left\{\begin{array}{l}
dx_1=[-8x_1+\frac{1}{6}\cdot\frac{1+x_3^3}{\frac43+x_3^3}]dt+\frac12x_1dW_t^1,\\
dx_2=[x_1-9x_2]dt+\frac14x_2dW_t^2,\\
dx_3=[x_2-10x_3]dt+\frac13x_3dW_t^3.\end{array}\right.
\end{equation}
In fact, an easy computation shows that
\[M=\sup_{x_n\in\mathbb{R}_+}\left\{\frac{mk_0(K-1)x_n^{m-1}}{(K+x_n^m)^2}\right\}=
\frac{x_n^2}{6(\frac43+x_n^3)^2}\bigg|_{x_n^3=\frac23}
=\frac{1}{24}\left(\frac23\right)^{2/3}\]
In Corollary \ref{coro}, $\lambda = 2$ in this case. Thus
\[\max\{1,\lambda^{2}\}\sum_{i=1}^{3}\frac{1}{\lambda^{3-i}}\prod_{j=i}^{3}(1+\frac{\sigma_j^2}{2j\lambda})
\Big|_{\lambda=2}\geq2^2+2+1=7,\]
which implies that $\frac{1}{24}\left(\frac23\right)^{2/3}\times\frac92\times7>1$. That is, the small-gain condition (\ref{EX4.1.10}) does not work.
In what follows, we will prove that the small-gain condition (\ref{EX4.1.10}) can hold by changing the choice of $\lambda$ and $R$ suitably.
It is clear that
\[\Phi_{11}(t,\omega)=e^{(-8-\frac18)t+\frac12W_t^1(\omega)},\]
\[\Phi_{22}(t,\omega)=e^{(-9-\frac{1}{32})t+\frac14W_t^2(\omega)},\]
\[\Phi_{33}(t,\omega)=e^{(-10-\frac{1}{18})t+\frac13W_t^3(\omega)},\]
and
\[\Phi_{21}(t,\omega)=\int_0^te^{(-9-\frac{1}{32})(t-s)+\frac14\left(W_t^2(\omega)-W_s^2(\omega)\right)}\Phi_{11}(s,\omega)ds,\]
\[\Phi_{3i}(t,\omega)=\int_0^te^{(-10-\frac{1}{18})(t-s)+\frac13\left(W_t^3(\omega)-W_s^3(\omega)\right)}\Phi_{2i}(s,\omega)ds,\qquad i=1,2.\]
Hence, it is easy to check that
\begin{equation}\label{EX4.3c.2}\Phi_{ii}(t,\omega)\leq R_i(\omega)e^{-(4-i)t},\quad i=1,2,3,\end{equation}
for all $t\geq0$ and $\omega\in\Omega$, where
\[
R_1(\omega)=\sup_{t\geq0}\exp\left((-5-\frac18)t+\frac12W_t^1(\omega)\right),\ R_2(\omega)=\sup_{t\geq0}\exp\left((-7-\frac{1}{32})t+\frac14W_t^2(\omega)\right)\]
and
\[
R_3(\omega)=\sup_{t\geq0}\exp\left((-9-\frac{1}{18})t+\frac13W_t^3(\omega)\right).
\]
It follows that
\begin{eqnarray}
\Phi_{21}(t,\omega)
&=&\int_0^te^{(-9-\frac{1}{32})(t-s)+\frac14W_{t-s}^2(\theta_s\omega)}\Phi_{11}(s,\omega)ds\nonumber\\
&\leq&e^{-2t}R_1(\omega)\int_0^te^{-s}e^{(-7-\frac{1}{32})(t-s)+\frac14W_{t-s}^2(\theta_s\omega)}ds\nonumber\\
&\leq&e^{-2t}R_1(\omega)\int_0^\infty e^{-s}R_2(\theta_s\omega)ds\nonumber\\
&=&e^{-2t}R_1(\omega)\widetilde{R}_2(\omega),\nonumber
\end{eqnarray}
\begin{eqnarray}
\Phi_{31}(t,\omega)
&\leq&e^{-t}R_1(\omega)\widetilde{R}_2(\omega)\int_0^\infty e^{-s}R_3(\theta_s\omega)ds\nonumber\\
&=&e^{-t}R_1(\omega)\widetilde{R}_2(\omega)\widetilde{R}_3(\omega),\nonumber
\end{eqnarray}
and
\begin{eqnarray}
\Phi_{32}(t,\omega)
&\leq&e^{-t}R_2(\omega)\int_0^\infty e^{-s}R_3(\theta_s\omega)ds\nonumber\\
&=&e^{-t}R_2(\omega)\widetilde{R}_3(\omega).\nonumber
\end{eqnarray}
In order to verify the small-gain condition, we choose $\lambda=1$ and
\begin{eqnarray}
R(\omega)&=&R_1(\omega)\bigvee R_2(\omega)\bigvee R_3(\omega)
\bigvee R_1(\omega)\widetilde{R}_2(\omega)\bigvee R_1(\omega)\widetilde{R}_2(\omega)\widetilde{R}_3(\omega)\bigvee R_2(\omega)\widetilde{R}_3(\omega)\nonumber\\
&=&R_3(\omega)\bigvee R_2(\omega)\widetilde{R}_3(\omega)\bigvee R_1(\omega)\widetilde{R}_2(\omega)\widetilde{R}_3(\omega),\nonumber
\end{eqnarray}
where the last equality holds based on the fact that for all $\omega\in\Omega$, $\widetilde{R}_i(\omega)\geq1$, $i=2,3$.
Consequently, it is obvious that
\begin{eqnarray}
\mathbb{E}R
&\leq&\mathbb{E}R_3
+\mathbb{E}R_1\cdot\mathbb{E}R_2\cdot\mathbb{E}R_3
+\mathbb{E}R_2\cdot\mathbb{E}R_3\nonumber\\
&=&\frac{163}{162}+\frac{41}{40}\cdot\frac{225}{224}\cdot\frac{163}{162}+\frac{225}{224}\cdot\frac{163}{162}\nonumber\\
&<&3.0528.\nonumber
\end{eqnarray}
Therefore, we have that
\[\frac{Mn^2\|R\|_{\mathcal{L}^1}}{\lambda}
\leq\frac{1}{24}\left(\frac23\right)^{2/3}\times9\times3.0528<1.\]
That is, the small-gain condition (\ref{EX4.1.10}) holds.
This reveals that the choice of $\lambda$ and $R$ plays a key role in the proof of our result.

\
\textsc{{\it Remark}} 5. According to Chueshov {\rm \cite[p. 221]{C}},  the stochastic Othmer-Tyson positive feedback system (\ref{EX4.3a.1}) with $m=1$ is sublinear and admits a globally asymptotically attracting positive random equilibrium. As far as we know, all other results in Examples 4.1-4.4 are new.

\subsection{Type two: $h$ is uniformly bounded away from zero}
In this subsection, we will prove a small-gain theorem in the case that $h$ is uniformly bounded away from zero and present some examples.
First, we give some notations and preliminaries. Let $V$ be a real Banach space, a closed subset $V_+\subset V$ is said to be a
{\it cone} if $V_+$ is convex and $\alpha V_+\subset V_+$ for all $\alpha\in\mathbb{R}_+$, and $V_+\cap(-V_+)=\{0\}$.
We denote a partial order on $V$ by $x\leq y$ if $y-x\in V_+$, which is compatible with the structure of linear vector space $V$. A cone $V_+$ is said to be {\it solid} if it has nonempty interior points ${\rm int}V_+$. A cone $V_+$ is said to be {\it normal} if there exists a constant $c>0$ such that $\|x\|\leq c\|y\|$ whenever  $0\leq x\leq y$. Next, we will introduce definitions of {\it part} and {\it part (Birkhoff) metric}.
\begin{definition}
{\rm \bf (Part (Birkhoff) Metric)}
\begin{enumerate}[{\rm (i)}]
\item An equivalence relation is defined by $x\sim y$ if there exists $c>0$ such that $c^{-1}x\leq y\leq cx$, then
the equivalence classes on the cone $V_+$ are called the parts of $V_+$;
\item Let $C$ be any nonzero part of $V_+$. Then
\begin{equation}\label{eq22}
p(x,y):={\rm inf}\{\log c: c^{-1}x\leq y\leq cx\},\quad x,y\in C,
\end{equation}
is called the part metric (or Birkhoff metric) of $C$.
\end{enumerate}
\end{definition}
It is clear that ${\rm int}V_+$ is a part and any part is a cone in $V$.
Let $\mathcal{L}^\infty(\Omega;\mathbb{R}^d):=\mathcal{L}^\infty(\Omega,\mathscr{F},\mathbb{P};\mathbb{R}^d)$ denote the Banach space of $\mathbb{R}^d$-valued, $\mathscr{F}$-measurable, essentially bounded functions defined on $\Omega$ almost surely with the essential supremum
norm $\|f\|_\infty:=\inf\{B:|f|\leq B\ \mathbb{P}\mbox{-a.s.}\}$. The nonnegative functions in $\mathcal{L}^\infty(\Omega;\mathbb{R}^d)$ form a normal, solid cone $\mathcal{L}^\infty_+(\Omega;\mathbb{R}^d)$, see \cite[Section 1.5 and 5.2]{KLS}, where
${\rm int}\mathcal{L}^\infty_+(\Omega;\mathbb{R}^d)=
\{f: {\rm there\ extis}\ \epsilon=(\epsilon_1,\ldots,\epsilon_d)\in{\rm int}\mathbb{R}_+^d\ {\rm such\ that}
f\geq\epsilon\ \mathbb{P}\mbox{-a.s.}\}$, which consists of the family of functions essentially bounded away from zero.
To prove our main results, we start with a lemma.

\begin{lemma}\label{lem6}
Let assumptions {\rm (A)}, {\rm (L)} and {\rm (H$_1$)} hold. Assume additionally that
\begin{enumerate}[{\rm ({H}$_3$)}]
\item
$h:\mathbb{R}_+^d\rightarrow[\delta,\Gamma]\subset{\rm int}\mathbb{R}_+^d$, where $\delta=(\delta_1,\ldots,\delta_d)\gg0$.
That is, $h$ is uniformly bounded away from zero. Furthermore, we assume that
there exists a constant $T>1$ such that
$h_T$ is sublinear, i.e.,
\[\lambda h_T(x)\leq h_T(\lambda x)\quad {\rm for\ all}\ x\in\mathbb{R}_+^d\ {\rm and}\
\lambda\in[0,1],\]
where $h_T(x)=h(x)-\frac1T\delta$, $x\in\mathbb{R}_+^d$,
or there exists a constant $S>1$ such that
$h_S^{-1}$ is sublinear, i.e.,
\[\lambda h^{-1}_S(x)\leq h^{-1}_S(\lambda x)\quad {\rm for\ all}\ x\in\mathbb{R}_+^d\ {\rm and}\
\lambda\in[0,1],\]
where $h^{-1}_S(x)=(\frac{1}{h_1(x)}-\frac{1}{S\Gamma_1},\ldots,\frac{1}{h_d(x)}-\frac{1}{S\Gamma_d})$, $x\in\mathbb{R}_+^d$.
\end{enumerate}
The gain operator
$\mathcal{K}^h=h\circ\mathcal{K}:{\rm int}\mathcal{L}^\infty_+(\Omega;\mathbb{R}^d)\rightarrow{\rm int}\mathcal{L}^\infty_+(\Omega;\mathbb{R}^d)$ is defined by
\[[\mathcal{K}^h(u)](\omega)=\left[h_i\left(\int_{-\infty}^0\Phi(-s,\theta_s\omega)u(\theta_s\omega)ds\right)\right]_{i=1}^d,\quad
u\in{\rm int}\mathcal{L}^\infty_+(\Omega;\mathbb{R}^d),\]
where $u$ is the representative such that $u$ is bounded for all $\omega\in\Omega$.
Then ${\rm int}\mathcal{L}^\infty_+(\Omega;\mathbb{R}^d)$ is complete with respect to the Birkhoff metric $p$. Moreover, the gain operator
$\mathcal{K}^h=h\circ\mathcal{K}:({\rm int}\mathcal{L}^\infty_+(\Omega;\mathbb{R}^d),p)\rightarrow({\rm int}\mathcal{L}^\infty_+(\Omega;\mathbb{R}^d),p)$  is a contraction mapping.
\end{lemma}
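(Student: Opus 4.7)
The plan has three ingredients: (i) completeness of the part of a normal, solid cone under its Birkhoff metric; (ii) the image of $\mathcal{K}^h$ lies in the order-interval $[\delta,\Gamma]$, which has bounded $p$-diameter; and (iii) the sublinearity in (H$_3$), combined with the uniform lower bound $\delta\gg 0$, converts an additive slack into a genuine multiplicative contraction factor. For (i) I would invoke a standard Thompson-type argument: since $\mathcal{L}^\infty_+(\Omega;\mathbb{R}^d)$ is solid and normal, a $p$-Cauchy sequence $\{u_n\}$ satisfies $e^{-p(u_n,u_{n_0})}u_{n_0}\leq u_n\leq e^{p(u_n,u_{n_0})}u_{n_0}$ a.s.\ for a fixed term $u_{n_0}$, so normality yields $\|\cdot\|_\infty$-convergence to a limit $u^\ast$ that inherits the two-sided multiplicative bound and hence lies in ${\rm int}\mathcal{L}^\infty_+$ with $p(u_n,u^\ast)\to 0$.

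For (ii)--(iii), take $u,v\in{\rm int}\mathcal{L}^\infty_+(\Omega;\mathbb{R}^d)$ and set $c:=e^{p(u,v)}$, so that $c^{-1}u\leq v\leq c\,u$ a.s. Linearity and nonnegativity of the kernel $\Phi(-s,\theta_s\omega)$ propagate the sandwich to $c^{-1}\mathcal{K}(u)\leq\mathcal{K}(v)\leq c\,\mathcal{K}(u)$, and in the monotone case $h(c^{-1}\mathcal{K}(u))\leq h(\mathcal{K}(v))\leq h(c\,\mathcal{K}(u))$. Applying sublinearity of $h_T$ with $\lambda=c^{-1}$, once at $x=\mathcal{K}(u)$ and once at $x=c\,\mathcal{K}(u)$, yields
\begin{equation*}
c^{-1}h(\mathcal{K}(u))+(1-c^{-1})\delta/T\leq h(\mathcal{K}(v))\leq c\,h(\mathcal{K}(u))-(c-1)\delta/T.
\end{equation*}
Since $h_i(\mathcal{K}(u))\leq\Gamma_i$ by (H$_3$), one has $\delta_i/T\geq\mu\, h_i(\mathcal{K}(u))$ componentwise with $\mu:=\min_i\delta_i/(T\Gamma_i)\in(0,1)$. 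An elementary manipulation reducing to $(c-1)^2\geq 0$ then converts the display above into
\begin{equation*}
((1-\mu)c+\mu)^{-1}h(\mathcal{K}(u))\leq h(\mathcal{K}(v))\leq ((1-\mu)c+\mu)\,h(\mathcal{K}(u)),
\end{equation*}
so $p(\mathcal{K}^h(u),\mathcal{K}^h(v))\leq\log((1-\mu)c+\mu)$.

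To upgrade this to a uniform contraction constant $\alpha<1$, I would combine it with the \emph{a priori} bound $p(\mathcal{K}^h(u),\mathcal{K}^h(v))\leq D:=\log\max_i(\Gamma_i/\delta_i)$, which follows from $\mathcal{K}^h({\rm int}\mathcal{L}^\infty_+)\subset[\delta,\Gamma]$ by (H$_3$). Set $f(r):=\log((1-\mu)e^r+\mu)/r$; then $f$ is continuous on $(0,\infty)$, $f(0^+)=1-\mu$, and $f(r)<1$ for all $r>0$ (because $(1-\mu)e^r+\mu<e^r$ iff $r>0$). Fixing any $r_0>D$, continuity on the compact interval $[0,r_0]$ delivers $\sup_{[0,r_0]}f\leq\alpha_1<1$, while for $r\geq r_0$ the trivial bound forces $D/r\leq D/r_0<1$; setting $\alpha:=\max(\alpha_1,D/r_0)$ gives the desired Banach constant in $(0,1)$. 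The anti-monotone case is handled identically after composing with the componentwise reciprocal $1/h$, which is monotone, and using sublinearity of $h_S^{-1}$ in place of that of $h_T$.

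The main obstacle is exactly this uniformity: the sublinearity estimate on its own delivers only a strict but $r$-dependent inequality, with $f(r)\to 1$ as $r\to\infty$, so no Banach contraction constant can be read off from (iii) in isolation. The resolution is the boundedness of the image: because $\mathcal{K}^h$ sends everything into $[\delta,\Gamma]$, the auxiliary bound $D$ handles the large-$r$ regime while sublinearity handles the small-$r$ regime. Thus the positive lower bound $\delta\gg 0$ in (H$_3$) is doubly essential—it supplies the multiplicative gain $\mu$ in (iii) and keeps the diameter $D$ finite in (ii)—and without it neither piece of the argument survives.
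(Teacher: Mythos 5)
Your proposal is correct, and its overall architecture coincides with the paper's: completeness of $({\rm int}\mathcal{L}^\infty_+,p)$ via normality of the cone, well-definedness from $\mathcal{K}^h({\rm int}\mathcal{L}^\infty_+)\subset[\delta,\Gamma]$, and a contraction estimate extracted from sublinearity plus the uniform lower bound $\delta\gg0$, with the second alternative of (H$_3$) reduced to the first via the componentwise reciprocal and the identity $p(x,y)=p(1/x,1/y)$. Where you diverge is in how the contraction constant is produced. The paper factors the estimate as
\[
p\bigl(H(u),H(v)\bigr)=p\bigl(\tfrac{\delta}{T}+(H(u)-\tfrac{\delta}{T}),\,\tfrac{\delta}{T}+(H(v)-\tfrac{\delta}{T})\bigr)\leq L_1\,p\bigl(H(u)-\tfrac{\delta}{T},H(v)-\tfrac{\delta}{T}\bigr)\leq L_1\,p(u,v),
\]
citing \cite[Lemma 5.2]{FS3} or \cite[Theorem 2.6]{N} for the fact that translation by the fixed interior element $\delta/T$ is a strict $p$-contraction with constant $L_1(\delta,\Gamma,T)<1$ on the relevant order interval, and using that the order-preserving sublinear map $H-\delta/T$ is $p$-nonexpansive. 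You instead inline a self-contained proof of essentially that cited lemma: the explicit sandwich $((1-\mu)c^{-1}+\mu)\,h(\mathcal{K}(u))\leq h(\mathcal{K}(v))\leq((1-\mu)c+\mu)\,h(\mathcal{K}(u))$ with $\mu=\min_i\delta_i/(T\Gamma_i)$, followed by the two-regime patching with the diameter bound $D=\log\max_i(\Gamma_i/\delta_i)$ to turn the $r$-dependent (and asymptotically degenerate) factor $\log((1-\mu)e^r+\mu)/r$ into a uniform Banach constant. The paper's route is shorter and modular; yours is elementary, produces an explicit constant $\max(\alpha_1,D/r_0)$, and correctly isolates the point the paper leaves tacit, namely that the passage from sublinearity to nonexpansiveness of $H-\delta/T$ also requires its order-preservation (supplied by (H$_1$)), which is why the two alternatives of (H$_3$) are paired with $h$ monotone and $h$ anti-monotone respectively. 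Your diagnosis that $\delta\gg0$ is doing double duty (supplying $\mu$ and keeping $D$ finite) is exactly the content of the dependence $L_1=L_1(\delta,\Gamma,T)$ in the cited lemma.
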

\begin{proof}
For any $0\leq x\leq y$, $x,y\in\mathcal{L}^\infty_+(\Omega;\mathbb{R}^d)$, it is easily seen that $\|x\|_\infty\leq\|y\|_\infty$, which yields that
$\mathcal{L}^\infty_+(\Omega;\mathbb{R}^d)$ is a normal cone. Then, it follows that $({\rm int}\mathcal{L}^\infty_+(\Omega;\mathbb{R}^d),p)$ is a complete metric space, see Proposition 3.1.1 in \cite{C} or \cite{KLS}.

Now, we will show that $\mathcal{K}^h=h\circ\mathcal{K}:({\rm int}\mathcal{L}^\infty_+(\Omega;\mathbb{R}^d),p)\rightarrow({\rm int}\mathcal{L}^\infty_+(\Omega;\mathbb{R}^d),p)$ is a contraction mapping. By (H$_3$), it is clear that \[(\Gamma_1,\ldots,\Gamma_d)=\Gamma\geq[\mathcal{K}^h(u)](\omega)\geq\delta=(\delta_1,\ldots,\delta_d)\quad {\rm for\ all}\ \omega\in\Omega\ {\rm and}\ u\in{\rm int}\mathcal{L}^\infty_+(\Omega;\mathbb{R}^d).\]
That is, $\mathcal{K}^h$ is well defined from ${\rm int}\mathcal{L}^\infty_+(\Omega;\mathbb{R}^d)$ into itself.
In what follows, we will denote by $H$ the mapping
\[H(u)=\left[h_i\left(\int_{-\infty}^0\Phi(-s,\theta_s\omega)u(\theta_s\omega)ds\right)\right]_{i=1}^d,\quad
u\in{\rm int}\mathcal{L}^\infty_+(\Omega;\mathbb{R}^d),\]
and let $H^{-1}(u)=\left[\frac{1}{H_i(u)}\right]_{i=1}^d$ for all $u\in{\rm int}\mathcal{L}^\infty_+(\Omega;\mathbb{R}^d)$.
Given any $u,v\in{\rm int}\mathcal{L}^\infty_+(\Omega;\mathbb{R}^d)$, if $h_T$ is sublinear, then there exists a constant $0\leq L_1=L_1(\delta,\Gamma,T)<1$ such that
\begin{eqnarray}\label{eq23a}
p\left(H(u),H(v)\right)&=&p\left(\frac\delta T+H(u)-\frac\delta T,\frac\delta T+H(v)-\frac\delta T\right)\nonumber\\
&\leq&L_1p\left(H(u)-\frac\delta T,H(v)-\frac\delta T\right),
\end{eqnarray}
see \cite[Lemma 5.2]{FS3} or \cite[Theorem 2.6, p. 59]{N}.
Combining the definition of $H$ and the sublinearity of $h_T$, it is clear that
$[H-\frac\delta T](u)=H(u)-\frac\delta T:{\rm int}\mathcal{L}^\infty_+(\Omega;\mathbb{R}^d)\mapsto{\rm int}\mathcal{L}^\infty_+(\Omega;\mathbb{R}^d)$ is also sublinear. That is, $H-\frac\delta T$ is nonexpansive with
respect to the Birkhoff metric $p$. By (\ref{eq22}) and (\ref{eq23a}), it follows that
\begin{eqnarray}
p(\mathcal{K}^h(u),\mathcal{K}^h(v))&=&p(H(u),H(v))\nonumber\\
&\leq&L_1p\left(H(u)-\frac\delta T,H(v)-\frac\delta T\right)\nonumber\\
&\leq&L_1p\left(u,v\right)\nonumber
\end{eqnarray}
for all $u,v\in{\rm int}\mathcal{L}^\infty_+(\Omega;\mathbb{R}^d)$.
On the other hand, while $h_S^{-1}$ is sublinear, then there exists a constant $0\leq L_2=L_2(\delta,\Gamma,S)<1$ such that
\begin{eqnarray}\label{eq23b}
p\left(H(u),H(v)\right)&=&p\left(H^{-1}(u),H^{-1}(v)\right)\nonumber\\
&=&p\left(\frac1S\Gamma^{-1}+H^{-1}(u)-\frac1S\Gamma^{-1},\frac1S\Gamma^{-1}+H^{-1}(v)-\frac1S\Gamma^{-1}\right)\nonumber\\
&\leq&L_2p\left(H^{-1}(u)-\frac1S\Gamma^{-1},H^{-1}(v)-\frac1S\Gamma^{-1}\right),
\end{eqnarray}
where $\Gamma^{-1}=(\frac{1}{\Gamma_1},\ldots,\frac{1}{\Gamma_d})$, see \cite[Lemma 5.2]{FS3} or \cite[Theorem 2.6, p. 59]{N}.
Since $h_S^{-1}$ is sublinear, this guarantees the sublinearity of
$[H^{-1}-\frac1S\Gamma^{-1}](u)=H^{-1}(u)-\frac1S\Gamma^{-1}:{\rm int}\mathcal{L}^\infty_+(\Omega;\mathbb{R}^d)\mapsto{\rm int}\mathcal{L}^\infty_+(\Omega;\mathbb{R}^d)$, i.e., $H^{-1}-\frac1S\Gamma^{-1}$ is nonexpansive with
respect to the Birkhoff metric $p$. From (\ref{eq22}) and (\ref{eq23b}), it is easily seen that
\begin{eqnarray}
p(\mathcal{K}^h(u),\mathcal{K}^h(v))&=&p(H(u),H(v))\nonumber\\
&\leq&L_2p\left(H^{-1}(u)-\frac1S\Gamma^{-1},H^{-1}(v)-\frac1S\Gamma^{-1}\right)\nonumber\\
&\leq&L_2p\left(u,v\right)\nonumber
\end{eqnarray}
for all $u,v\in{\rm int}\mathcal{L}^\infty_+(\Omega;\mathbb{R}^d)$.
The proof is complete.
\qquad\end{proof}

\begin{theorem}[{\rm Small-gain Theorem II}]\label{thm2}
Let assumptions {\rm (A)}, {\rm (L)}, {\rm (H$_1$)} and {\rm (H$_3$)} hold. Then the gain operator $\mathcal{K}^h$
admits a unique fixed point $u\in{\rm int}\mathcal{L}^\infty_+(\Omega;\mathbb{R}^d)$
such that for all $x\in\mathbb{R}^d_+$
\begin{equation}
\label{eq24}\lim_{t\rightarrow\infty}\varphi(t,\theta_{-t}\omega)x=[\mathcal{K}(u)](\omega)\quad \mathbb{P}\mbox{-a.s.}
\end{equation}
Moreover, we have $\varphi(t,\omega)[\mathcal{K}(u)](\omega)=[\mathcal{K}(u)](\theta_t\omega)$, $\mathbb{P}$-a.s., $t>0$, i.e., $[\mathcal{K}(u)](\cdot)$ is a globally stable strongly positive random equilibrium.
\end{theorem}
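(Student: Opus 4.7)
The plan is to mirror the structure of the proof of Theorem \ref{thm1}, but replacing the $\mathcal{L}^1$--contraction argument with the Birkhoff--metric contraction given in Lemma \ref{lem6}. First I would apply Lemma \ref{lem6}: since $({\rm int}\mathcal{L}^\infty_+(\Omega;\mathbb{R}^d),p)$ is complete and $\mathcal{K}^h$ is a contraction on it with some factor $L\in[0,1)$, the Banach fixed point theorem yields a unique $u\in{\rm int}\mathcal{L}^\infty_+(\Omega;\mathbb{R}^d)$ with $\mathcal{K}^h(u)=u$, and moreover $p((\mathcal{K}^h)^{k}(w),u)\to 0$ for every starting point $w\in{\rm int}\mathcal{L}^\infty_+(\Omega;\mathbb{R}^d)$.

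The candidate starting points are $\xi_\tau^h$ and $\eta_\tau^h$ from Proposition \ref{pro4}. Because {\rm (H$_3$)} gives $\delta\leq h(x)\leq\Gamma$ on $\mathbb{R}_+^d$, we have $\delta\leq\xi_\tau^h\leq\eta_\tau^h\leq\Gamma$ uniformly in $\omega$, so both lie in ${\rm int}\mathcal{L}^\infty_+(\Omega;\mathbb{R}^d)$. Lemma \ref{lem4} (invoked with $2k$ iterates so that both the monotone and anti-monotone cases are covered simultaneously) gives
\[
(\mathcal{K}^h)^{2k}(\xi_\tau^h)\ \leq\ \theta-\underline{\lim}\;h(\varphi)\ \leq\ \theta-\overline{\lim}\;h(\varphi)\ \leq\ (\mathcal{K}^h)^{2k}(\eta_\tau^h)
\]
for every $k\in\mathbb{N}$ and all $\omega\in\Omega$. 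Convergence in the Birkhoff metric on the normal cone $\mathcal{L}^\infty_+(\Omega;\mathbb{R}^d)$ forces $\|\cdot\|_\infty$--convergence, and $\|f_k-u\|_\infty\to 0$ yields $f_k(\omega)\to u(\omega)$ for $\mathbb{P}$-a.e.\ $\omega$. Letting $k\to\infty$ in the sandwich collapses the two bounds to $u$, so $\theta-\underline{\lim}\;h(\varphi)=\theta-\overline{\lim}\;h(\varphi)=u$ $\mathbb{P}$-a.s. Plugging this into Lemma \ref{lem2} gives $\mathcal{K}(u)\leq\theta-\underline{\lim}\;\varphi\leq\theta-\overline{\lim}\;\varphi\leq\mathcal{K}(u)$, hence
\[
\lim_{t\to\infty}\varphi(t,\theta_{-t}\omega)x\ =\ [\mathcal{K}(u)](\omega)\quad\mathbb{P}\mbox{-a.s.}
\]
for every $x\in\mathbb{R}_+^d$, proving \eqref{eq24}.

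For the random-equilibrium property, I would fix $t>0$, pick $x\in\mathbb{R}_+^d$, and use the cocycle identity $\varphi(t,\omega)\varphi(n,\theta_{-n}\omega)x=\varphi(t+n,\theta_{-n}\omega)x=\varphi(t+n,\theta_{-(t+n)}\theta_t\omega)x$. Passing $n\to\infty$, the right-hand side tends to $[\mathcal{K}(u)](\theta_t\omega)$ $\mathbb{P}$-a.s.\ by \eqref{eq24}, while continuity of $\varphi(t,\omega,\cdot)$ makes the left-hand side tend to $\varphi(t,\omega)[\mathcal{K}(u)](\omega)$, establishing $\varphi(t,\omega)[\mathcal{K}(u)](\omega)=[\mathcal{K}(u)](\theta_t\omega)$. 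Strong positivity of $\mathcal{K}(u)$ follows because $u\geq\delta\gg 0$ and the fundamental matrix $\Phi(-s,\theta_s\omega)$ of the cooperative system (\ref{eq2}) has strictly positive diagonal and nonnegative off-diagonal entries, so integration over $(-\infty,0]$ produces an element of ${\rm int}\mathbb{R}_+^d$.

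The main obstacle I expect is the bridge between convergence in the Birkhoff metric $p$ on ${\rm int}\mathcal{L}^\infty_+$ and pointwise $\mathbb{P}$-a.s.\ convergence needed to take limits in the sandwich inequality at each $\omega$; this relies on normality of $\mathcal{L}^\infty_+(\Omega;\mathbb{R}^d)$ (for the implication $p$-convergence $\Rightarrow$ $\|\cdot\|_\infty$-convergence) together with the fact that essential-sup convergence implies almost-sure convergence. A secondary technical point is to verify cleanly that $\xi_\tau^h,\eta_\tau^h\in{\rm int}\mathcal{L}^\infty_+(\Omega;\mathbb{R}^d)$ (uniform lower bound by $\delta$, upper bound by $\Gamma$, and $\mathscr{F}_-$-measurability from Proposition \ref{pro4}) so that Lemma \ref{lem6} may legitimately be applied to them.
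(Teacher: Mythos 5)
Your proposal is correct and follows essentially the same route as the paper's proof: Lemma \ref{lem6} plus the Banach fixed point theorem in the part metric, the sandwich from Lemma \ref{lem4} with $\xi_\tau^h,\eta_\tau^h\in{\rm int}\mathcal{L}^\infty_+(\Omega;\mathbb{R}^d)$, the bridge from $p$-convergence to $\|\cdot\|_\infty$- and hence $\mathbb{P}$-a.s.\ convergence (the paper makes this explicit via the inequality $\|x-y\|_\infty\leq(2e^{p(x,y)}-e^{-p(x,y)}-1)\min\{\|x\|_\infty,\|y\|_\infty\}$ from Remark 3.1.1 in \cite{C}), and then the collapse through Lemma \ref{lem2} and the cocycle argument for the equilibrium property. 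The obstacles you flag are exactly the ones the paper addresses, so no gap remains.
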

\begin{proof}
Fix $\tau\geq0$. Without loss of generality, by Lemma \ref{lem4}, it is easy to see that
\begin{equation}\label{eq25}[\mathcal{K}^h]^{2k}(\xi_\tau^h)\leq\theta-\underline{\lim}\;h(\varphi)\leq \theta-\overline{\lim}\;h(\varphi)\leq[\mathcal{K}^h]^{2k}(\eta_\tau^h)\quad {\rm for\ all}\ \omega\in\Omega\ {\rm and}\ k\in\mathbb{N}.
\end{equation}
Observe that $h:\mathbb{R}_+^d\rightarrow[\delta,\Gamma]$ is uniformly bounded away from zero.
This implies that $\xi_\tau^h$ and $\eta_\tau^h$ both belong to ${\rm int}\mathcal{L}^\infty_+(\Omega;\mathbb{R}^d)$.
Using the Banach fixed point theorem \cite{Y}, by Lemma \ref{lem6}, there exists a unique globally attracting fixed point $u\in{\rm int}\mathcal{L}^\infty_+(\Omega;\mathbb{R}^d)$ such that
\[[\mathcal{K}^h(u)](\omega)=u(\omega)\quad \mathbb{P}\mbox{-a.s.}\]
and
\begin{equation}\label{eq26}\lim_{k\rightarrow\infty}p\left([\mathcal{K}^h]^{2k}(\xi_\tau^h),u\right)
=\lim_{k\rightarrow\infty}p\left([\mathcal{K}^h]^{2k}(\eta_\tau^h),u\right)=0.
\end{equation}
It is obvious that the norm $\|\cdot\|_\infty$ is monotone, i.e., $0\leq x\leq y$ implies that $\|x\|_\infty\leq\|y\|_\infty$.
Consequently, by Remark 3.1.1 in \cite{C}, we have
\begin{eqnarray}
\|[\mathcal{K}^h]^{2k}(\xi_\tau^h)-u\|_\infty
&\leq&\left(2e^{p\left([\mathcal{K}^h]^{2k}(\xi_\tau^h),u\right)}-e^{-p\left([\mathcal{K}^h]^{2k}(\xi_\tau^h),u\right)}-1\right)\nonumber\\
&\quad&\times\min\{\|[\mathcal{K}^h]^{2k}(\xi_\tau^h)\|_\infty,\|u\|_\infty\}\nonumber\\
&\leq&\left(2e^{p\left([\mathcal{K}^h]^{2k}(\xi_\tau^h),u\right)}-e^{-p\left([\mathcal{K}^h]^{2k}(\xi_\tau^h),u\right)}-1\right)
\cdot\|u\|_\infty\nonumber\\
&\rightarrow&0,\quad {\rm as}\quad k\rightarrow\infty,\nonumber
\end{eqnarray}
which implies that
\begin{equation}\label{eq27}\lim_{k\rightarrow\infty}[(\mathcal{K}^h)^{2k}(\xi_\tau^h)](\omega)
=u(\omega)\quad
\mathbb{P}\mbox{-a.s.}
\end{equation}
Applying the same argument, then
\begin{equation}\label{eq28}\lim_{k\rightarrow\infty}[(\mathcal{K}^h)^{2k}(\eta_\tau^h)](\omega)
=u(\omega)\quad
\mathbb{P}\mbox{-a.s.}
\end{equation}
The remainder of the proof can be handled as that of Theorem 4.2 in \cite{JL}. Furthermore, by the fact that $h$ is uniformly bounded away from zero and Remark 2, it is clear that $\mathcal{K}(u)$ is a strongly positive random equilibrium. The proof is complete.
\qquad\end{proof}

Let us now illustrate Theorem \ref{thm2} by discussing a few examples. In what follows, we will explain that our main results can be
applied to stochastic cooperative, competitive and predator-prey systems with multiplicative noise, and other non-monotone stochastic systems.
For the sake of convenience, we only present some 3-dimensional stochastic systems here.

{\bf Example 4.5.} Firstly, we consider the three-dimensional {\it stochastic cooperative system}
\begin{equation}\label{EX4.3.1}
dX_t=[AX_t+h(X_t)]dt+\sum_{i=1}^{3}G_iX_tdW_t^i\end{equation}
where
\[
A=\left[\begin{array}{ccc} -1&1&0\\
 \frac13&\frac12&0\\
0&1&-\frac13\end{array}\right],\
G_1=\left[\begin{array}{ccc} \frac32&0&0\\
 0&2&0\\
0&0&2\end{array}\right],
\
G_2=\left[\begin{array}{ccc} -3&0&0\\
 0&-2&0\\
0&0&-2\end{array}\right],
\
G_3=2I_{3\times3},
\]
and
\begin{equation}\label{EX4.3.2}h_i(x):=2+g_i(x_i),\qquad x\in\mathbb{R}_+^3,\ i=1,2,3,\end{equation}
where $g_i(x_i)=\frac{x_i}{1+x_i}$ is a $C^1$-increasing sublinear function from $\mathbb{R}_+$ to $\mathbb{R}_+$, $i=1,2,3$.
It is clear that (\ref{EX4.3.1}) is a stochastic cooperative system.
Let $\delta=(2,2,2)$ and $\Gamma=(3,3,3)$. It is easy to check that $h:\mathbb{R}_+^3\rightarrow[\delta,\Gamma]$ is an order-preserving and bounded function. Moreover, choose $T=2$, it is clear that $h_T(x)=h(x)-\frac12\delta=\frac12\delta+\tilde g(x)$ is sublinear, where $\tilde g_i(x)=g_i(x_i)$, $i=1,2,3$.
In order to use Theorem \ref{thm2}, we need to prove that the top Lyapunov exponent is negative. By Theorem 2.4.4 in \cite{C}, it follows that for any $x\in\mathbb{R}^d\setminus\{0\}$, there exists the Lyapunov exponent
\begin{equation}\label{EX4.3.3}
\lambda(x):=\lim_{t\rightarrow+\infty}\frac1t\log|\Phi(t,\omega)x|\quad{\rm for\ all}\ \omega\in\Omega^\ast,
\end{equation}
where $\Omega^\ast$ is a $\theta$-invariant set of full measure. In fact, we can choose the indistinguishable from of $\Phi(t,\omega)$ and extend the existence of Lyapunov exponents to the whole $\Omega$, see Remark 1.2.1 in \cite{C}. Moreover, it is known that $\lambda:=\max_{x\in\mathbb{R}^d\setminus\{0\}}\lambda(x)$
is the top Lyapunov exponent, see Theorem 2.4.4 and Definition 1.9.1 in \cite{C}. Therefore, in order to prove (L), it suffices to show that there exists a constant $L_\lambda>0$ such that
\begin{equation}\label{EX4.3.4}\limsup_{t\rightarrow+\infty}\frac1t\log|\Phi(t,\omega)x|\leq-L_\lambda\quad\mathbb{P}\mbox{-a.s.}\end{equation}
for all $x\in\mathbb{R}^d\setminus\{0\}$. Let us discuss the corresponding linear homogeneous stochastic It${\rm  \hat{o}}$ equations of (\ref{EX4.3.1})
\[dX_t=AX_tdt+\sum_{i=1}^{3}G_iX_tdW_t^i.\]
Hence,
\[|Ax|_2\leq\|A\|_2|x|_2=\sqrt{\frac{125}{36}}|x|_2\leq2|x|_2,\quad x\in\mathbb{R}^d,\]
\[\sum_{i=1}^{3}|G_ix|_2^2=\frac{61}{4}x_1^2+12x_2^2+12x_3^2\leq\frac{61}{4}|x|_2^2,\quad x\in\mathbb{R}^d,\]
and
\begin{eqnarray}
\sum_{i=1}^{3}|x^TG_ix|^2
&=&(\frac32x_1^2+2x_2^2+2x_3^2)^2+(-3x_1^2-2x_2^2-2x_3^2)^2+(2x_1^2+2x_2^2+2x_3^2)^2\nonumber\\
&\geq&\frac94|x|_2^4+4|x|_2^4+4|x|_2^4\nonumber\\
&=&\frac{41}{4}|x|_2^4,\quad x\in\mathbb{R}^d.
\nonumber
\end{eqnarray}
Then by Theorem 5.1 in \cite[Chapter 4]{M}, it follows easily that
\[\limsup_{t\rightarrow+\infty}\frac1t\log|\Phi(t,\omega)x|\leq-(\frac{41}{4}-2-\frac{61}{8})=-\frac58\quad\mathbb{P}\mbox{-a.s.}\]
which implies that (\ref{EX4.3.4}) holds. By the Small-gain Theorem II, stochastic cooperative system (\ref{EX4.3.1}) admits a unique
globally attracting strongly positive random equilibrium for all pull-back trajectories.

The same conclusion can be obtained if we replace $g_i(x_i)$ by $g_i(x_1+x_2+x_3)$ or let $h_i(x)=\frac{1}{2+g_i(x_i)}$, where $g_i(x_i)=\frac{1}{1+x_i}$, $i=1,2,3$.

{\bf Example 4.6.} Next, we shall study the three-dimensional {\it stochastic competitive system}
\begin{equation}\label{EX4.4.1}
dX_t=[AX_t+h(X_t)]dt+\sum_{i=1}^{3}G_iX_tdW_t^i\end{equation}
where
\[
A=\left[\begin{array}{ccc} -1&0&0\\
 0&\frac12&0\\
0&0&1\end{array}\right],\
G_1=\left[\begin{array}{ccc} 1&0&0\\
 0&\frac32&0\\
0&0&1\end{array}\right],
\
G_2=-2I_{3\times3},\
G_3=\left[\begin{array}{ccc} -\frac12&0&0\\
 0&\frac14&0\\
0&0&\frac13\end{array}\right],
\]
and
\begin{equation}\label{EX4.4.2}h_i(x):=\frac{1}{1+g_i(x_{i-1})},\qquad x\in\mathbb{R}_+^3,\ i=1,2,3,\end{equation}
where $g_i(x_{i-1})=\frac{x_{i-1}}{1+x_{i-1}}$ is a $C^1$-increasing sublinear function from $\mathbb{R}_+$ to $\mathbb{R}_+$$(x_0=x_3)$, $i=1,2,3$. It is a simple matter to see that $h:\mathbb{R}_+^3\rightarrow{\rm int}\mathbb{R}_+^3$ is an anti-order-preserving and bounded function, which yields that (\ref{EX4.4.1}) is a stochastic competitive system. Furthermore, it is easily seen that $h:\mathbb{R}_+^3\rightarrow[\delta,\Gamma]$, where $\delta=(\frac12,\frac12,\frac12)$ and $\Gamma=(1,1,1)$. Let $S=2$, it follows that
$h_S^{-1}(x)=\Gamma+\tilde g(x)-\frac12\Gamma=\frac12\Gamma+\tilde g(x)$ is sublinear, where $\tilde g_i(x)=g_i(x_{i-1})$, $i=1,2,3$.
For the purpose of using the Small-gain Theorem II, it remains to verify (L). As the analysis in Example 4.5, we are now in a position to show that
\begin{equation}\label{EX4.4.3}\limsup_{t\rightarrow+\infty}\frac1t\log|\Phi(t,\omega)x|\leq-L_\lambda\quad\mathbb{P}\mbox{-a.s.}\end{equation}
where $L_\lambda>0$ is independent of $\omega\in\Omega$ and $x\in\mathbb{R}^d\setminus\{0\}$. A simple computation gives that
\[|Ax|_2\leq\|A\|_2|x|_2=\sqrt{\frac94}|x|_2=\frac32|x|_2,\quad x\in\mathbb{R}^d,\]
\[\sum_{i=1}^{3}|G_ix|_2^2=\frac{21}{4}x_1^2+\frac{101}{16}x_2^2+\frac{46}{9}x_3^2\leq\frac{101}{16}|x|_2^2,\quad x\in\mathbb{R}^d,\]
and
\begin{eqnarray}
\sum_{i=1}^{3}|x^TG_ix|^2
&=&(x_1^2+\frac32x_2^2+x_3^2)^2+(-2x_1^2-2x_2^2-2x_3^2)^2+(-\frac12x_1^2+\frac14x_2^2+\frac13x_3^2)^2\nonumber\\
&\geq&|x|_2^4+4|x|_2^4\nonumber\\
&=&5|x|_2^4,\quad x\in\mathbb{R}^d.
\nonumber
\end{eqnarray}
This implies that
\[\limsup_{t\rightarrow+\infty}\frac1t\log|\Phi(t,\omega)x|\leq-(5-\frac{101}{32}-\frac32)=-\frac{11}{32}\quad\mathbb{P}\mbox{-a.s.}\]
by Theorem 5.1 in \cite[Chapter 4]{M}. Applying the Small-gain Theorem II, we conclude that there exists a unique globally stable strongly positive random equilibrium for stochastic competitive system (\ref{EX4.4.1}).

The same conclusion can be obtained if we replace $g_i(x_{i-1})$ by $g_i(x_1+x_2+x_3)$ or let $h_i(x)=2+\frac{1}{1+x_{i-1}^m}$, where $m>1$, $i=1,2,3$.

{\bf Example 4.7.} Finally, we investigate the three-dimensional {\it stochastic predator-prey system}
\begin{equation}\label{EX4.5.1}
dX_t=[AX_t+h(X_t)]dt+\sum_{i=1}^{3}G_iX_tdW_t^i\end{equation}
where
\[
A=\left[\begin{array}{ccc} \frac12&0&1\\
 1&-\frac13&0\\
0&1&\frac14\end{array}\right],\
G_1=3I_{3\times3},\
G_2=\left[\begin{array}{ccc} \frac32&0&0\\
 0&\frac54&0\\
0&0&1\end{array}\right],
\
G_3=\left[\begin{array}{ccc} -\frac52&0&0\\
 0&-3&0\\
0&0&-2\end{array}\right],
\]
and
\begin{equation}\label{EX4.5.2}h_i(x):=\frac{1}{3+g_i(x_{i+1})},\qquad x\in\mathbb{R}_+^3,\ i=1,2,3,\end{equation}
where $g_i(x_{i+1})=\frac{1+x_{i+1}}{2+x_{i+1}}$ is a $C^1$-increasing sublinear function from $\mathbb{R}_+$ to $\mathbb{R}_+$$(x_4=x_1)$, $i=1,2,3$.
Write $f(x)=Ax+h(x)$, $x\in\mathbb{R}_+^3$. Then $\frac{\partial f_i}{\partial x_{i-1}}(x)=1>0$ and $\frac{\partial f_{i}}{\partial x_{i+1}}(x)=-\frac{1}{(3+g_i(x_{i+1}))^2}\cdot\frac{1}{(2+x_{i+1})^2}<0$, for $i=1,2,3$ $(x_0=x_3, x_4=x_1)$. This implies that (\ref{EX4.5.1}) is a stochastic predator-prey system. Consider $\delta=(\frac14,\frac14,\frac14)$ and $\Gamma=(\frac13,\frac13,\frac13)$.
It is obvious that $h:\mathbb{R}_+^3\rightarrow[\delta,\Gamma]$ is an anti-order-preserving and bounded function.
Set $S=2$, it is evident that
$h_S^{-1}(x)=\Gamma^{-1}+\tilde g(x)-\frac12\Gamma^{-1}=\frac12\Gamma^{-1}+\tilde g(x)$ is sublinear, where $\Gamma^{-1}=(3,3,3)$ and $\tilde g_i(x)=g_i(x_{i+1})$, $i=1,2,3$.
Furthermore, we can see that
\[|Ax|_2\leq\|A\|_2|x|_2=\sqrt{\frac{493}{144}}|x|_2\leq2|x|_2,\quad x\in\mathbb{R}^d,\]
\[\sum_{i=1}^{3}|G_ix|_2^2=\frac{35}{2}x_1^2+\frac{313}{16}x_2^2+14x_3^2\leq\frac{313}{16}|x|_2^2,\quad x\in\mathbb{R}^d,\]
and
\begin{eqnarray}
\sum_{i=1}^{3}|x^TG_ix|^2
&=&(3x_1^2+3x_2^2+3x_3^2)^2+(\frac32x_1^2+\frac54x_2^2+x_3^2)^2+(-\frac52x_1^2-3x_2^2-2x_3^2)^2\nonumber\\
&\geq&9|x|_2^4+|x|_2^4+4|x|_2^4\nonumber\\
&=&14|x|_2^4,\quad x\in\mathbb{R}^d,
\nonumber
\end{eqnarray}
which together with Theorem 5.1 in \cite[Chapter 4]{M} implies that
\[\limsup_{t\rightarrow+\infty}\frac1t\log|\Phi(t,\omega)x|\leq-(14-\frac{313}{32}-2)=-\frac{71}{32}\quad\mathbb{P}\mbox{-a.s.}\]
In the same way as above, we have that (L) holds. In view of the Small-gain Theorem II, the stochastic predator-prey system (\ref{EX4.5.1}) admits a unique strongly positive random equilibrium which attracts all pull-back trajectories.

Furthermore, if we define $h_i(x)=3+\frac{1}{1+x_{i+1}^m}$, where $m>1$, $i=1,2,3$, the same result still holds.

\section{Discussion}
In this paper, we have considered the stochastic stability of nonlinear stochastic control system with inputs and outputs driven by multiplicative white noise and established two small-gain theorems in the two cases that output functions either admit bounded derivatives or are uniformly bounded away from the original. That is, there exists a unique globally attracting nonnegative random equilibrium $\mathcal{K}(u)$ for the RDS generated by those stochastic feedback systems, such as stochastic Goodwin negative feedback system,  Othmer-Tyson positive feedback system, Griffith positive feedback system and so on. Motivated by the idea in \cite{JL}, the key point in this paper is to construct a suitable complete metric space as the input space such that the gain operator $\mathcal{K}^h$ is contractive on it. In order to do this, in the case that derivatives of output functions are bounded, the joint measurability of the metric dynamical system $\theta:\mathbb{R_-}\times\Omega\mapsto\Omega$ with respect to the product $\sigma$-algebra $\mathscr{B}(\mathbb{R_-})\otimes\mathscr{F}_-$ is first established, see Proposition \ref{pro31}. This helps us to successfully receive the $\mathscr{F}_-$-measurability for the input-to-state characteristic operator $\mathcal{K}(u)$ while $u$ is tempered and $\mathscr{F}_-$-measurable, see Proposition \ref{pro3}. It is just because these measurabilities are obtained that the gain operator $\mathcal{K}^h:\mathcal{L}_{\mathscr{F_-}}^1\rightarrow\mathcal{L}_{\mathscr{F_-}}^1$ is well defined. Combining the small-gain condition {\rm (H$_2$)} and the independence between $R$ and the past $\sigma$-algebra $\mathscr{F}_-$, we finally proved that the gain operator $\mathcal{K}^h$ is a contraction mapping on the input space $\mathcal{L}_{\mathscr{F_-}}^1$. Here, the choice of the input space seems to be the best, since any globally attracting random equilibrium $v(\omega)$, i.e., $\lim_{t\rightarrow\infty}\varphi(t,\theta_{-t}\omega)x=v(\omega)$, must be $\mathscr{F}_-$-measurable.

In the use of the Small-gain Theorem I, the most important task
is to verify the small-gain condition (H$_2$) in Lemma \ref{lem5}. For this purpose, we should give suitable estimation of the upper bound for
$\frac{\|R\|_{\mathcal{L}^1}}{\lambda}$, where the positive constant $\lambda$ and the tempered random variable $R$ are defined in the condition (L). In fact, it is interesting and difficult to get the optimal upper bound of $\frac{\|R\|_{\mathcal{L}^1}}{\lambda}$ for high-dimensional stochastic control systems. Even if the exact expression of the solution is given, this is not an easy issue. For example, in the model of stochastic single loop feedback system (\ref{EX4.1.0}), our bound such as (\ref{EX4.1.10}) is conservative.

If $n=1$, then the optimal bound of $\frac{\|R\|_{\mathcal{L}^1}}{\lambda}$ can be calculated. That is, we study the following scalar SDE
\[dx=-\alpha xdt+\sigma xdW_t,\]
where $\alpha>0$ and $\sigma\neq0$. It is well known that $\Phi(t,\omega)=e^{(-\alpha-\frac{\sigma^2}{2})t+\sigma W_t(\omega)}$, $t\geq0$ and $\omega\in\Omega$.
Consequently, in order to verify the small-gain condition, we can let $0<\lambda<\alpha$ and $R(\omega)=\sup_{t\geq0}\exp\left[-(\alpha-\lambda+\frac{\sigma^2}{2})t+\sigma W_t(\omega)\right]$, $\omega\in\Omega$. This implies that
$\frac{\|R\|_{\mathcal{L}^1}}{\lambda}=\frac1\lambda+\frac{\sigma^2}{2\lambda(\alpha-\lambda)}$ and $\min_{0<\lambda<\alpha}\frac{\|R\|_{\mathcal{L}^1}}{\lambda}
=\frac{1}{\lambda_0}+\frac{\sigma^2}{2\lambda_0(\alpha-\lambda_0)}$, where $\lambda_0=\frac{(2\alpha+\sigma^2)-\sigma\sqrt{2\alpha+\sigma^2}}{2}$. Then, the small-gain condition (H$_2$) ($n=1$) can be interpreted as
$M\cdot\left[\frac{1}{\lambda_0}+\frac{\sigma^2}{2\lambda_0(\alpha-\lambda_0)}\right]<1$. However, in general ($n\geq2$), we have no good idea to get this best estimation. This will be left for future consideration. To our knowledge, our new theory provides some new insights to investigate the stochastic stability of stochastic non-monotone control systems.

\end{document}